\newtheorem{theorem}{Theorem}[section]
\newtheorem{definition}[theorem]{Definition}
\newtheorem{lemma}[theorem]{Lemma}
\newtheorem{proposition}[theorem]{Proposition}
\theoremstyle{remark}
\newtheorem{remark}[theorem]{Remark}
\def\d{{\rm d}}
\def \l {\langle}
\def \r {\rangle}
\def\V{{\mathbf{V}}}
\def\H{\mathbf{H}}
\def\W{\mathbf{W}}
\def\A{\mathbf{A}}
\def\f{\textbf{\textit{f}}}
\def\uu{\textbf{\textit{u}}}
\def\vv{\textbf{\textit{v}}}
\def\ww{\textbf{\textit{w}}}
\def\g{\textbf{\textit{g}}}
\def\ddt{\frac{\d}{\d t}}
\def\C{\mathcal{C}}
\def\vp{\varphi}
\def\n{\textbf{n}}
\def\E{\mathcal E}
\def\la{\lambda}
\def \au {\rm}
\def \ti {\it}
\def \jou {\rm}
\def \bk {\it}
\def \no#1#2#3 {{\bf #1} (#3), #2.}
\def \eds#1#2#3 {#1, #2, #3.}
\begin{document}

\title[Uniqueness and regularity for the NSCH system]
{Uniqueness and regularity for \\ the
Navier-Stokes-Cahn-Hilliard system}
\author[Giorgini,  Miranville \& Temam]{Andrea Giorgini$^1$, Alain Miranville$^{2,3,4}$ 
\& Roger Temam$^5$}

\address{$^1$Indiana University, Department of Mathematics \& Institute for Scientific Computing and Applied Mathematics, Bloomington, IN 47405, USA}
\email{agiorgin@iu.edu}

\address{$^2$Laboratoire de Math\'{e}matiques et Applications, Universit\'{e} de Poitiers, UMR CNRS 7348, Boulevard Marie et
Pierre Curie - T\'{e}l\'{e}port 2, F-86962, Chasseneuil Futuroscope Cedex, France}
\email{alain.miranville@math.univ-poitiers.fr}

\address{$^3$Fudan University (Fudan Fellow), Shanghai, China}

\address{$^4$Xiamen University,
School of Mathematical Sciences,
Xiamen, Fujian, China}

\address{$^5$Indiana University,
Institute for Scientific Computing and Applied Mathematics, Bloomington, IN 47405, USA}
\email{temam@indiana.edu}

\subjclass[2010]{35Q35,35K61,76D03}

\keywords{Navier-Stokes equations, Cahn-Hilliard equation, Logarithmic potentials, Uniqueness, Strong solutions}

\begin{abstract}
\noindent  
The motion of two contiguous incompressible and viscous 
fluids is described within the diffuse interface theory by the so-called Model H. The system consists of the Navier-Stokes equations, 
which are coupled with the Cahn-Hilliard 
equation associated to the Ginzburg-Landau free energy with physically relevant logarithmic potential. 
This model is studied in bounded smooth domains in $\mathbb{R}^d$, $d=2$ and $d=3$, and is supplemented with a no-slip condition for the velocity, homogeneous Neumann boundary conditions for the order parameter and the chemical potential, and suitable initial conditions.
We study uniqueness and regularity of weak and strong solutions.
In a two-dimensional domain, we show the uniqueness of weak solutions and the existence and uniqueness of global strong solutions originating from an initial velocity  $\uu_0 \in \V_\sigma$, namely $\uu_0\in \mathbf{H}_0^1(\Omega)$ such that $\mathrm{div}\, \uu_0=0$. In addition, we prove further regularity properties and the validity of the instantaneous separation property. In a three-dimensional domain we show the existence and uniqueness of local strong solutions with initial velocity $\uu_0 \in \V_\sigma$.
\end{abstract}

\date{\today}
\maketitle

\section{Introduction}
\label{1}
\noindent
In the diffuse interface theory, the motion of two incompressible and viscous fluids and the evolution of the interface that separates them are described by the Model H. The domain $\Omega$ of $\mathbb{R}^d$, $d=2$ or $d=3$ is filled with a mixture of two fluids with the same density; the concentrations of the fluids are $\vp_i$, $i=1,2$, where $\vp_i\in [0,1]$ and $\vp_1+\vp_2=1$. The physics of the Model H is such that the interface between the two fluids is assumed to be a narrow region with finite thickness. The concentrations are uniform (equal to $0$ or $1$) in subregions of $\Omega$, and vary steeply but continuously across the thin interface layer. This formulation allows large interface deformations and topological changes of the interfaces in the mixture.
After the seminal work \cite{HH} on critical points of single and binary fluids, a detailed derivation of the Model H was proposed
in \cite{GPV1996} and \cite{STARO} for the flow driven by capillarity forces. The model is based on the balance of mass and momentum which are combined with constitutive laws compatible with a version of the second law of thermodynamics.
Model H has been employed in several numerical studies for concrete applications. Relevant examples are interface stretching during mixing \cite{CV}, thermocapillary flows \cite{JV},
droplet formation and collision, moving contact lines and 
large-deformation flows \cite{J1,LS}. For a
review on these topics we refer the reader to \cite{AMW} and the references therein. Further generalizations of the Model H have
been discussed for fluid mixtures with different densities in \cite{AGG2012,ANTANOVSKII1995,B2001,DSS2007,LT1998}, and for contact angle problems and ternary fluids in \cite{BM2014,Kim2012} and the references therein.

Assuming that density differences are negligible, we 
consider two state variables: the volume-averaged fluid velocity $\uu=\uu(x,t)$ and the difference of the fluids concentrations (order parameter) $\vp=\vp(x,t)$, equal to $\vp_1-\vp_2$ in the notation above, where 
$x \in \Omega \subset \mathbb{R}^d$, $d=2$ or $d=3$, $\Omega$ being a bounded domain with smooth boundary $\partial \Omega$, and $t$ the time.
The evolution of the two state variables is governed by the Navier-Stokes-Cahn-Hilliard (NSCH) system, which reads in dimensionless form: 
\begin{equation}
\begin{cases}
\label{system}
\partial_t \uu+(\uu \cdot \nabla) \uu-\mathrm{div }\, (\nu(\vp)D\uu)
+\nabla \pi= \mu \nabla \vp,\\
\mathrm{div }\, \uu=0, \\
\partial_t \vp+ \uu \cdot \nabla \vp = \Delta \mu, \\
\mu= -\Delta \vp+ \Psi'(\vp),
\end{cases}
\quad \text{ in } \Omega\times (0,T),
\end{equation}
subject to the boundary and initial conditions
\begin{equation}
\label{bcic}
\begin{cases}
\uu =\mathbf{0},\quad \partial_\n \mu=\partial_\n \vp=0, \quad &\text{ on } \partial \Omega \times (0,T),\\
\uu(\cdot,0)=\uu_0,\quad \vp(\cdot,0)=\vp_0,\quad &\text{ in } \Omega.
\end{cases}
\end{equation}
Here $\n$ is the unit outward normal vector to the boundary $\partial \Omega$, $D\uu =  \frac12\big(\nabla \uu+ (\nabla \uu)^t\big)$ is the 
symmetric gradient, $\pi=\pi(x,t)$ is the pressure and
$\mu=\mu(x,t)$ is the so-called chemical potential. The potential $\Psi$ is the physically 
relevant homogeneous free energy density introduced in \cite{CH} and defined as
\begin{equation}
\label{SINGPOT}
\Psi(z)= \frac{\theta}{2} \Big((1+z)\log(1+z)+(1-z)\log(1-z)\Big)
-\frac{\theta_0}{2} z^2, \quad \forall z\in [-1,1],
\end{equation}
where $\theta$ and $\theta_0$ are related to the absolute temperature of the mixture 
and the critical temperature, respectively. 
These two constant parameters
satisfy the physical relations
$0<\theta<\theta_0$. 
This condition implies the double-well form to the potential \eqref{SINGPOT}.
The mathematical analysis of \eqref{system}-\eqref{bcic} may lead to a solution $\vp$ with arbitrary values in $\mathbb{R}$ whatever the potential $\Psi$, but we have to keep in mind that, by its very definition, $-1 \leq \vp\leq 1$ ($\pm1$ represent the pure concentrations) and we call these {\it physical} solutions. Now, assuming that $\nu_1$ and $\nu_2$ are the viscosities of the two homogeneous fluids, 
the viscosity of the mixture is modelled by the concentration dependent term $\nu=\nu(\vp)$. In the unmatched viscosity case ($\nu_1\neq \nu_2$),
a typical form for $\nu$ is the linear
combination (see, e.g., \cite{Kim2012} and Remark \ref{Remark-assump} below)
\begin{equation}
\label{VISFORM}
\nu(z)= \nu_1 \frac{1+z}{2}+ \nu_2 \frac{1-z}{2}, \quad \forall z\in [-1,1].
\end{equation}
The particular case $\nu_1=\nu_2$ is called matched viscosity case and $\nu$ is a positive constant. 

In the literature, the NSCH system has been widely studied by considering regular approximations of the logarithmic potential \eqref{SINGPOT}. Typical examples are polynomial-like functions, such as $\Psi_0(z)=\frac{\kappa}4 (z^2-\beta^2)^2$, where $\kappa>0$ is related to $\theta$ and $\theta_0$ and $\pm \beta$ are the two minima of $\Psi$. 
In the matched viscosity case, the mathematical analysis of
problem \eqref{system}-\eqref{bcic} with regular potentials is now well established, at least for classical boundary conditions. We refer  the reader to \cite{B,BBG,GG,GaG2bis,GaG3,Gibbon} (see also  \cite{BGM14,CaG,GM-jns} for the analysis of similar systems). 
In the unmatched viscosity case, the author in \cite{B} proved the global existence of weak solutions and the existence and uniqueness of strong solutions (global if $d=2$, local if $d=3$). Concerning the longtime behavior, the existence of the trajectory attractor is showed in \cite{GaG2bis}, while the convergence to equilibrium is established in \cite{ZWH} for periodic boundary conditions. However, in the case of polynomial potentials, it is worth recalling that it is not possible to guarantee the existence of {\it physical} solutions, that is solutions for which $-1\leq \vp(x,t)\leq 1$, for almost every $x\in \Omega$ and $t>0$.

On the other hand, few results are available for the original Model H with logarithmic potential \eqref{SINGPOT}. 
The NSCH system with unmatched viscosities and logarithmic potential has been only studied in \cite{A}, 
where existence of global weak ({\it physical}) solutions and existence and uniqueness of strong solutions (global if $d=2$, local if $d=3$) are shown (see \cite[Theorem 1 and 2]{A}). 
In particular, in two dimensions, assuming 
$\uu_0 \in \V_2^{1+r}(\Omega)$ 
for $r>0$, where 
$\V_2^{1+r}(\Omega)=(\V_\sigma,\W_\sigma)_{r,2}$ is an interpolation space, and $\V_\sigma$ and $\W_\sigma$ are defined below in Section \ref{2}, and assuming a natural 
higher-order condition on $\vp_0$ (cf. Theorem \ref{strong} below), the corresponding strong solution $(\uu,\vp)$ is global in time and unique.
In three dimensions, the local existence and uniqueness of strong solutions is achieved provided that the initial velocity $\uu_0$ belongs to $\V_2^{1+r}(\Omega)$ with $r>\frac12$.
The restriction on the initial velocity in $\V_2^{1+r}$ ($r>0$ if $d=2$ and $r>\frac12$ if $d=3$) is due to the uniqueness result \cite[Proposition 1]{A}, which requires 
that $\uu \in L^\infty(0,T;\mathbf{W}^{1,q}(\Omega))$, with $q>2$ if $d=2$ and $q=3$ if $d=3$, being not true for classical 
strong solutions of the Navier-Stokes equations for an initial velocity $\uu_0\in \V_\sigma$.
In addition, the author in \cite{A} shows that any weak solution is more regular on the interval $[T,\infty)$, for some $T>0$ which is not explicitly estimated. It satisfies the so-called {\it asymptotic} separation property (see\cite[Lemma 12]{A}), namely 
\begin{equation}
\label{asp}
\exists \, \delta>0, \, \exists \, T>0:\quad \| \vp(t)\|_{L^{\infty}(\Omega)}\leq 1-\delta, \quad \forall \, t \geq T. 
\end{equation}
This is a key property in order to show that any 
single trajectory converges to an equilibrium \cite[Theorem 3]{A}.
We also mention the results in \cite{ADT2014,GGM1,MT}, where the global existence of weak solutions to similar systems has been established. In \cite{ADT2014} the author considers a version of the NSCH system for non-Newtonian fluids, in \cite{GGM1} the authors study the NSCH system with boundary conditions that account for a moving contact line slip velocity, whereas in \cite{MT} the authors consider the Navier-Stokes-Cahn-Hilliard-Oono system. For the sake of completeness, we refer the interested reader to \cite{A2012,A2009,ADG2013,ADG2013-2,AF2008} for the analysis of the NSCH system with different densities.
Finally, we mention among many references \cite{BCB,BSSW,CS2016,CSW2013,CWWW2017,DXW2015,DWWW2017,
Fe,FHL,FW2012,GW,GZW2018,GLLW2017,HW2015,HHK,HW2014,J,KW2007,
KSW,KKL,LS,Sa,SY,SYY,SBW,
TLK,Liu2006,YFLS} 
for the numerical analysis, in particular stability and convergence analysis, numerical simulations and  control problems of the NSCH system.
At this stage we note that to date some important issues are still unsolved, such as the uniqueness of weak solutions of the NSCH in dimension two as well as the uniqueness of strong solutions with initial velocity in $\V_\sigma$ in both two and three dimensions. It is not even known whether such properties hold in the simpler case with matched viscosities. Besides, uniqueness of weak solutions in dimension two is an open question even for the NSCH system with regular potential and unmatched viscosities.

The aim of this work is to answer positively to the above mentioned open questions. 
Our main results for the NSCH system with unmatched viscosities are the following:
\begin{itemize}
\item[$1.$] If $d=2$, we show the uniqueness of weak ({\it physical}) solutions.
\item[$2.$] If $d=2$, we prove the global existence and uniqueness of strong solutions when $\uu_0\in \V_\sigma$.
\item[$3.$] If $d=2$, we show that any (weak or strong) solution becomes instantaneously more regular (that is on $[\tau,\infty)$ for any $\tau>0$), and it satisfies the {\it instantaneous} separation property, namely 
\begin{equation}
\label{isp}
\forall \, \tau>0, \ \exists \, \delta=\delta(\tau)>0:\quad \| \vp(t)\|_{L^{\infty}(\Omega)}\leq 1-\delta, \quad \forall \, t \geq \tau. 
\end{equation}
\item[$4.$] If $d=3$, we prove the local existence and uniqueness of strong solutions when $\uu_0 \in \V_\sigma$.
\end{itemize}
We observe that the technique here employed to prove the uniqueness of weak solutions in dimension two can be applied to show the same result for the following two cases: logarithmic potential and matched viscosities, and regular potentials and unmatched viscosities (see Remark \ref{uniq-rem1} and \ref{uniq-rem2}). It is worth mentioning that our method not only entails the uniqueness of weak solutions in dimension two, but a continuous dependence estimate on the initial data with a time-dependent exponent.

The mathematical analysis presented in this paper may be employed to investigate other diffuse interface models with logarithmic potential \eqref{SINGPOT}, also in connection with the study of optimal control problems and the analysis of numerical schemes. Among several models, we mention those systems that involve different laws for the velocity field, such as the Hele-Shaw and Brinkman approximations \cite{CG,GGW} or regularized family of the Navier-Stokes equations \cite{GM-jns} (see, also, \cite{CaG}). It would be interesting as well to analyze modified equations of the Cahn-Hilliard type \cite{BM2014,GGM,reviewCH,MT} or the Allen-Cahn equation (see, e.g., \cite{GG2}). A further important issue would be to extend the analysis to the non-isothermal version of the Model H introduced in \cite{ESR2015,ESR2016} and to the Model H with mass transfer and chemotaxis presented in \cite{LW2018}.
\smallskip

\textbf{Plan of the paper.} In Section \ref{2} we introduce the functions spaces, the main assumptions of the paper and we report a result of existence of weak solutions. In Section \ref{3} we discuss the uniqueness of weak solutions in two dimensions. Section \ref{4} is devoted to analysis of strong solutions, the instantaneous 
regularization of weak solutions and the separation property in space dimension two,.
Section \ref{6} is devoted to the study of strong solutions in space dimension three. 
We report in Appendixes \ref{Neumann-Laplace} and \ref{stokesappendix} some mathematical tools regarding the Neumann and Stokes problems.
\smallskip 
 
\section{Preliminaries}
\label{2}
\setcounter{equation}{0}

\subsection{Notation and Functions Spaces}
\noindent
Let $X$ be a (real) Banach or Hilbert space with norm denoted by $\| \cdot\|_X$. The boldface letter $\mathbf{X}$ stands for the vectorial space $X^d$ ($d$ is the spatial dimension), which consists of vector-valued functions $\uu$ with all components belonging to $X$, with norm $\| \cdot\|_{\mathbf{X}}$.
Let $\Omega$ be a bounded domain in $\mathbb{R}^d$, where $d=2$ or $d=3$, with smooth boundary $\partial \Omega$. 
We denote by $W^{k,p}(\Omega)$, $k\in \mathbb{N}$, the Sobolev space of
functions in $L^p(\Omega)$ with distributional derivatives of order less than or
equal to $k$ in $L^p(\Omega)$ and by $\| \cdot \|_{W^{k,p}(\Omega)}$ its norm.
For $k\in \mathbb{N}$, the Hilbert space $W^{k,2}(\Omega)$ is denoted 
by $H^k(\Omega)$ with norm $\|\cdot \|_{H^k(\Omega)}$.  
We denote by $H_0^1(\Omega)$ the closure of 
$\mathcal{C}_0^{\infty}(\Omega)$ in 
$H^1(\Omega)$ and by $H^{-1}(\Omega)$ its dual space.
We define $H=L^2(\Omega)$. Its inner product and norm are denoted by $( \cdot,\cdot )$ and $\| \cdot \|$,
respectively. We set $V=H^{1}(\Omega)$ with norm $\|\cdot \|_V$, and 
we denote its dual space by $V^{\prime}$ with norm $\|
\cdot \|_{V'}$. The symbol $\l \cdot, \cdot \r$ will stand
for the duality product between $V$ and $V'$.
We denote by $\overline{u}$ the average of $u$ over $\Omega$, that is $\overline{u}=|\Omega|^{-1}\l u,1\r$, for all $u\in V'$. By the generalized Poincar\'{e} inequality (see \cite[Chapter II, Section 1.4]{T}),
we recall that
$u \rightarrow (\| \nabla u\|^2+ |\overline{u}|^2)^\frac12$ is a norm on $V$ equivalent to the natural one.
We recall the following Gagliardo-Nirenberg and Agmon inequalities (see, e.g., \cite{Temam}) 
\begin{align}
\label{LADY}
&\| u\|_{L^4(\Omega)}\leq C \|u\|^{\frac12}\|u\|_V^{\frac12}, \quad &&\forall \, u \in V, \quad \text{if} \ d=2,\\
\label{LADY3}
&\| u\|_{L^3(\Omega)}\leq C \|u\|^{\frac12}\|u\|_V^{\frac12}, \quad &&\forall \, u \in V, \quad \text{if} \ d=3,\\
\label{Agmon2d}
&\| u\|_{L^\infty(\Omega)}\leq C \|u\|^{\frac12}\|u\|_{H^2(\Omega)}^{\frac12}, \quad && \forall \, u \in H^2(\Omega), \quad \text{if} \ d=2,\\
\label{DL4}%
&\|\nabla u\|_{\mathbf{L}^4(\Omega)}\leq C \| u\|_{L^\infty(\Omega)}^\frac12 \| u\|_{H^2(\Omega)}^\frac12,  \quad &&\forall \, u \in H^2(\Omega), \quad \text{if} \ d=2,3,
\end{align}
and the Brezis-Gallouet inequality (see \cite{BG1980})
\begin{equation}
\label{BGI}
\| u\|_{L^\infty(\Omega)}\leq C \| u\|_V\Big[\log \Big(\mathrm{e}+\frac{\| u\|_{H^2(\Omega)}}{\| u\|_V} \Big) \Big]^{\frac12}, \quad \forall \, u \in H^2(\Omega), \quad \text{if} \ d=2.
\end{equation}

We now introduce the Hilbert space of solenoidal vector-valued functions. We denote by $\mathcal{C}_{0,\sigma}^\infty(\Omega)$ the space of divergence free vector fields in $\mathcal{C}_{0}^\infty(\Omega)$.
We define $\H_\sigma$ and $\V_\sigma$ as the closure of $\mathcal{C}_{0,\sigma}^\infty(\Omega)$ with respect to the $\H$ and $\H_0^1(\Omega)$ norms, respectively.
We also use $( \cdot ,\cdot )$ and 
$\Vert \cdot \Vert $ for
the norm and the inner product in $\H_\sigma$. The space $\V_\sigma$ is endowed with the inner product and norm
$( \uu,\vv )_{\V_\sigma}=
( \nabla \uu,\nabla \vv )$ and  $\|\uu\|_{\V_\sigma}=\| \nabla \uu\|$, respectively.
We denote by $\V_\sigma'$ its dual space.
We recall that the Korn's inequality
entails
$$
\|\nabla\uu\|\leq \sqrt2\|D\uu\|\leq \sqrt2 \| \nabla \uu\|,
\quad \forall \, \uu \in \V_\sigma,
$$
where $D\uu =  \frac12\big(\nabla \uu+ (\nabla \uu)^t\big)$. 
In turn, the above inequality gives that $\uu \rightarrow \|D\uu\|$ is a norm on $\V_\sigma$ equivalent to the initial norm. We consider the Hilbert space
$\W_\sigma= \mathbf{H}^2(\Omega)\cap \V_\sigma$
with inner product and norm
$ ( \uu,\vv)_{\W_\sigma}=( \A\uu, \A \vv )$ and $\| \uu\|_{\W_\sigma}=\|\A \uu \|$, where $\A$ is the Stokes operator (see Appendix \ref{stokesappendix} for the definition and some properties).
We recall that there exists $C>0$ such that
\begin{equation}
\label{H2equiv}
 \| \uu\|_{H^2(\Omega)}\leq C\| \uu\|_{\W_\sigma}, \quad \forall \, \uu\in \W_\sigma.
\end{equation}
Finally, we introduce the trilinear continuous form on $\mathbf{H}_0^1(\Omega)$
$$
b(\uu,\vv,\ww)= \int_{\Omega} (\uu \cdot \nabla) \vv \cdot \ww \, \d x
= \sum_{i,j=1}^2 \int_{\Omega} u_i \frac{\partial v_j}{\partial x_i} w_j \, \d x,
\quad \uu, \vv, \ww\in \mathbf{H}_0^1(\Omega),
$$
satisfying the relation
$b(\uu,\vv,\vv)=0$, for all $\uu \in \V_\sigma$ and $\vv \in \mathbf{H}^1(\Omega)$.
\smallskip

\subsection{Main Assumptions}
We require that the viscosity $\nu \in \C^2(\mathbb{R})$ satisfies
\begin{equation}
\label{ipo-nu}
 0<2\nu_\ast\leq \nu(z)\leq \nu^\ast, \quad \forall \, z\in \mathbb{R},
\end{equation}
for some positive values $\nu_\ast, \nu^\ast$.
The singular potential $\Psi$ belongs to the class of functions 
$\mathcal{C}([-1,1])\cap
\mathcal{C}^{3}(-1,1)$ and has the form
\begin{equation}
\label{Singpot-form}
\Psi(z)= F(z) - \frac{\theta_0}{2}z^2, \quad \forall \, z \in [-1,1],
\end{equation}
with
\begin{equation}
\label{H}
\lim_{z\rightarrow -1}F'(z)=-\infty, \quad
\lim_{z\rightarrow 1}F'(z)=+\infty, \quad
 F''(z)\geq \theta>0,
\end{equation}
and 
\begin{equation}
\label{alpha}
\theta_0-\theta=\alpha>0.
\end{equation}
We define $F(z)=+ \infty$ for any $z\notin [-1,1]$.
We assume without loss of generality that $F(0)=0$.
In addition, we require that $F''$ is convex and
\begin{equation}
\label{Fsec}
F''(z)\leq C\mathrm{e}^{C|F'(z)|}, \quad \forall \, z \in (-1,1).
\end{equation}
for some positive constant $C$.
Also, we assume that there exists $\gamma \in(0,1)$ such that
$F''$ is non-decreasing in $[1-\gamma,1)$ and non-increasing in
$(-1,-1+\gamma]$.

\begin{remark}
\label{Remark-assump}
The above assumptions are satisfied and motivated by the logarithmic potential \eqref{SINGPOT}. In that case, $\Psi$ is extended by continuity at $z=\pm 1$. 
Notice also that the viscosity function \eqref{VISFORM} can be easily extended on the whole $\mathbb{R}$ in such way to comply \eqref{ipo-nu}. Moreover, other physically relevant profiles can be considered (up to a suitable extension), such as (see, e.g., \cite{GLLW2017,ESVVCDC2016})
$$
\nu(z)= \frac{\nu_1 \nu_2}{\big(\nu_1(\frac{1-z}{2})+ \nu_2(\frac{1+z}{2})\big)},\quad \text{or} \quad
\nu(z)=\nu_1 \mathrm{e}^{( \log(\frac{\nu_2}{\nu_1})(\frac{1-z}{2}))}, \quad \forall \ z \in [-1,1],
$$
where $\nu_1$ and $\nu_2$ are the constant viscosities of the  two fluids.
\end{remark}

\noindent
\textbf{General agreement.}
Throughout the paper, the symbol $C$ denotes a positive constant which may be estimated in terms of $\Omega$ and of the parameters of the system (see Main assumptions). Any further dependence will be explicitly pointed out when necessary. In particular, the notation $C=C(\kappa_1,...,\kappa_n)$ denotes a positive constant 
which explicitly depends on the quantities $\kappa_i$, $i=1,...,n$.

\subsection{Existence of Weak Solutions} 
Let us introduce the notion of weak solution. 
\begin{definition}
\label{defweak}
Let $T>0$ and $d=2,3$. Given $\uu_0\in \H_\sigma$, $\vp_0\in V\cap L^{\infty}(\Omega)$ with $\| \vp_0\|_{L^\infty(\Omega)}\leq 1$ and $|\overline{\vp}_0|<1$, a pair 
$(\uu,\vp)$ is a weak solution to \eqref{system}-\eqref{bcic} on $[0,T]$ if
\begin{align*}
&\uu \in  L^{\infty}(0,T;\H_\sigma)\cap L^{2}(0,T; \V_\sigma),\ 
\partial_t \uu \in L^{\frac{4}{d}}(0,T;\V_\sigma'),\\
&\vp \in  L^\infty(0,T; V)\cap L^2(0,T; H^2(\Omega)) \cap H^1(0,T; V'),\\
&\vp \in L^{\infty}(\Omega\times (0,T)),\quad\text{with}\quad |\varphi(x,t)|<1 
\ \text{a.e. }(x,t)\in \Omega\times (0,T),\\
\end{align*}
and satisfies
\begin{align}
\label{e1}
& \l \partial_t \uu,\vv \r + b(\uu,\uu,\vv)
+ (\nu(\vp) D\uu, D \vv)
= ( \mu\nabla \vp,\vv), \quad &&\forall \, \vv \in \V_\sigma,\\ 
\label{e2}
&\l \partial_t \vp,v\r+ ( \uu\cdot \nabla \vp,v) 
+ (\nabla \mu, \nabla v )=0, 
\quad &&\forall \, v \in V,
\end{align}
for almost every $t\in (0,T)$, where $\mu\in L^2(0,T;V)$ is given by
$
\mu=-\Delta \varphi +\Psi'(\varphi).
$
Moreover, $\partial_\n \varphi=0$ almost everywhere 
on $\partial\Omega\times(0,T)$, $\uu(\cdot,0)=\uu_0$ and 
$\varphi(\cdot,0)=\varphi_0$ in $\Omega$.
\end{definition}

\begin{remark}
\label{tensor}
Notice that equation \eqref{e1} is equivalent to
\begin{equation*}
\l \partial_t \uu,\vv \r - ( \uu\otimes \uu,\nabla \vv)
+ ( \nu(\vp) D\uu, D\vv)
= ( \nabla\vp \otimes \nabla \vp,\nabla \vv ), 
\quad \forall \, \vv \in \V_\sigma,
\end{equation*}
for almost every $t\in (0,T)$,
where $(v\otimes w)_{ij}=v_iw_j$, $i,j=1$, $2$, in light of the equalities 
\begin{equation}
(\uu\cdot\nabla)\uu=\mathrm{div }\,(\uu\otimes \uu)
\end{equation}
and
$$
\mu \nabla \vp=\nabla\Big(\frac12 |\nabla \vp|^2+\Psi(\vp) \Big) -\mathrm{div }\,(\nabla \vp \otimes \nabla \vp).
$$
\end{remark}

The following existence result of weak solutions has been proven in \cite[Theorem 1]{A} (see also \cite{MT}). 

\begin{theorem}
\label{existence}
Let $d=2,3$. Assume that $\uu_0\in \H_\sigma$, $\vp_0\in V\cap L^{\infty}(\Omega)$ with $\| \vp_0\|_{L^\infty(\Omega)}\leq 1$ and $|\overline{\vp}_0|<1$.
Then, for any $T>0$, there exists a weak solution $(\uu,\vp)$ to 
\eqref{system}-\eqref{bcic} on $[0,T]$ in the sense of Definition \ref{defweak} such that 
\begin{align}
&\uu\in \C([0,T],\H_\sigma), \ \text{if}\ d=2, \quad
\uu\in \C_w([0,T],\H_\sigma), \ \text{if}\ d=3, \label{reguweak}\\
&\vp\in \mathcal{C}([0,T],V)\cap L^4(0,T;H^2(\Omega))\cap L^2(0,T;W^{2,p}(\Omega)), \label{regvpweak}
\end{align}
where $2\leq p <\infty$ is arbitrary if $d=2$ and $p=6$ if $d=3$.
Moreover, given the energy of the system
\begin{equation}
\label{energy}
\E(\uu,\vp)=\frac12 \| \uu\|^2+
\frac12 \| \nabla \vp\|^2+ \int_{\Omega} \Psi(\vp) \,\d x,
\end{equation}
any weak solution satisfies the energy inequality
\begin{equation}
\label{EI}
\E(\uu(t),\vp(t))+ \int_\tau^t   \Big( \|\sqrt{\nu(\vp(s))}D \uu (s)\|^2
+\| \nabla \mu(s)\|^2 \Big) \, \d s \leq  \E(\uu(\tau),\varphi(\tau))
\end{equation}
for almost every $0\leq \tau<T$, including $\tau=0$, and every $t\in [\tau,T]$. If $d=2$, then 
\eqref{EI} holds with equality for every $0\leq \tau<t\leq T$.
\end{theorem}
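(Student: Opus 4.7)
The plan is to construct weak solutions via a double approximation scheme: first regularize the singular potential $\Psi$ by a family $\Psi_\varepsilon \in \C^2(\mathbb{R})$ with polynomial growth (for instance, by truncating $F''$ outside $[-1+\varepsilon,1-\varepsilon]$ and integrating twice, or via Moreau--Yosida regularization of the convex part $F$), and then apply a Faedo--Galerkin method based on the eigenfunctions of the Stokes operator $\A$ for $\uu$ and of the Neumann--Laplacian on $\Omega$ for $\vp$. For fixed $\varepsilon>0$ and fixed Galerkin level $n$, local existence of the approximating ODE system follows from Cauchy--Lipschitz.

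The core estimate is the energy identity for the approximate problem: testing the momentum equation by $\uu_n$, the order-parameter equation by $\mu_n$, and the identity $\mu_n = -\Delta\vp_n + \Psi_\varepsilon'(\vp_n)$ by $\partial_t \vp_n$, the coupling terms $(\mu_n\nabla\vp_n,\uu_n)$ and $(\uu_n\cdot\nabla\vp_n,\mu_n)$ cancel exactly (using $\mathrm{div}\,\uu_n=0$), which yields
\begin{equation*}
\ddt \E_\varepsilon(\uu_n,\vp_n) + \|\sqrt{\nu(\vp_n)}D\uu_n\|^2 + \|\nabla\mu_n\|^2 = 0.
\end{equation*}
Together with \eqref{ipo-nu}, Korn's inequality, and control of $\overline{\mu}_n$ via an integration of $\Psi_\varepsilon'(\vp_n)$ (using $\overline{\vp}_n=\overline{\vp}_0$ and the dissipation structure of $F$), this gives uniform bounds $\uu_n\in L^\infty(0,T;\H_\sigma)\cap L^2(0,T;\V_\sigma)$, $\vp_n\in L^\infty(0,T;V)$ and $\mu_n\in L^2(0,T;V)$. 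Elliptic regularity applied to $-\Delta\vp_n=\mu_n-\Psi_\varepsilon'(\vp_n)$, combined with \eqref{LADY}--\eqref{DL4}, yields the $L^4(0,T;H^2)$ and $L^2(0,T;W^{2,p})$ bounds in \eqref{regvpweak}, while the equations themselves give $\partial_t\uu_n\in L^{4/d}(0,T;\V_\sigma')$ and $\partial_t\vp_n\in L^2(0,T;V')$.

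The passage to the limit $n\to\infty$ at $\varepsilon$ fixed is standard via Aubin--Lions: compact embeddings $\V_\sigma\hookrightarrow\hookrightarrow\H_\sigma$ and $V\hookrightarrow\hookrightarrow H$ give strong $L^2$ convergence of $\uu_n$ and $\vp_n$, enough to pass to the limit in $b(\uu_n,\uu_n,\cdot)$, $\nu(\vp_n)D\uu_n$, $\mu_n\nabla\vp_n$ (after integration by parts as in Remark \ref{tensor}), and in the smooth $\Psi_\varepsilon'(\vp_n)$. This produces $(\uu_\varepsilon,\vp_\varepsilon)$ solving the regularized system with an $\varepsilon$-uniform energy inequality obtained by weak lower semicontinuity.

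The genuinely delicate step is the passage $\varepsilon\to 0$, for two intertwined reasons. First, one must recover $|\vp(x,t)|<1$ a.e.\ in the limit: the uniform $L^2$ bound on $\Psi_\varepsilon'(\vp_\varepsilon)$ (equivalently on $F_\varepsilon'(\vp_\varepsilon)$), deduced from the $L^2(0,T;V)$ bound on $\mu_\varepsilon$ and the $L^\infty(0,T;V)$ bound on $\vp_\varepsilon$ after testing with $\Psi_\varepsilon'(\vp_\varepsilon)-\overline{\Psi_\varepsilon'(\vp_\varepsilon)}$, together with the blow-up of $F'$ at $\pm 1$ and Fatou's lemma, rules out the set $\{|\vp|=1\}$ having positive measure. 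Second, to pass to the limit in $\nu(\vp_\varepsilon)D\uu_\varepsilon$ one needs strong convergence of $\vp_\varepsilon$ in, say, $L^2(0,T;H)$, again obtained from Aubin--Lions using the $H^1$-in-space and $V'$-in-time bounds. The continuity statements \eqref{reguweak}--\eqref{regvpweak} for the limit follow from the usual interpolation $\uu\in L^\infty(0,T;\H_\sigma)\cap H^1(0,T;\V_\sigma')\Rightarrow\C_w([0,T];\H_\sigma)$ (which upgrades to $\C([0,T];\H_\sigma)$ in 2D by the energy equality, itself a consequence of $\partial_t\uu\in L^2(0,T;\V_\sigma')$ when $d=2$), and analogously for $\vp$; the energy inequality \eqref{EI} with equality in 2D is then a direct consequence of the regularity of the weak time derivatives and the chain rule.
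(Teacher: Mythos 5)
Your construction is correct in outline and is essentially the standard one: the paper itself does not prove Theorem \ref{existence} but cites \cite[Theorem 1]{A} and \cite{MT} (with the $L^4(0,T;H^2(\Omega))$ regularity taken from \cite{GGW}), and those proofs, like the paper's own Galerkin scheme for strong solutions in Theorem \ref{strong}, use exactly the two-level approximation (regularized potential $\Psi_\varepsilon$ plus Faedo--Galerkin), the cancellation of the Korteweg and convective coupling terms in the energy identity, the control of $\overline{\mu}$ via $(\Psi_\varepsilon'(\vp),\vp-\overline{\vp})$, and Fatou's lemma against the blow-up of $F'$ to recover $|\vp|<1$ a.e. One step is stated too loosely to be left as is: the $L^4(0,T;H^2(\Omega))$ bound in \eqref{regvpweak} does not follow from generic elliptic regularity applied to $-\Delta\vp=\mu-\Psi'(\vp)$ together with \eqref{LADY}--\eqref{DL4} (that route only yields $L^2$ in time, since $\mu\in L^2(0,T;V)$ and the uniform bound on $F'_\varepsilon(\vp_\varepsilon)$ is itself only $L^2$ in time); the correct mechanism is to test $\mu=-\Delta\vp+\Psi'(\vp)$ with $-\Delta\vp$, use $F''\geq\theta$ and \eqref{alpha} to absorb the nonlinearity, and conclude $\|\vp\|_{H^2(\Omega)}^2\leq C(1+\|\nabla\mu\|)$ as in \eqref{vpapprH2} and Theorem \ref{ell2}, which upgrades $\nabla\mu\in L^2(0,T;\mathbf{L}^2(\Omega))$ to $\vp\in L^4(0,T;H^2(\Omega))$; the $L^2(0,T;W^{2,p}(\Omega))$ bound likewise requires the singular elliptic estimate of Theorem \ref{ell2} with datum $f=\mu+\theta_0\vp\in L^2(0,T;V)$ rather than moving $\Psi'(\vp)$ to the right-hand side.
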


\begin{remark} 
\label{datum}
We observe that any admissible initial condition 
in Theorem \ref{existence} is such that
$\Psi(\vp_0)\in L^1(\Omega)$, so that 
$\E(\uu_0,\vp_0)<\infty$. 
However, due to $|\overline{\vp}_0|<1$, $\vp_0$ cannot be a pure concentration, i.e. $\vp_0\equiv 1$ or $\vp_0\equiv -1$.  
\end{remark}

\begin{remark}
The regularity $\vp\in L^4(0,T;H^2(\Omega))$ is not proved in \cite{A,MT}, but it has been recently shown in \cite{GGW}. Given a weak solution $(\uu,\vp)$, it can be inferred from Theorem \ref{ell2} in Appendix \ref{Neumann-Laplace} with $f=\mu+\theta_0\vp \in L^2(0,T;V)$ and $u=\vp\in L^\infty(0,T;V)$ (cf. also \eqref{vpapprH2} below). 
\end{remark}

\section{Uniqueness of Weak Solutions in Two Dimensions} 
\label{3}
\setcounter{equation}{0}

\noindent
In this section we prove the uniqueness of weak solutions for the two-dimensional NSCH system with unmatched viscosities. The key idea is to derive a differential inequality involving norms (for the difference of two solutions) weaker than the natural ones given by the energy of the system (cf. \eqref{energy}). We take full advantage of the regularity properties of the Neumann and Stokes operators which allow us to recover coercive terms. In such a way, we are able to handle the Korteweg force (i.e. the term $\mu \nabla \vp$) in the Navier-Stokes equations and the convective terms. This technique will be also employed to show the uniqueness of strong solutions if $d=3$.

\begin{theorem}
\label{weak-uniqueness}
Let $d=2$. Given $(\uu_{0},\vp_{0})$ be such that
$\uu_{0}\in \H_\sigma$, $\vp_{0}\in V$, $\|\vp_{0}\|_{L^\infty(\Omega)}\leq 1$ and 
$|\overline{\vp}_{0}|<1$, the weak solution 
to \eqref{system}-\eqref{bcic} on $[0,T]$ 
with initial datum $(\uu_{0},\vp_{0})$ is unique.
\end{theorem}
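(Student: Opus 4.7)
The plan is to estimate the difference $\uu:=\uu_{1}-\uu_{2}$, $\vp:=\vp_{1}-\vp_{2}$ in norms strictly weaker than the natural energy norms of \eqref{energy}. Mass conservation on the Cahn--Hilliard side gives $\overline{\vp}(t)\equiv 0$, so the zero-mean inverse Neumann Laplacian $\mathcal{N}^{-1}$ is applicable to $\vp$, and, since $\uu(t)\in\H_{\sigma}$, the inverse Stokes operator $\A^{-1}$ is applicable to $\uu$. Set $\widetilde{\uu}:=\A^{-1}\uu\in\W_{\sigma}$ and $\widetilde{\vp}:=\mathcal{N}^{-1}\vp\in H^{2}(\Omega)$, and define the weak norms
\[
\|\uu\|_{\ast}^{2}:=\langle\uu,\widetilde{\uu}\rangle=\|\nabla\widetilde{\uu}\|^{2},\qquad \|\vp\|_{\flat}^{2}:=(\vp,\widetilde{\vp})=\|\nabla\widetilde{\vp}\|^{2}.
\]
The elliptic estimates $\|\widetilde{\uu}\|_{\W_{\sigma}}\leq C\|\uu\|$ and $\|\widetilde{\vp}\|_{H^{2}(\Omega)}\leq C\|\vp\|$ from Appendixes \ref{Neumann-Laplace}--\ref{stokesappendix} are the ``regularity properties of the Neumann and Stokes operators'' that make coercive terms recoverable despite the weaker framework.

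I would then test the difference momentum equation by $\widetilde{\uu}$ and the difference Cahn--Hilliard equation by $\widetilde{\vp}$, and sum. The capillarity force is first rewritten via the Korteweg identity $\mu_{i}\nabla\vp_{i}=-\mathrm{div}(\nabla\vp_{i}\otimes\nabla\vp_{i})+\nabla\bigl(\Psi(\vp_{i})+\tfrac{1}{2}|\nabla\vp_{i}|^{2}\bigr)$, so that pairing with the solenoidal $\widetilde{\uu}$ annihilates the pressure-like gradient and only $(\nabla\vp\otimes\nabla\vp_{1}+\nabla\vp_{2}\otimes\nabla\vp,\nabla\widetilde{\uu})$ survives. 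Two coercive terms are then recovered: integration by parts in $(\nu(\vp_{1})D\uu,D\widetilde{\uu})$ using the Stokes identity $-\Delta\widetilde{\uu}+\nabla\widetilde{p}=\uu$ produces $\tfrac{1}{2}\int\nu(\vp_{1})|\uu|^{2}\,\d x$ plus lower-order errors in $\nabla\nu(\vp_{1})=\nu'(\vp_{1})\nabla\vp_{1}$, giving coercivity in $\|\uu\|$; and on the Cahn--Hilliard side, $(\Delta\mu,\widetilde{\vp})=-(\mu,\vp)$ together with the convexity of $F$ (which furnishes $(F'(\vp_{1})-F'(\vp_{2}))(\vp_{1}-\vp_{2})\geq\theta\vp^{2}$) yields $-\|\nabla\vp\|^{2}+\alpha\|\vp\|^{2}$, the last piece being absorbable via the interpolation $\|\vp\|^{2}\leq C\|\vp\|_{\flat}\|\nabla\vp\|$ that the mean-zero constraint permits.

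The convective trilinear terms, the Korteweg residual, the Cahn--Hilliard transport, and the $\nabla\nu(\vp_{1})$ errors are bounded using the Gagliardo--Nirenberg inequalities \eqref{LADY}--\eqref{DL4}, the Stokes and Neumann $\mathbf{H}^{2}$-regularity of $\widetilde{\uu},\widetilde{\vp}$, and the $L^{4}(0,T)$-in-time control of $\|\uu_{i}\|_{\mathbf{L}^{4}(\Omega)}$ and $\|\nabla\vp_{i}\|_{\mathbf{L}^{4}(\Omega)}$ inherited from \eqref{regvpweak}. The main obstacle is the viscosity-mismatch term $\bigl((\nu(\vp_{1})-\nu(\vp_{2}))D\uu_{2},D\widetilde{\uu}\bigr)$: since $D\uu_{2}$ only lies in $L^{2}(0,T;\mathbf{L}^{2})$, a naive H\"older splitting injects $\|\uu\|_{\mathbf{L}^{4}}$-type norms of the difference that cannot be absorbed by $\|\uu\|^{2}$ alone. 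The resolution exploits the physical constraint $|\vp|\leq 2$, which gives $\|\nu(\vp_{1})-\nu(\vp_{2})\|_{L^{p}(\Omega)}\leq C_{p}\|\vp\|^{2/p}$ for every $p<\infty$, traded against the Gagliardo--Nirenberg estimate $\|D\widetilde{\uu}\|_{\mathbf{L}^{q}(\Omega)}\leq C_{q}\|\uu\|^{1-2/q}\|\uu\|_{\ast}^{2/q}$ inherited from the Stokes $\mathbf{H}^{2}$-regularity, with $1/p+1/q=1/2$. Calibrating $p,q$ and distributing via repeated Young inequalities produces a bound of the form $\epsilon(\|\uu\|^{2}+\|\nabla\vp\|^{2})+g(t)(\|\uu\|_{\ast}^{2}+\|\vp\|_{\flat}^{2})$ with $g\in L^{1}(0,T)$. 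Summing all estimates yields
\[
\frac{d}{dt}\bigl(\|\uu\|_{\ast}^{2}+\|\vp\|_{\flat}^{2}\bigr)+\nu_{\ast}\|\uu\|^{2}+\|\nabla\vp\|^{2}\;\leq\; g(t)\bigl(\|\uu\|_{\ast}^{2}+\|\vp\|_{\flat}^{2}\bigr),
\]
and since $\uu(0)=\vp(0)=0$, Gronwall's lemma forces $\uu\equiv 0$ and $\vp\equiv 0$.
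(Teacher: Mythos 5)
Your overall architecture coincides with the paper's: test the difference equations with $\A^{-1}\uu$ and $A_0^{-1}\vp$, recover the coercive terms $\nu_\ast\|\uu\|^2$ and $\|\nabla\vp\|^2$ through the Stokes identity $-\Delta\A^{-1}\uu+\nabla p=\uu$ and the monotonicity of $F'$, and control the convective and Korteweg terms by Gagliardo--Nirenberg and the $H^2$-regularity of the inverse operators. All of that is sound and matches Steps of the paper's proof essentially verbatim. You have also correctly identified the viscosity-mismatch term $\mathcal{I}_3=-((\nu(\vp_1)-\nu(\vp_2))D\uu_2,\nabla\A^{-1}\uu)$ as the crux.

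However, your proposed resolution of $\mathcal{I}_3$ does not close. With $\tfrac1p+\tfrac1q=\tfrac12$ and $q<\infty$, your H\"older/Gagliardo--Nirenberg splitting gives $\mathcal{I}_3\leq C\|D\uu_2\|\,\|\vp\|^{2/p}\,\|\uu\|^{1-2/q}\,\|\uu\|_\ast^{2/q}$, whose total homogeneity in the difference $(\uu,\vp)$ is $1+\tfrac2p<2$, since $\tfrac2p=1-\tfrac2q<1$. A quantity of degree strictly less than two cannot be split by Young's inequality into $\epsilon(\|\uu\|^2+\|\nabla\vp\|^2)+g(t)\mathcal{H}$ with $g\in L^1(0,T)$: whatever calibration you choose, the weak norms end up raised to a total power $\sigma<2$, so the best you obtain is $\dot{\mathcal{H}}\leq g(t)\,\mathcal{H}^{\sigma/2}$ with $\sigma/2<1$, and this ODE admits nontrivial solutions emanating from $\mathcal{H}(0)=0$ (the Osgood condition fails for $y\mapsto y^{\beta}$, $\beta<1$). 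The degree deficit is exactly the failure of $H^1\hookrightarrow L^\infty$ in two dimensions: $\|\nu(\vp_1)-\nu(\vp_2)\|_{L^\infty}$ would cost $\|\vp\|_{L^\infty}$, which $\|\nabla\vp\|$ cannot pay. The paper's proof replaces your $L^p$--$L^q$ trade by the Br\'ezis--Gallouet inequality \eqref{BGI}, bounding $\|\vp\|_{L^\infty(\Omega)}\leq C\|\nabla\vp\|\big[\log\big(C+\|\vp\|_{H^2(\Omega)}/\|\nabla\vp\|\big)\big]^{1/2}$ (legitimate since $\overline{\vp}=0$), so that $\mathcal{I}_3$ is of full degree two up to a logarithmic correction; the resulting inequality \eqref{finaldiffeq}, $\dot{\mathcal{H}}+\mathcal{G}\leq\mathcal{Y}_1\mathcal{H}+\mathcal{Y}_2[\mathcal{H}\mathcal{G}\log(\mathcal{S}/\mathcal{G})]^{1/2}$, is then handled not by the classical Gronwall lemma but by the logarithmic Osgood-type lemma of Li and Titi \cite[Lemma 2.2]{LiTiti2016}. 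Both of these ingredients are missing from your argument, and without them (or a substitute of equal strength) the proof of uniqueness does not go through.
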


\begin{proof}
Let $(\uu_1,\vp_1)$ and $(\uu_2,\vp_2)$ be two weak solutions to \eqref{system}-\eqref{bcic} on $[0,T]$ 
with the same initial datum $(\uu_{0},\vp_{0})$.
We define $\uu=\uu_1-\uu_2$ and $\vp=\vp_1-\vp_2$.
According to Remark \ref{tensor},  $\uu$ and $\vp$ solve
\begin{align}
 &\l \partial_t \uu,\vv \r - (\uu_1\otimes\uu,\nabla \vv)
 -( \uu\otimes\uu_2,\nabla \vv)
 +(\nu(\vp_1) D \uu,\nabla \vv) \notag\\
 &\quad +((\nu(\vp_1)-\nu(\vp_2))D\uu_2, \nabla \vv)
 =( \nabla \vp_1\otimes\nabla \vp, \nabla \vv)
+(\nabla \vp\otimes \nabla\vp_2,\nabla \vv ), 
\quad &&\forall \, \vv \in \V_\sigma, \label{e1u}\\ 
\label{e2u}
&\l \partial_t \vp,v\r+ 
(\uu_1\cdot \nabla \vp,v)
+(\uu\cdot \nabla \vp_2,v)+ (\nabla \mu, \nabla v )=0, \quad &&\forall \, v \in V, 
\end{align}
where
$\mu=-\Delta \vp+\Psi'(\vp_1)-\Psi'(\vp_2).$
Taking $v=1$ in \eqref{e2u} and observing that the integrals over $\Omega$ of $\uu_1\cdot \nabla \vp$
 and $\uu\cdot \nabla \vp_2$ vanish, 
we have $\overline{\vp}(t)= \overline{\vp}(0)=0$, for all $t \in [0,T]$.
We rewrite \eqref{e2u} as
\begin{equation}
\label{e2u2}
\l \partial_t \vp,v\r
-(\vp \uu_1,\nabla v) 
-(\vp_2 \uu, \nabla v)
+(\nabla \mu, \nabla v)=0, \quad \forall \, v \in V,
\end{equation}
and we recall the following estimates (cf. \eqref{reguweak}-\eqref{regvpweak})
\begin{equation}
\label{bound}
\|\uu_i(t)\| \leq C_0,\quad
\| \vp_i(t)\|_V\leq C_0,\quad
\| \vp_i(t)\|_{L^\infty(\Omega)}\leq 1,
\quad \forall \, t \in [0,T], \ i=1,2,
\end{equation}
where the positive constant $C_0$ depends on $\E(\uu_0,\vp_0)$.
Now, taking $v=A_0^{-1}\vp$ in \eqref{e2u2} (see Appendix \ref{Neumann-Laplace} for the definition of $A_0$)
and using \eqref{propN4}, we obtain
\begin{equation*}
\frac12 \ddt \| \vp\|_{\ast}^{2}
+(\mu, \vp)=\mathcal{I}_1+\mathcal{I}_2,
\end{equation*}
where $\| \vp\|_\ast=\| \nabla A_0^{-1}\vp\|$ and
\begin{equation}
\label{I1-2}
\mathcal{I}_1=(\vp \uu_1 ,\nabla A_0^{-1}\vp),
\quad \mathcal{I}_2=( \vp_2 \uu, \nabla A_0^{-1}\vp).
\end{equation}
By the assumptions on $\Psi$, we have
\begin{align*}
(\mu, \vp)&= \|\nabla \vp\|^2+(\Psi'(\vp_1)-\Psi'(\vp_2),\vp) \\
&\geq \|\nabla \vp\|^2 - \alpha \| \vp\|^2,
\end{align*}
where $\alpha$ is defined in \eqref{alpha}.
By definition of $A_0^{-1}$, we get 
\begin{align}
\label{interpolation}
\alpha\|\vp \|^2&= \alpha (\nabla A_0^{-1} \vp, \nabla \vp) \notag\\
&\leq \frac12 \|\nabla \vp\|^2 +
 \frac{\alpha^2}{2} \| \vp\|_{\ast}^2,
\end{align}
and we end up with 
\begin{equation}
\label{uniq1}
\frac{1}{2} \ddt \| \vp\|_{\ast}^{2}+ \frac12 \| \nabla \vp\|^2\leq \frac{\alpha^2}{2} \|\vp\|_{\ast}^2+\mathcal{I}_1+\mathcal{I}_2. 
\end{equation}
Taking $\vv= \A^{-1} \uu$ in \eqref{e1u} (see Appendix \ref{stokesappendix} for the definition of $\A$), we find
\begin{equation}
\label{uniq2}
\frac12 \ddt \| \uu\|_{\sharp}^2+ (\nu(\vp_1)D \uu, \nabla \A^{-1} \uu)= \mathcal{I}_3+\mathcal{I}_4+\mathcal{I}_5,
\end{equation}
where $\|\uu \|_{\sharp}=\| \nabla \A^{-1}\uu\|$ and
\begin{align*}
\mathcal{I}_3&=-((\nu(\vp_1)-\nu(\vp_2))D\uu_2, \nabla \A^{-1} \uu),\\
\mathcal{I}_4&=(\uu_1\otimes \uu,\nabla \A^{-1} \uu)+ ( \uu\otimes \uu_2,\nabla \A^{-1} \uu),\\
\mathcal{I}_5&=( \nabla \vp_1\otimes \nabla \vp,\nabla \A^{-1} \uu)
+( \nabla \vp\otimes \nabla \vp_2,\nabla \A^{-1} \uu).
\end{align*}
Recalling that $\mathrm{div}\, (^t(\nabla \vv))=\nabla( \mathrm{div}\,\vv)$ and $\A^{-1} \uu \in L^2(0,T;D(\A))$, and integrating by parts, we obtain
\begin{align}
\label{cont1}
(\nu(\vp_1)D \uu, \nabla \A^{-1} \uu)&
= (\nabla \uu, \nu(\vp_1)D \A^{-1} \uu)\notag \\
&= -(\uu, \mathrm{div }\, (\nu(\vp_1)D \A^{-1} \uu) )\notag \\
&=-(\uu, \nu'(\vp_1) D \A^{-1}\uu \nabla \vp_1)
- \frac12 (\uu, \nu(\vp_1) \Delta \A^{-1}\uu).
\end{align}
By the properties of the Stokes operator (cf. Appendix \ref{stokesappendix}), there exists 
$p\in L^2(0,T;V)$ such that $-\Delta \A^{-1}\uu
+\nabla p= \uu$ almost everywhere in $\Omega \times (0,T)$. 
By \eqref{esthigh} and \eqref{pL2}, we have
\begin{equation}
\label{pressure}
\| p \| \leq C \|\nabla \A^{-1} \uu\|^\frac12 \| \uu\|^\frac12, \quad 
\| p \|_{V}\leq C \| \uu\|.
\end{equation}
Therefore, we are led to
\begin{align}
\label{cont2}
-\frac12(\uu, \nu(\vp_1)\Delta  \A^{-1} \uu)
&=\frac12 (\nu(\vp_1)\uu, \uu)- \frac12 (\nu(\vp_1)\uu, \nabla p)\notag\\
&\geq \nu_\ast \| \uu\|^2+\frac12( \nu'(\vp_1)\nabla \vp_1 \cdot \uu, p).
\end{align}
Here we have used $\mathrm{div} \, \uu=0$.
We now set
$$
\mathcal{H}(t)= \frac12\| \uu(t)\|_{\sharp}^2+ \frac12 \| \vp(t)\|_{\ast}^2,
$$
and 
$$
\mathcal{I}_6=( \uu, \nu'(\vp_1) D \A^{-1} \uu \nabla \vp_1),\quad
\mathcal{I}_7=-\frac12( \nu'(\vp_1)\nabla \vp_1 
\cdot \uu, p).
$$
Summing \eqref{uniq1} and \eqref{uniq2}, in light of \eqref{cont1} and \eqref{cont2} we arrive at
\begin{align}
\label{inequniq}
\ddt \mathcal{H} +\nu_\ast \| \uu\|^2+ \frac12 \| \nabla \vp\|^2\leq \alpha^2 \mathcal{H} + \sum_{k=1}^7 \mathcal{I}_k,
\end{align}
where $\mathcal{I}_1$ and $\mathcal{I}_2$ are defined in \eqref{I1-2}.
We proceed by estimating all the remainder terms on the right-hand side of \eqref{inequniq}. Hereafter the positive constant $C_i$, $i\in \mathbb{N}$, depends on $\nu_\ast$, $\nu'$, $\Omega$, $C_0$ and the constants that appear in the mentioned embedding results and interpolation inequalities.
By the embedding $V \hookrightarrow L^6(\Omega)$, the Poincar\'{e} inequality and the uniform bound \eqref{bound}, we have
\begin{align*}
\mathcal{I}_1&\leq \| \vp\|_{L^6(\Omega)} \|\uu_1 \|_{L^3(\Omega)}
\| \vp\|_{\ast}\\
&\leq \frac{1}{8} \| \nabla \vp\|^2+ 
C_1 \|\uu_1 \|_{\mathbf{L}^3(\Omega)}^2 \| \vp\|_{\ast}^2,
\end{align*}
and
\begin{align*}
\mathcal{I}_2&\leq \| \vp_2\|_{L^\infty(\Omega)} \| \uu\|\| \vp\|_{\ast}\\
& \leq \frac{\nu_\ast}{8} \| \uu\|^2+ C_2 \| \vp \|_{\ast}^2,
\end{align*}
By \eqref{LADY}, \eqref{H2equiv} and \eqref{bound}, we get
\begin{align*}
\mathcal{I}_4&\leq \Big( \| \uu_1\|_{\mathbf{L}^4(\Omega)} 
+ \| \uu_2\|_{\mathbf{L}^4(\Omega)}\Big) \| \uu\| 
\| \nabla \A^{-1} \uu\|_{\mathbf{L}^4(\Omega)} \\
&\leq C \Big( \| \uu_1\|^\frac12 \| \uu_1\|_{\V_\sigma}^\frac12+ \| \uu_2\|^\frac12 \| \uu_2\|_{\V_\sigma}^\frac12 \Big)   \| \uu\|_{\sharp}^{\frac12} \| \uu\|^{\frac32}\\
&\leq \frac{\nu_\ast}{8} \| \uu\|^2+ C_3 \Big( \| \uu_1\|_{\V_\sigma}^2+\| \uu_2\|_{\V_\sigma}^2\Big) \| \uu\|_{\sharp}^2,
\end{align*}
and
\begin{align*}
\mathcal{I}_5&\leq \Big( \| \nabla \vp_1\|_{\mathbf{L}^\infty(\Omega)}+
 \| \nabla \vp_2\|_{\mathbf{L}^\infty(\Omega)} \Big) \| \nabla \vp\| \|\nabla \A^{-1} \uu\| \\
&\leq \frac{1}{8} \| \nabla \vp\|^2+ 
C_4 \Big( \| \nabla \vp_1\|_{\mathbf{L}^\infty(\Omega)}^2
+\| \nabla \vp_2\|_{\mathbf{L}^\infty(\Omega)}^2 \Big) \| \uu\|_{\sharp}^2,
\end{align*}
Being $\nu'$ globally bounded, by using \eqref{DL4} and the estimates for the pressure \eqref{pressure}, we find
\begin{align*} 
\mathcal{I}_6&\leq C\|\uu\| \| D \A^{-1} \uu\| \|\nabla \vp_1\|_{\mathbf{L}^\infty(\Omega)}\\
&\leq \frac{\nu_\ast}{8} \| \uu\|^2 + C_5 \| \nabla\vp_1 \|_{\mathbf{L}^\infty(\Omega)}^2 \|\uu\|_{\sharp}^2,
\end{align*}
and
\begin{align*}
\mathcal{I}_7&\leq C\| \nabla \vp_1\|_{\mathbf{L}^4(\Omega)} 
\| \uu\| \| p\|_{L^4(\Omega)}\\
&\leq C \|\vp_1\|_{L^\infty(\Omega)}^\frac12 \|\vp_1\|_{H^2(\Omega)}^\frac12 \| \uu\| \| p\|^\frac12 \| p\|_V^\frac12\\
&\leq C \|\vp_1\|_{H^2(\Omega)}^\frac12
\| \nabla \A^{-1}\uu\|^\frac14 \| \uu\|^\frac74\\
&\leq \frac{\nu_\ast}{8} \| \uu\|^2+ C_6 \| \vp_1\|_{H^2(\Omega)}^4 \| \uu\|_{\sharp}^2.
\end{align*}
Finally, regarding $\mathcal{I}_3$, by using \eqref{BGI}, we obtain
\begin{align*}
\mathcal{I}_3&= \Big( \int_0^1 \nu'\big(s \vp_1+(1-s)\vp_2\big)\, \d s \, \vp D \uu_2, \nabla \A^{-1}\uu\Big)\\
&\leq C \| D\uu_2\|  \| \vp\|_{L^\infty(\Omega)}
\| \nabla \A^{-1} \uu\|\\
&\leq C_7 \| \uu_2\|_{\V_\sigma} \| \nabla \vp\| \Big[\log\Big( \mathrm{e}+ \frac{\| \vp\|_{H^2(\Omega)}}{\| \nabla \vp\|}\Big) \Big]^{\frac12}\| \uu\|_{\sharp}.
\end{align*}
Note that, when $\vp\equiv 0$, the logarithmic term on the 
right hand side is assumed to be zero. 
Collecting the above estimates, we find the differential inequality
\begin{equation}
\label{inequniq2}
\ddt \mathcal{H} +\frac{\nu_\ast}{2}\| \uu\|^2+\frac14 \| \nabla \vp\|^2
\leq  \mathcal{Y}_1 \mathcal{H}+  C_7 \| \uu_2\|_{\V_\sigma}  \Big[ \mathcal{H} \| \nabla \vp\|^2 \log\Big( \mathrm{e}+ \frac{\| \vp\|_{H^2(\Omega)}}{\| \nabla \vp\|}\Big) \Big]^{\frac12},
\end{equation}
where 
\begin{align*}
\mathcal{Y}_1(t)&=C_8 \Big(1+ \| \uu_1(t)\|_{\mathbf{L}^3(\Omega)}^2
+   \| \uu_1(t)\|_{\V_\sigma}^2 +\| \uu_2(t)\|_{\V_\sigma}^2\\
& \quad +\| \nabla \vp_1(t)\|_{\mathbf{L}^\infty(\Omega)}^2
+ \| \nabla \vp_2(t)\|_{\mathbf{L}^\infty(\Omega)}^2+ \|\vp_1 (t)\|_{H^2(\Omega)}^4\Big).
\end{align*}
Thanks to Theorem \ref{existence} and the Sobolev embedding 
$W^{2,3}(\Omega)\hookrightarrow W^{1,\infty}(\Omega)$, valid in space dimension two, we deduce that $\mathcal{Y}_1$ belongs to $L^1(0,T)$. 
In addition, recalling from \eqref{bound} that 
$\| \nabla \vp\|\leq C_0$, we have
\begin{align*}
\log\Big( \mathrm{e}+ \frac{\| \vp\|_{H^2(\Omega)}}{\| \nabla\vp\|}\Big)
&\leq \log\Big( \frac{ C_8\big( \| \nabla \vp\|+\| \vp\|_{H^2(\Omega)}\big) }{\|\nabla \vp\|^2}\Big).
\end{align*} 
Therefore, denoting
$$
\mathcal{G}(t)= \frac14 \| \nabla \vp(t)\|^2, \quad \mathcal{Y}_2(t)= C_7 \| \uu_2(t)\|_{\V_\sigma}, \quad \mathcal{S}(t)= \frac{C_8}{4} \Big(\|\nabla \vp(t)\|+\| \vp(t)\|_{H^2(\Omega)}\Big),
$$
we rewrite the differential inequality \eqref{inequniq2} as follows
\begin{equation}
\label{finaldiffeq}
\ddt \mathcal{H} +\mathcal{G}
\leq  \mathcal{Y}_1 \mathcal{H}+ 
\mathcal{Y}_2  \Big[ \mathcal{H} \mathcal{G} \log\Big( \frac{\mathcal{S}}
{\mathcal{G}}\Big) \Big]^{\frac12}.
\end{equation}
Note that $\frac{\mathcal{S}}{\mathcal{G}}\geq 1$ for the choice of $C_8$. Since $\mathcal{Y}_2 \in L^2(0,T)$, $\mathcal{S}\in L^1(0,T)$ and $\mathcal{H}(0)=0$,
we can apply \cite[Lemma 2.2]{LiTiti2016} to conclude that
$\mathcal{H}(t)=0$ for all $t\in [0,T]$, which implies the uniqueness of weak solutions.
\end{proof}

\begin{remark}
\label{uniq-rem1}
An immediate consequence of the argument performed in the proof of Theorem \ref{weak-uniqueness} is the uniqueness of weak solutions to the NSCH system in dimension two with matched viscosities (i.e. $\nu(s)=1$). 
In that particular case, let us consider $(\uu_1,\vp_1)$ and $(\uu_2,\vp_2)$ are two weak solutions to \eqref{system}-\eqref{bcic} on $[0,T]$ with initial data $(\uu_{01},\vp_{01})$ and $(\uu_{02},\vp_{02})$, respectively, where $(\uu_{0i},\vp_{0i})$, $i=1,2$, comply the assumptions of Theorem \ref{existence} and $\overline{\vp}_{01}=\overline{\vp}_{02}$. 
Then, following line by line the above proof and observing that $\mathcal{I}_3=0$, we end up with the differential inequality
$$
\ddt \mathcal{H} \leq \mathcal{Y}_1 \mathcal{H},
$$
where $\mathcal{H}$ and $\mathcal{Y}_1$ are defined above. Hence, we can infer from the Gronwall lemma the following continuous dependence estimate 
\begin{align*}
\| \uu_1(t)&-\uu_2(t)\|_{\V_\sigma'}+\| \vp_1(t)-\vp_2(t)\|_{V_0'}\leq C \| \uu_{01}-\uu_{02}\|_{\V_\sigma'}+ C\| \vp_{01}-\vp_{02}\|_{V_0'}, \quad \forall \, t\in [0,T].
\end{align*}
where $C$ is a positive constant depending on $T$ and $\E(\uu_{0i},\vp_{0i})$, $i=1,2$, but is independent of the specific form of the initial data.
\end{remark}

\begin{remark}
\label{uniq-rem2}
The proof of Theorem \ref{weak-uniqueness} also allows us to deduce the uniqueness of weak solutions to problem \eqref{system}-\eqref{bcic} with unmatched viscosities and regular potential (cf. $\Psi_0$ in Introduction). The only changes in the proof arise from the different regularity of weak solutions. Indeed, the global bound in $L^\infty$ is not known in this case, but any weak solution satisfies $\vp\in L^2(0,T;H^3(\Omega))$ (see \cite{B,GaG3}). Thus, the two terms which need a different control are $\mathcal{I}_2$ and $\mathcal{I}_7$.
Nonetheless, they can be simply estimated in the following way
\begin{align*}
\mathcal{I}_2\leq \frac{\nu_\ast}{8}\|\uu \|^2+C_2 \| \vp_2\|_{L^\infty(\Omega)}^2 \|\vp \|_\ast^2,
\end{align*}
and, by using \eqref{LADY} and $\vp\in L^\infty(0,T;V)$,
\begin{align*}
\mathcal{I}_7&\leq C \|\nabla \vp_1 \|_{\mathbf{L}^4(\Omega)} \| \uu\| \| p\|_{L^4(\Omega)}\\
&\leq C \| \nabla \vp_1\|^{\frac12} \| \vp_1\|_{H^2}^{\frac12} \| \uu\| \| p\|^{\frac12} \| p\|_V^{\frac12}\\
&\leq C\| \vp_1\|_{H^2(\Omega)}^{\frac12}\| \nabla \A^{-1}\uu\|^{\frac14} \| \uu\|^{\frac74}\\
&\leq \frac{\nu_\ast}{4} \| \uu\|^2+ C_6 \| \vp_1\|_{H^2(\Omega)}^4 \| \uu\|_{\sharp}^2.
\end{align*}
Since by interpolation $\vp_i\in L^2(0,T;L^\infty(\Omega)) \cap L^4(0,T;H^2(\Omega))$, $i=1,2$, it is easily seen that we end up with a differential equation having the same form of \eqref{finaldiffeq}.
\end{remark}

\begin{remark}
In the three dimensional case, the above proof does not allow us to deduce even a weak-strong uniqueness property, which is classical with the Navier-Stokes equations, that is the weak solution is unique when a strong solution exists.
In this case, this is due to the form of $\mathcal{I}_4$ involving both $\uu_1$ and $\uu_2$. Hence, we only expect a (conditional) uniqueness result provided that both solutions $\uu_1$ and $\uu_2$ are more regular than Definition \ref{defweak} (at least $\uu_1$, $\uu_2$ satisfy the classical condition in \cite[Remark 3.81]{Temam}).
\end{remark}

We conclude this section with a continuous dependence estimate in dual space norms with a time-dependent double exponential growth. 
  
\begin{proposition}
\label{prop-uniq}
Let $d=2$. Consider two initial data $(\uu_{01},\vp_{01})$ and $(\uu_{02},\vp_{02})$ such that $\uu_{0i}\in \H_\sigma$, $\vp_{0i}\in V$, $\| \vp_{0i}\|_{L^\infty(\Omega)}\leq 1$ and $\overline{\vp}_{01}=\overline{\vp}_{02}\in (-1,1)$. 
The weak solutions $(\uu_1,\vp_1)$, $(\uu_2,\vp_2)$ on $[0,T]$ to \eqref{system}-\eqref{bcic} with initial data $(\uu_{01},\vp_{01})$ and $(\uu_{02},\vp_{02})$, respectively, satisfy the continuous dependence estimate
\begin{equation}
\label{uniqlow}
 \mathcal{H}(t)
\leq C \Big( \frac{\mathcal{H}(0)}{C}\Big)^{ \mathrm{e}^{-\int_0^t \mathcal{Y}_3(s)\, \d s}}
, \quad 
\forall \, t\in [0,T],
\end{equation}
where 
\begin{align*}
\mathcal{H}(t)&= \frac{1}{2}\| \uu_1(t)-\uu_2(t)\|_{\sharp}^2+ \frac12 \| \vp_1(t)-\vp_2(t)\|_{\ast}^2,\\
\mathcal{Y}_3(t)&= C \Big(1+  \| \uu_1(t)\|_{\V_\sigma}^2 +\| \uu_2(t)\|_{\V_\sigma}^2 +\| \nabla \vp_1(t)\|_{\mathbf{L}^\infty(\Omega)}^2
+ \| \nabla \vp_2(t)\|_{\mathbf{L}^\infty(\Omega)}^2+ \|\vp_1 (t)\|_{H^2(\Omega)}^4 \Big), 
\end{align*}
and $C$ is a positive constant depending on the norms of the initial data.
\end{proposition}
\begin{proof}
The argument is based on an estimate of $\mathcal{I}_3$ different than that in the proof of Theorem \ref{weak-uniqueness}. Thanks to the product estimate \eqref{logprod2} in Appendix \ref{Log-est}, using the properties of $A_0$ and $\A$ (see Appendixes \ref{Neumann-Laplace} and \ref{stokesappendix}) and \eqref{bound}, we have
\begin{align*}
\mathcal{I}_3&\leq C \| D\uu_2\| \| \vp \nabla \A^{-1}\uu\|\\
&\leq C \| D\uu_2\| \| \nabla \vp\| \Big( \| \nabla \A^{-1}\uu\|+ \| \vp\|_{-1} \Big) \Big[ \log \Big( \mathrm{C} \frac{\| \uu\|+ \| \vp\|_V}{\| \nabla \A^{-1}\uu\|+ \| \vp\|_{-1}} \Big) \Big]^{\frac12}\\
&\leq \frac14 \| \nabla \vp\|^2+ 
C_9 \| D\uu_2\|^2 \mathcal{H} \log \Big( \frac{C_{10}}{\mathcal{H}} \Big). 
\end{align*}
Noting that $\mathcal{H}\leq C_{11}$ by \eqref{bound}, we observe that $C_{10}$ can be chosen sufficiently large such that $\log (\frac{C_{10}}{\mathcal{H}})\geq 1$. Exploiting the above estimate in the proof of Theorem \ref{weak-uniqueness}, we eventually deduce the refined differential inequality for the difference of two solutions (cf. \eqref{finaldiffeq})
\begin{equation}
\label{ref-diffineq}
\ddt \mathcal{H}
\leq  \mathcal{Y}_3 \mathcal{H}\log \Big( \frac{C_{10}}{\mathcal{H}} \Big).
\end{equation}
After integration of \eqref{ref-diffineq}, we obtain the following estimate
\begin{equation}
\label{ref-diffineq2}
 \mathcal{H}(t)
\leq C_{10} \Big( \frac{\mathcal{H}(0)}{C_{10}}\Big)^{ \mathrm{e}^{-\int_0^t \mathcal{Y}_3(s)\, \d s}}
, \quad 
\forall \, t\in [0,T],
\end{equation}
where $\mathcal{Y}_3\in L^1(0,T)$, for any $T>0$, due to the regularity in Theorem \ref{existence}. Noticing that $\mu(s)=s\log (\frac{C}{s})$ is an Osgood modulus of continuity, the above \eqref{ref-diffineq2}-\eqref{ref-diffineq} also imply the uniqueness of weak solutions.
\end{proof}

\begin{remark}
\label{Had}
We note that the estimate for the difference of two solutions \eqref{uniqlow} is not sufficient to guarantee the continuity of solutions with respect to the data in the norm of the energy space $\H_\sigma\times V$. A similar remark holds for the constant viscosity case (cf. Remark \ref{uniq-rem1}). Nevertheless, the continuous dependence in the energy space will be recovered by using the propagation of regularity and an interpolation technique in Section \ref{4}.
\end{remark}

\section{Global Strong Solutions and Regularity in Two Dimensions}
\label{4}
\setcounter{equation}{0}

\noindent
In this section we prove the global well-posedness of strong solutions for the NSCH system with unmatched viscosities in dimension two. Later on, some consequences will be inferred regarding the regularity and continuous dependence from the initial data. 

\begin{theorem}
\label{strong}
Let $d=2$, $\uu_0 \in \V_\sigma$ and $\vp_0 \in H^2(\Omega)$ be such that 
$\| \vp_0\|_{L^\infty(\Omega)}\leq 1$, $|\overline{\vp}_0|<1$, $\mu_0=-\Delta \vp_0+\Psi'(\vp_0) \in V$ and 
$\partial_\n \vp_0=0$ on $\partial \Omega$.
Then, for any $T>0$, there exists a unique strong solution to \eqref{system}-\eqref{bcic} on $[0,T]$ such that 
\begin{align*}
&\uu \in L^\infty(0,T;\V_\sigma)\cap L^2(0,T;\W_\sigma)\cap H^1(0,T;\H_\sigma), \quad \pi \in L^2(0,T;V),\\
&\vp \in L^\infty(0,T;W^{2,p}(\Omega))\cap H^1(0,T;V),\\
&\mu \in L^{\infty}(0,T;V)\cap L^2(0,T;H^3(\Omega))\cap H^1(0,T;V'),
\end{align*}
where $2\leq p<\infty$. The strong solution satisfies \eqref{system} 
almost everywhere in $\Omega \times (0,T)$ and  $
\partial_\n \mu=0$ almost everywhere on 
$\partial\Omega\times(0,T)$. In addition, given two strong solutions $(\uu_1,\vp_1)$, $(\uu_2,\vp_2)$ on $[0,T]$ with initial data $(\uu_{01},\vp_{01})$ and $(\uu_{02},\vp_{02})$, respectively, we have the continuous dependence estimate
\begin{equation}
\label{uniqhigh}
\| \uu_1(t)-\uu_2(t)\|+ \| \vp_1(t)-\vp_2(t)\|\leq C 
\| \uu_{01}-\uu_{02}\|+ C\| \vp_{01}-\vp_{02}\|, \quad 
\forall \, t\in [0,T],
\end{equation}
where $C$ is a positive constant depending on $T$ and on the norms of the initial data.
\end{theorem}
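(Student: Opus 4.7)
The plan is to build the strong solution by regularizing the singular potential, obtaining a family of approximate solutions from the classical theory for regular potentials, and then establishing uniform higher-order estimates that survive in the limit. Uniqueness and continuous dependence will then follow from a reprise of the argument of Theorem \ref{weak-uniqueness} at the stronger regularity level.

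Concretely, I would replace $F$ by a $C^3$ approximation $F_\ve$ that coincides with $F$ on $[-1+\ve,1-\ve]$, is extended quadratically outside, and still satisfies $F_\ve''\geq\theta$. This yields approximate solutions $(\uu_\ve,\vp_\ve)$ associated with smooth potentials. The bulk of the work is to derive, uniformly in $\ve$, the coupled estimates $\uu_\ve\in L^\infty(0,T;\V_\sigma)\cap L^2(0,T;\W_\sigma)$, $\mu_\ve\in L^\infty(0,T;V)$ and $\vp_\ve\in L^\infty(0,T;W^{2,p}(\Omega))$. I would obtain these by testing the approximate Navier--Stokes equation with $\A\uu_\ve$ and the Cahn--Hilliard equation with $\partial_t\mu_\ve$ (equivalently, differentiating the CH equation in time and testing with $\partial_t\vp_\ve$). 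The coercive Stokes contribution $\nu_\ast\|\A\uu_\ve\|^2$ absorbs the variable-viscosity cross term $(\nu'(\vp_\ve)\nabla\vp_\ve\cdot D\uu_\ve,\A\uu_\ve)$ via \eqref{DL4} and the $H^2$-control of $\vp_\ve$ coming from Neumann elliptic regularity applied to $-\Delta\vp_\ve=\mu_\ve-F_\ve'(\vp_\ve)+\theta_0\vp_\ve$. The Korteweg term $(\mu_\ve\nabla\vp_\ve,\A\uu_\ve)$ is handled via the $V$-bound on $\mu_\ve$ and 2D Sobolev embedding; symmetrically the convective term $(\uu_\ve\cdot\nabla\vp_\ve,\partial_t\mu_\ve)$ on the CH side uses the new $L^\infty(0,T;\V_\sigma)$ bound on $\uu_\ve$. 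A Gronwall argument on the sum of the two estimates closes the bootstrap.

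A prerequisite for both estimates is a uniform separation of $\vp_\ve$ away from $\pm 1$, needed to control $F_\ve''(\vp_\ve)$. The separation at $t=0$ follows from $\mu_0\in V$ together with assumptions \eqref{H} and \eqref{Fsec}: since in dimension two $V\hookrightarrow L^p$ for every $p<\infty$, one has $F'(\vp_0)\in L^p$ for all $p$, which via \eqref{Fsec} forces a strict pointwise bound on $\vp_0$. Propagation of the separation to $[0,T]$ then follows as in \cite{A} by an Alikakos/Moser iteration on the Cahn--Hilliard equation tested against powers of $F_\ve'(\vp_\ve)$. Elliptic regularity upgrades this to the $L^\infty(0,T;W^{2,p})$ bound on $\vp_\ve$ and the $L^2(0,T;H^3)$ bound on $\mu_\ve$. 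Standard Aubin--Lions compactness, together with the uniform separation, allows passing to the limit $\ve\to 0$ and recovering the singular potential in the limit.

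For the continuous dependence estimate \eqref{uniqhigh}, I would redo the difference computation of Theorem \ref{weak-uniqueness} at the $\H_\sigma\times H$ level rather than at the dual level. Thanks to the strong-solution regularity ($\uu_i\in L^\infty(0,T;\V_\sigma)$ and $\vp_i\in L^\infty(0,T;W^{2,p})$), all the analogues of $\mathcal{I}_1$--$\mathcal{I}_7$ can be dominated by integrable functions of time multiplied by $\|\uu_1-\uu_2\|^2+\|\vp_1-\vp_2\|^2$, with no need for the Brézis--Gallouet logarithmic term of \eqref{finaldiffeq}, and a plain Gronwall lemma closes the estimate. The main obstacle throughout is the simultaneous presence of the singular potential and the variable viscosity: every higher-order estimate couples the $V$-norm of $\mu$, the $W^{2,p}$-norm of $\vp$ and the $\W_\sigma$-norm of $\uu$ through the cross term $\nu'(\vp)\nabla\vp$, so none of these bounds can be derived in isolation and the whole bootstrap must close on a single Gronwall inequality.
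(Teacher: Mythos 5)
Your overall architecture (regularize the potential, derive coupled higher-order estimates by testing the momentum equation with $\A\uu_\ve$ and the Cahn--Hilliard equation with $\partial_t\mu_\ve$, close with Gronwall, then prove continuous dependence at the $\H_\sigma\times H$ level without Br\'ezis--Gallouet) matches the paper's, and your treatment of \eqref{uniqhigh} is essentially the paper's Step 8. However, there are two genuine gaps in the existence part.

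First, the separation argument you propose as a ``prerequisite'' is both unnecessary and unavailable. The deduction ``$\mu_0\in V$ implies $F'(\vp_0)\in L^p$ for all $p<\infty$, which via \eqref{Fsec} forces a strict pointwise bound on $\vp_0$'' is false: membership of $F'(\vp_0)$ in every $L^p$ with $p<\infty$ does not give $F'(\vp_0)\in L^\infty$, so it does not separate $\vp_0$ from $\pm1$ (and the hypotheses of the theorem do not force such separation). A uniform separation on all of $[0,T]$ is likewise not at your disposal at this stage: the paper only obtains the \emph{instantaneous} separation property for $t\geq\tau>0$ (Theorem \ref{separazione}), and only \emph{after} the strong solution theory is in place, so invoking it here would be circular. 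The paper's higher-order estimates are designed precisely to avoid any control of $F_\ve''(\vp_\ve)$: the term $(\Psi_\ve''(\vp_\ve)\partial_t\vp_\ve,\partial_t\vp_\ve)$ is bounded from below using only the one-sided bound $\Psi_\ve''\geq-\alpha$ from \eqref{Psieps-prop} together with the interpolation $\alpha\|\partial_t\vp_\ve\|^2\leq\tfrac12\|\nabla\partial_t\vp_\ve\|^2+\tfrac{\alpha^2}{2}\|\partial_t\vp_\ve\|_\ast^2$. You should restructure your bootstrap so that no upper bound on $F_\ve''(\vp_\ve)$ is ever required; the $W^{2,p}$ regularity of $\vp$ and the $L^p$ bound on $F''(\vp)$ are then recovered \emph{a posteriori} from $\mu\in L^\infty(0,T;V)$ via the elliptic estimates of Theorem \ref{ell2}.

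Second, you do not address the initialization of the Gronwall inequality, which is the main technical obstruction. The higher-order functional contains $\|\nabla\mu_\ve(0)\|^2$ with $\mu_\ve(0)=-\Delta\vp_0+\Psi_\ve'(\vp_0)$, and this cannot be bounded uniformly in $\ve$ by $\|\mu_0\|_V$: one only knows that the \emph{combination} $-\Delta\vp_0+F'(\vp_0)$ lies in $V$, not that $F'(\vp_0)\in V$, so $\nabla\Psi_\ve'(\vp_0)=\big(F_\ve''(\vp_0)-\theta_0\big)\nabla\vp_0$ is not controlled. The paper resolves this by a separate approximation of the initial datum: truncate $\tilde\mu_0=-\Delta\vp_0+F'(\vp_0)$ at level $k$, solve the auxiliary Neumann problem \eqref{ellapp} to produce $\vp_{0,k}$ which \emph{is} strictly separated (with $\delta(k)$), and then choose $\ve<\tfrac12\delta(k)$ so that $F_\ve'=F'$ on the range of $\vp_{0,k}$ and \eqref{muk2H1} holds uniformly. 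Without this (or an equivalent device), the limit passage $\ve\to0$ does not go through, because the constants in your Gronwall estimate blow up with $\ve$.
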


Let us briefly explain some technical points of the proof of Theorem \ref{strong}. The argument relies on a-priori higher-order energy estimates in Sobolev spaces, combined with a suitable approximation of the logarithmic potential and the initial datum. More precisely, we approximate the logarithmic potential $\Psi$ by means of a family of regular potentials $\Psi_\varepsilon$ defined on the whole real line. 
Next, we need to perform a suitable cut-off procedure of the initial condition, since we cannot control immediately the norm of 
$\nabla \mu_\varepsilon(0)= \nabla(-\Delta \vp_0+\Psi_\varepsilon'(\vp_0))$ with $\nabla \mu(0)= \nabla(-\Delta \vp_0+\Psi'(\vp_0))$. To overcome this difficulty, we construct a preliminary approximation of the initial datum by exploiting the regularity theory of the Neumann problem with a logarithmic nonlinearity given in Appendix \ref{Neumann-Laplace}. Our argument differs from the one used in \cite{A}, which is based on fractional time regularity and maximal regularity of a Stokes operator with variable viscosity. 

\begin{proof}[Proof of Theorem \ref{strong}]
We divide the proof into several steps. 
\smallskip

\noindent
\textbf{1. Approximation of the logarithmic potential.}
We introduce a family of regular potentials
$ \Psi_\varepsilon $ that approximate the singular potential $\Psi$.
For any $\varepsilon \in (0,1)$, we set
\begin{equation}
\Psi_\varepsilon(z)=F_\varepsilon(z)-\frac{\theta_0}{2}z^2,\quad \forall\, z\in \mathbb{R},
\label{defF}
\end{equation}
where
\begin{equation}
F_\varepsilon(z)=
\begin{cases}
 \displaystyle{\sum_{j=0}^2 \frac{1}{j!}}
 F^{(j)}(1-\varepsilon) \left[z-(1-\varepsilon)\right]^j,
 \quad &\forall\,z\geq 1-\varepsilon,\\
 F(z),\quad & \forall\, z\in[-1+\varepsilon, 1-\varepsilon],\\
 \displaystyle{\sum_{j=0}^2
 \frac{1}{j!}} F^{(j)}(-1+\varepsilon)\left[ z-(-1+\varepsilon)\right]^j,
 \quad &\forall\, z\leq -1+\varepsilon.
 \end{cases}
 \label{defF1s}
\end{equation}
By virtue of the assumptions on $\Psi$ stated in Section \ref{2},
we infer that there exists $\varepsilon^\ast \in (0,\gamma)$ (where $\gamma$ is defined in Section \ref{2}) such that, for any  
$\varepsilon\in (0,\varepsilon^\ast]$, the approximating
function $\Psi_\varepsilon$  satisfies
$\Psi_\varepsilon \in \mathcal{C}^{2}(\mathbb{R})$ and
\begin{equation}
\label{Psieps-prop}
-\widetilde{\alpha}
\leq \Psi_\varepsilon(z), 
\quad -\alpha\leq \Psi''_\varepsilon(z)\leq L,\quad \forall \, z\in \mathbb{R},
\end{equation}
where $\widetilde{\alpha}$ is a positive constant
independent of $\varepsilon$, $\alpha$ is given by \eqref{alpha}, and $L$ is a positive 
constant that may depend on $\varepsilon$. Moreover, we have that
$\Psi_{\varepsilon}(z)\leq \Psi(z)$, for all $z\in [-1,1]$, and 
$|\Psi'_{\varepsilon}(z)| \leq |\Psi'(z)|$, for all $z\in (-1,1)$ (see, e.g., \cite{FG}). 
\smallskip

\noindent
\textbf{2. Approximation of the initial datum.}
We perform a cutoff procedure on the initial condition.
To do so,
we introduce the globally Lipschitz function 
$h_k: \mathbb{R}\rightarrow \mathbb{R}$, $k\in \mathbb{N}$, such that
\begin{equation}
\label{trunc}
h_k(z)=
\begin{cases}
-k,&z<-k, \\
z,& z\in [-k,k],\\
k,&z>k.
\end{cases}
\end{equation}
We define $\tilde{\mu}_{0,k}=h_k \circ \tilde{\mu}_0$, where 
$\tilde{\mu}_0=-\Delta \vp_0+ F'(\vp_0)$ (or equivalently 
$\tilde{\mu}_0=\mu_0+\theta_0 \vp$). Since $\tilde{\mu}_0\in V$, 
the classical result on compositions in Sobolev spaces \cite{Stampacchia} yields
$\tilde{\mu}_{0,k} \in V$, for any $k>0$, and 
$
\nabla \tilde{\mu}_{0,k}=\nabla \tilde{\mu}_0 \cdot\chi_{[-k,k]} (\tilde{\mu}_0),
$
which, in turn, gives 
\begin{equation}
\label{mukH1}
\|\tilde{\mu}_{0,k}\|_V\leq \| \tilde{\mu}_0\|_V.
\end{equation}
For $k\in \mathbb{N}$, we consider the Neumann problem
\begin{equation}
\label{ellapp}
\begin{cases}
-\Delta \vp_{0,k}+F'(\vp_{0,k})=\tilde{\mu}_{0,k},\quad &\text{ in }\Omega,\\
\partial_\n \vp_{0,k}=0, \quad &\text{ on }\partial \Omega.
\end{cases}
\end{equation}
Thanks to Lemma \ref{existenceNP}, there exists a 
unique solution to \eqref{ellapp} such that 
$\vp_{0,k}\in H^2(\Omega)$, 
$F'(\vp_{0,k}) \in H$, 
which satisfies \eqref{ellapp} almost everywhere in $\Omega$ and 
$\partial_\n \vp_{0,k}=0$ almost everywhere on $\partial \Omega$.
In addition, by \eqref{estbase} and \eqref{mukH1}, we have
\begin{equation}
\label{psikH2}
\| \vp_{0,k}\|_{H^2(\Omega)}\leq C(1+\| \tilde{\mu}_{0}\|).
\end{equation}
Since $\tilde{\mu}_{0,k}\rightarrow \tilde{\mu_0}$ in $H$,  Lemma \ref{existenceNP} also entails that 
$\vp_{0,k} \rightarrow \vp_0$ in $V$. As a consequence, there exist $\tilde{m}\in (0,1)$, which is independent of $k$, and $\overline{k}$ sufficiently large such that 
\begin{equation}
\label{psiH1}
\| \vp_{0,k}\|_V \leq 1+\| \vp_0\|_V,
\quad
|\overline{\vp}_{0,k}|\leq \tilde{m}<1,
\quad \forall \, k> \overline{k}.
\end{equation}
In addition, by Theorem \ref{ell2} with 
$f=\tilde{\mu}_{0k}$, we obtain 
$$
\| F'(\vp_{0,k})\|_{L^{\infty}(\Omega)}
\leq \| \tilde{\mu}_{0,k}\|_{L^{\infty}(\Omega)}\leq k.
$$
As a byproduct, there 
exists $\delta=\delta(k)>0$ such that
\begin{equation}
\label{psikinf}
\| \vp_{0,k}\|_{L^\infty(\Omega)}\leq 1-\delta.
\end{equation}
At this point, since $F'(\vp_{0,k})\in V$, it is easily seen that $\vp_{0,k}\in H^3(\Omega)$. 
Finally, for any $\varepsilon\in (0,\overline{\varepsilon})$, where 
$\overline{\varepsilon}=\min \lbrace \frac12 \delta(k), 
\varepsilon^\ast\rbrace$, since $F(z)=F_\varepsilon(z)$ for all $z\in [-1+\varepsilon,1-\varepsilon]$,
we infer from \eqref{psikinf} that
$
-\Delta \vp_{0,k}+
F'_\varepsilon(\vp_{0,k})=\tilde{\mu}_{0,k}, 
$
which entails 
\begin{equation}
\label{muk2H1}
\|-\Delta \vp_{0,k}+
F'_\varepsilon(\vp_{0,k})\|_V\leq \| \tilde{\mu}_0\|_V.
\end{equation}
\smallskip

\noindent
\textbf{3. Approximating problems.} 
Let us introduce the Galerkin scheme. We consider the family of 
eigenfunctions $\lbrace w_j\rbrace_{j\geq 1}$ of the homogeneous Neumann operator $A_1=-\Delta+I$ (see Appendix \ref{Neumann-Laplace}) and the family of eigenfunctions 
$\lbrace \ww_j\rbrace_{j\geq 1}$ of the Stokes operator $\A$ (see Appendix \ref{stokesappendix}). In 
particular, we recall that $w_1=1$ while any $w_i$, $i>1$, 
is non-constant with $\overline{w}_i=0$.  
For any integer $n\geq 1$, we define the finite-dimensional subspaces of $V$ and $\V_\sigma$, respectively, by 
$ V_n=\text{span}\lbrace w_1,...,w_n\rbrace $ and 
$\V_n= \text{span}\lbrace \ww_1,...,\ww_n\rbrace$.
We denote by $\Pi_n$ and $P_n$ the orthogonal projections on $V_n$ and $\V_n$ 
with respect to the inner product in $H$ and in $\H_\sigma$, respectively. 
We consider the approximating sequences
\begin{equation}
\label{sepvar}
\uu_{k,\varepsilon}^n(x,t)=\sum_{i=1}^{n}g_{i}(t)\ww_i(x),  \quad 
\vp_{k,\varepsilon}^n(x,t)= \sum_{i=1}^{n}k_{i}(t)w_i(x), 
\quad 
\mu_{k,\varepsilon}^n(x,t)= \sum_{i=1}^{n}l_{i}(t)w_i(x),
\end{equation}
solutions of the following approximating system
\begin{align}
\label{e1app}
& \l \partial_t \uu_{k,\varepsilon}^n, \vv\r 
+ b(\uu_{k,\varepsilon}^n,\uu_{k,\varepsilon}^n,\vv)
+ (\nu(\vp_{k,\varepsilon}^n) 
D \uu_{k,\varepsilon}^n,D \vv)
= ( \mu_{k,\varepsilon}^n
\nabla \vp_{k,\varepsilon}^n,\vv),
 \quad &\forall \, \vv \in \V_n&, \\ 
\label{e2app}
&\l \partial_t \vp_{k,\varepsilon}^n,v\r
+ ( \uu_{k,\varepsilon}^n\cdot 
\nabla \vp_{k,\varepsilon}^n,v) 
+ (\nabla \mu_{k,\varepsilon}^n, \nabla v )=0, \quad &\forall \, v \in V_n&, 
\end{align}
where
\begin{equation}
\label{muapp}
\mu_{k,\varepsilon}^n= \Pi_n \big(-\Delta \vp_{k,\varepsilon}^n 
+\Psi_\varepsilon'(\vp_{k,\varepsilon}^n)\big).
\end{equation} 
The initial conditions are defined as 
\begin{equation}
\label{Approxinitcond}
\uu_{k,\varepsilon}^n(0)=P_n \uu_0
\quad \text{and}\quad \vp_{k,\varepsilon}^n(0)=\Pi_n\vp_{0,k}.
\end{equation}
Let us notice that $\vp_{0,k}\in H^3(\Omega)$ with $\partial_\n \vp_{0,k}=0$ 
on $\partial \Omega$. Since $D(A_1^{\frac32})= \lbrace u \in H^3(\Omega): \partial_\n u=0 \text{ on } \partial \Omega \rbrace$, we have that $\vp_{k,\varepsilon}^n(0) 
\rightarrow \vp_{0,k}$ in 
$H^3(\Omega)$ as $n\rightarrow \infty$. In turn, this gives
$\vp_{k,\varepsilon}^n(0) \rightarrow \vp_{0,k}$ in 
$L^\infty(\Omega)$. Hence, there exist $\overline{m}$, with 
$\tilde{m}<\overline{m}<1$ (independent of $n$),
and $\overline{n}$ such that
\begin{equation}
\label{Linfdatvare}
|\overline{\vp}_{k,\varepsilon}^n(0)|\leq \overline{m}, \quad 
\| \vp_{k,\varepsilon}^n(0)\|_{L^\infty(\Omega)}\leq 1-\frac12\delta(k), \quad \forall \, n>\overline{n}.
\end{equation}
On account of Steps $1$ and $2$, for any $k >\overline{k}$, we fix 
$\varepsilon\in (0, \overline{\varepsilon})$ with 
$\overline{\varepsilon}$ depending on $k$, 
and $n>\overline{n}$ with $\overline{n}$ depending on $k$.
The existence of a sequence of functions 
$\uu_{k,\varepsilon}^n,\vp_{k,\varepsilon}^n$ and $\mu_{k,\varepsilon}^n$ of the form 
\eqref{sepvar} which satisfy \eqref{e1app}-\eqref{Approxinitcond} for any $t \in [0,T]$ 
can be proved in a standard way (see, e.g., \cite{Temam}). 
In particular, the system \eqref{e1app}-\eqref{Approxinitcond} is equivalent to a Cauchy problem for a nonlinear system of ordinary differential equations in the unknowns $g_i$, $k_i$ and $l_i$, $i=1,...,n$. Thanks to the Cauchy-Lipschitz theorem, for any $n>\overline{n}$, there exists a unique maximal solution to this system defined on some interval $[0,t_n]$. Moreover, by the energy estimates we shall prove in the next step (cf. \eqref{dissest1app}), it is clear that $t_n=T$.
\smallskip

\noindent
\textbf{4. Energy estimates.}
\noindent
Let us recall the above choices of the parameters, namely for any $k >\overline{k}$, we fix 
$\varepsilon\in (0, \overline{\varepsilon})$ 
and $n>\overline{n}$.
We now show uniform energy estimates with respect to the approximating parameters $k$, $\varepsilon$ and $n$.
In particular, $c_i$, $i\in \mathbb{N}$, denotes a positive constant, which depends on the parameters of the system, the constants arising from embedding and interpolation results and the energy $\E(\uu_0,\vp_0)$, but is independent of the approximation parameters $k$, $\varepsilon$ and $n$.

First, by taking $v=1$ in \eqref{e2app}, we have 
$|\overline{\vp}_{k,\varepsilon}^n(t)|=|\overline{\vp}_{k,\varepsilon}^n(0)|\leq \overline{m}$, for all $t \geq [0,T]$.
We introduce the approximated energy
$$
\E_\varepsilon(\vv,\psi)=\frac12\| \vv\|^2+\frac12 \| \nabla \psi\|^
2+ \int_\Omega \Psi_\varepsilon(\psi)\, \d x.
$$
In light of \eqref{psiH1}, \eqref{Linfdatvare} and $\Psi_{\varepsilon}(z)\leq \Psi(z)$, for all $z\in [-1,1]$, we deduce that
\begin{align}
\label{E0}
\E_\varepsilon(\uu_{k,\varepsilon}^n(0),\vp_{k,\varepsilon}^n(0))
&=\frac12\| P_n \uu_0\|^2+ 
\frac12 \| \nabla \Pi_n \vp_{0,k}\|^2+ \int_{\Omega} \Psi_\varepsilon(\vp_{k,\varepsilon}^n(0)) \, \d x\notag\\
&\leq \frac12 \| \uu_0\|^2+ \frac12 \| \vp_{0}\|_V^2+C.
\end{align}
Here we have used that $\Psi$ is bounded on $[-1,1]$.
Taking $\vv=\uu_{k,\varepsilon}^n$ in \eqref{e1app}, 
$v =\mu_{k,\varepsilon}^n$ in \eqref{e2app}, 
multiplying \eqref{muapp} by $\partial_t 
\vp_{k,\varepsilon}^n$, and summing up the 
resulting equations, we find 
\begin{equation}
\label{eneqappr}
\ddt \E_\varepsilon(\uu_{k,\varepsilon}^n,
\vp_{k,\varepsilon}^n)
+ \|\sqrt{\nu(\vp_{k,\varepsilon}^n)} 
D\uu_{k,\varepsilon}^n \|^2
+\| \nabla \mu_{k,\varepsilon}^n\|^2  = 0,
\end{equation}
for almost every $t\in (0,T)$. 
Owing to the Korn inequality and \eqref{E0}, after an 
integration in time, we have
\begin{equation}
\label{dissest1app}
\E_\varepsilon(\uu_{k,\varepsilon}^n(t),\vp_{k,\varepsilon}^n(t)) 
+  \int_0^t \Big( \nu_\ast \| \nabla \uu_{k,\varepsilon}^n(s)\|^2
+\| \nabla \mu_{k,\varepsilon}^n(s)\|^2 \Big)
\, \d s \leq c_0, \quad \forall \, t \in [0,T].
\end{equation}
In particular, by using \eqref{Psieps-prop}, we have 
\begin{equation}
\label{uniformapp}
 \| \uu_{k,\varepsilon}^n (t) \|+ \| \vp_{k,\varepsilon}^n(t)\|_V \leq c_1, \quad \forall \, t \in [0,T].
\end{equation}
In order to find an estimate on $\| \mu_{k,\varepsilon}^n\|_V$, we recall the inequality (see, e.g., \cite[Proposition A.1]{MZ})  
$$
\| \Psi_\varepsilon(\vp_{k,\varepsilon}^n)\|_{L^1(\Omega)}
\leq c_2 \Big( 1+ ( \Psi_\varepsilon'(\vp_{k,\varepsilon}^n), 
\vp_{k,\varepsilon}^n-\overline{\vp}_{k,\varepsilon}^n) \Big),
$$
where $c_2$ depends on $\overline{m}$.
Testing \eqref{muapp} by $\vp_{k,\varepsilon}^n- \overline{\vp}_{k,\varepsilon}^n$, we 
obtain
\begin{align*}
\| \nabla \vp_{k,\varepsilon}^n\|^2+ ( \Psi_\varepsilon'(\vp_{k,\varepsilon}^n), 
\vp_{k,\varepsilon}^n-\overline{\vp}_{k,\varepsilon}^n)
=(\mu_{k,\varepsilon}^n-\overline{\mu}_{k,\varepsilon}^n, \vp_{k,\varepsilon}^n-\overline{\vp}_{k,\varepsilon}^n).
\end{align*}
Thus, by the Poincar\'{e} inequality and \eqref{uniformapp}, we have
$$
( \Psi_\varepsilon'(\vp_{k,\varepsilon}^n), 
\vp_{k,\varepsilon}^n-\overline{\vp}_{k,\varepsilon}^n)
\leq c_3 \| \nabla \mu_{k,\varepsilon}^n\|.
$$ 
Accordingly, since 
$|\overline{\mu}_{k,\varepsilon}|= |\overline{\Psi'(\vp_{k,\varepsilon}^n)}|$, we learn that
\begin{equation}
\label{dissest3app}
\| \mu_{k,\varepsilon}^n\|_V
\leq c_4(1+\| \nabla \mu_{k,\varepsilon}^n\|).
\end{equation}
Next, testing \eqref{muapp} by 
$-\Delta \vp_{k,\varepsilon}^n$ and integrating by parts, 
we get
$$
\| \Delta \vp_{k,\varepsilon}^n\|^2
+ (\Psi_\varepsilon''(\vp_{k,\varepsilon}^n)
\nabla \vp_{k,\varepsilon}^n,\nabla \vp_{k,\varepsilon}^n)
= (\nabla \mu_{k,\varepsilon}^n, \nabla \vp_{k,\varepsilon}^n).
$$
By using \eqref{Psieps-prop} and \eqref{uniformapp}, we deduce that
\begin{equation}
\label{vpapprH2}
\| \vp_{k,\varepsilon}^n\|_{H^2(\Omega)}^2 \leq c_5 ( 1+ \|\nabla \mu_{k,\varepsilon}^n \|).
\end{equation}
On the other hand, by comparison in \eqref{e1app} and in \eqref{e2app} and by exploiting \eqref{LADY}, \eqref{uniformapp} and \eqref{dissest3app}, we infer that
\begin{equation}
\label{utappr}
\| \partial_t \uu_{k,\varepsilon}^n\|_{\V_\sigma'}\leq 
c_6 (1+ \| \nabla \uu_{k,\varepsilon}^n\| + \| \nabla \mu_{k,\varepsilon}^n\| ),
\end{equation}
and
\begin{equation}
\label{vptappr}
\| \partial_t \vp_{k,\varepsilon}^n\|_{\ast}
\leq c_7 (\| \nabla \uu_{k,\varepsilon}^n\|
+ \| \nabla \mu_{k,\varepsilon}^n\|).
\end{equation}
In light of the above estimates \eqref{dissest1app}-\eqref{vptappr}, we have
\begin{align*}
& \uu_{k,\varepsilon}^n \ \text{is uniformly bounded in }L^{\infty}(0,T;\H_\sigma)\cap L^2(0,T;\V_\sigma)\cap
H^1(0,T;\V_\sigma'),\\
& \varphi_{k,\varepsilon}^n \ 
\text{is uniformly bounded in } L^\infty(0,T;V)\cap L^4(0,T;H^2(\Omega))
\cap H^1(0,T;V'),\\
& \mu_{k,\varepsilon}^n \ \text{is uniformly bounded in }L^2(0,T;V),
\end{align*}
with respect to the parameters $k$, $\varepsilon$ and $n$.
\smallskip

\noindent
\textbf{5. Higher-order energy estimates.}
We are now in position to prove uniform higher-order Sobolev estimates.
We will denote by $c'_i$, $i\in \mathbb{N}$, a positive constant, which depends on the parameters of the system, the constants arising from embedding and interpolation results, and $\E(\uu_0,\vp_0)$, but are independent of the approximation parameters $k$, $\varepsilon$ and $n$ and of the norms $\| \uu_0\|_{\V_\sigma}$ and $\| \mu_0\|_V$.

Taking $v = \partial_t \mu_{k,\varepsilon}^n$ in \eqref{e2app}, we obtain
$$
\frac{1}{2} \ddt \|\nabla \mu_{k,\varepsilon}^n\|^2+ 
( \partial_t \mu_{k,\varepsilon}^n,\partial_t\vp_{k,\varepsilon}^n)
+(\partial_t \mu_{k,\varepsilon}^n, \uu_{k,\varepsilon}^n\cdot \nabla \vp_{k,\varepsilon}^n)=0.
$$
Since $\overline{\partial_t \vp_{k,\varepsilon}^n}(t)=0$ for all $t\in [0,T]$, we have
$$
\alpha \| \partial_t \vp_{k,\varepsilon}^n\|^2
\leq \frac12 \|\nabla \partial_t \vp_{k,\varepsilon}^n\|^2
+ \frac{\alpha^2}{2} \| \partial_t \vp_{k, \varepsilon}^n\|_{\ast}^2.
$$
Then, we infer from the assumptions on $\Psi_\varepsilon$ that
\begin{align*}
( \partial_t \mu_{k,\varepsilon}^n, 
\partial_t \vp_{k,\varepsilon}^n) &= 
\|\nabla \partial_t \vp_{k,\varepsilon}^n\|^2
+ (\Psi_\varepsilon''(\vp_{k,\varepsilon}^n)
\partial_t \vp_{k,\varepsilon}^n,  \partial_t \vp_{k,\varepsilon}^n)\\
&\geq  \|\nabla \partial_t \vp_{k,\varepsilon}^n\|^2
- \alpha \| \partial_t \vp_{k,\varepsilon}^n\|^2\\
&\geq \frac12 \|\nabla \partial_t \vp_{k,\varepsilon}^n\|^2
- \frac{\alpha^2}{2} \| \partial_t \vp_{k,\varepsilon}^n\|_{\ast}^2.
\end{align*}
Besides, we observe that
\begin{align*}
( \partial_t \mu_{k,\varepsilon}^n, 
\uu_{k,\varepsilon}^n \cdot 
\nabla \vp_{k,\varepsilon}^n )
&=\ddt \Big[ (\uu_{k,\varepsilon}^n
\cdot \nabla \vp_{k,\varepsilon}^n,
\mu_{k,\varepsilon}^n ) \Big] \\
&\quad -(\partial_t \uu_{k,\varepsilon}^n \cdot 
\nabla \vp_{k,\varepsilon}^n,
\mu_{k,\varepsilon}^n )
-(\uu_{k,\varepsilon}^n\cdot\nabla 
\partial_t \vp_{k,\varepsilon}^n,
\mu_{k,\varepsilon}^n).
\end{align*}
By \eqref{dissest3app}, we get
\begin{align*}
(\mu_{k,\varepsilon}^n, 
\uu_{k,\varepsilon}^n\cdot \nabla \partial_t \vp_{k,\varepsilon}^n)
&\leq \| \uu_{k,\varepsilon}^n\|_{\mathbf{L}^3(\Omega)}
\|\nabla\partial_t \vp_{k,\varepsilon}^n\|
 \| \mu_{k,\varepsilon}^n\|_{L^6(\Omega)}\\
&\leq \frac14\| \nabla \partial_t\vp_{k,\varepsilon}^n\|^2
+ c'_1 \| \uu_{k,\varepsilon}^n\|_{\mathbf{L}^3(\Omega)}^2
(1+ \|\nabla \mu_{k,\varepsilon}^n\|^2).
\end{align*}
Accordingly, by using \eqref{vptappr}, we arrive at
\begin{align}
\label{diffineq1app}
\ddt \Big[ &\frac12 \| \nabla \mu_{k,\varepsilon}^n\|^2
+ ( \uu_{k,\varepsilon}^n\cdot 
\nabla \vp_{k,\varepsilon}^n,
\mu_{k,\varepsilon}^n)
\Big]
+ \frac14 \| \nabla 
\partial_t \vp_{k,\varepsilon}^n\|^2 \notag\\
&\leq (\partial_t \uu_{k,\varepsilon}^n \cdot \nabla \vp_{k,\varepsilon}^n, \mu_{k,\varepsilon}^n)+ 
c'_2 (1+\| \uu_{k,\varepsilon}^n\|_{\mathbf{L}^3(\Omega)}^2)
(1+ \| \nabla \uu_{k,\varepsilon}^n\|^2+\|\nabla \mu_{k,\varepsilon}^n\|^2).
\end{align}
Taking $\vv =\partial_t \uu_{k,\varepsilon}^n$ in \eqref{e1app}, we have
$$
\| \partial_t \uu_{k,\varepsilon}^n\|^2+
b(\uu_{k,\varepsilon}^n, \uu_{k,\varepsilon}^n,\partial_t \uu_{k,\varepsilon}^n)
-(\mathrm{div }\, (\nu(\vp_{k,\varepsilon}^n)D\uu_{k,\varepsilon}^n), \partial_t \uu_{k,\varepsilon}^n)
= (\mu_{k,\varepsilon}^n \nabla \vp_{k,\varepsilon}^n, \partial_t \uu_{k,\varepsilon}^n).
$$
By \eqref{LADY}, \eqref{H2equiv}, \eqref{ipo-nu} and \eqref{uniformapp}, we deduce that
\begin{align*}
b(\uu_{k,\varepsilon}^n, \uu_{k,\varepsilon}^n,
\partial_t \uu_{k,\varepsilon}^n)&\leq 
\| \uu_{k,\varepsilon}^n\|_{\mathbf{L}^4(\Omega)}
 \| \nabla  \uu_{k,\varepsilon}^n\|_{\mathbf{L}^4(\Omega)} 
\|\partial_t \uu_{k,\varepsilon}^n \|\\
&\leq \sqrt{c_1} C \| \nabla  \uu_{k,\varepsilon}^n \| 
\| \A \uu_{k,\varepsilon}^n\|^{\frac12} 
\|\partial_t \uu_{k,\varepsilon}^n \|\\
&\leq \frac16  \|\partial_t \uu_{k,\varepsilon}^n \|^2+ 
c'_3 \Big( \| \A \uu_{k,\varepsilon}^n\|^2+ \| \nabla  \uu_{k,\varepsilon}^n \|^4 \Big),
\end{align*}
and
\begin{align*}
(\mathrm{div }\, (\nu(\vp_{k,\varepsilon}^n)
D\uu_{k,\varepsilon}^n), 
\partial_t \uu_{k,\varepsilon}^n)
&=\frac12( \nu(\vp_{k,\varepsilon}^n) \Delta \uu_{k,\varepsilon}^n,  \partial_t \uu_{k,\varepsilon}^n)+
(\nu'(\vp_{k,\varepsilon}^n)
  D \uu_{k,\varepsilon}^n \nabla \vp_{k,\varepsilon}^n, 
\partial_t \uu_{k,\varepsilon}^n)\\
&\leq  C \| \A \uu_{k,\varepsilon}^n\| \|\partial_t \uu_{k,\varepsilon}^n \|
+ C \| \nabla \vp_{k,\varepsilon}^n\|_{\mathbf{L}^4(\Omega)}
\| D\uu_{k,\varepsilon}^n\|_{\mathbf{L}^4(\Omega)} 
\|\partial_t \uu_{k,\varepsilon}^n \|\\
&\leq \frac16  \|\partial_t \uu_{k,\varepsilon}^n \|^2+
C \| \A \uu_{k,\varepsilon}^n\|^2
+ c_1 C\| \vp_{k,\varepsilon}^n\|_{H^2(\Omega)}
\| \nabla \uu_{k,\varepsilon}^n\| 
\|\A \uu_{k,\varepsilon}^n \|\\
&\leq \frac16  \|\partial_t \uu_{k,\varepsilon}^n \|^2+
c'_4 \Big( \| \A \uu_{k,\varepsilon}^n\|^2
+ \| \vp_{k,\varepsilon}^n\|_{H^2(\Omega)}^2
\| \nabla \uu_{k,\varepsilon}^n\|^2 \Big).
\end{align*}
By \eqref{dissest3app}, we have
\begin{align*}
(\mu_{k,\varepsilon}^n\nabla \vp_{k,\varepsilon}^n, 
\partial_t \uu_{k,\varepsilon}^n)
&\leq \| \mu_{k,\varepsilon}^n\|_{L^6(\Omega)}
\| \nabla \vp_{k,\varepsilon}^n\|_{\mathbf{L}^3(\Omega)}
\| \partial_t \uu_{k,\varepsilon}^n\| \\
&\leq  \frac16  \|\partial_t \uu_{k,\varepsilon}^n \|^2+
c'_5 \| \vp_{k,\varepsilon}^n\|_{H^2(\Omega)}^2 
(1+\| \nabla \mu_{k,\varepsilon}^n\|^2).
\end{align*}
Hence, we find 
\begin{align}
\label{ut}
\| \partial_t \uu_{k,\varepsilon}^n\|^2
&\leq c'_6 \Big( \| \A \uu_{k,\varepsilon}^n\|^2+
 \| \nabla  \uu_{k,\varepsilon}^n \|^4  
+ \| \vp_{k,\varepsilon}^n\|_{H^2(\Omega)}^2 (1+
\| \nabla \uu_{k,\varepsilon}^n\|^2 +\| \nabla \mu_{k,\varepsilon}^n\|^2) \Big).
\end{align}
Because of \eqref{sepvar} and \eqref{dissest1app}, we deduce that $g_i\in L^2(0,T)$, for all $i=1,...,n$, and $\uu_{k,\varepsilon}^n\in L^2(0,T;D(\A))$, which implies that $\A \uu_{k,\varepsilon}^n \in L^2(0,T;\H_\sigma)$.
By the theory of the Stokes operator (see Appendix \ref{stokesappendix}), there exists 
$p_{k,\varepsilon}^n\in L^2(0,T;V)$ 
such that
$-\Delta \uu_{k,\varepsilon}^n+\nabla p_{k,\varepsilon}^n= \A \uu_{k,\varepsilon}^n$ almost everywhere in $\Omega \times (0,T)$. 
In particular, we have
\begin{equation}
\label{pressure2}
\| p_{k,\varepsilon}^n\| \leq C \|\nabla \uu_{k,\varepsilon}^n \|^\frac12 \| \A \uu_{k,\varepsilon}^n\|^\frac12, \quad 
\| p_{k,\varepsilon}^n\|_{V}\leq C \|\A \uu_{k,\varepsilon}^n \|,
\end{equation}
where $C$ is independent of $k$, $\varepsilon$ and $n$.
Now we take $\vv=\A \uu_{k,\varepsilon}^n$ in \eqref{e1app} and we obtain
$$
\frac12 \ddt \| \nabla \uu_{k,\varepsilon}^n\|^2+ 
b(\uu_{k,\varepsilon}^n, \uu_{k,\varepsilon}^n,
\A \uu_{k,\varepsilon}^n)
-(\mathrm{div }\, (\nu(\vp_{k,\varepsilon}^n)
D\uu_{k,\varepsilon}^n),\A \uu_{k,\varepsilon}^n) 
= (\mu_{k,\varepsilon}^n \nabla \vp_{k,\varepsilon}^n,
 \A \uu_{k,\varepsilon}^n).
$$
We observe that
\begin{align*}
-(\mathrm{div }\, (\nu(\vp_{k,\varepsilon}^n)
D\uu_{k,\varepsilon}^n),\A \uu_{k,\varepsilon}^n)
&=-\frac12( \nu(\vp_{k,\varepsilon}^n)
 \Delta \uu_{k,\varepsilon}^n, 
\A \uu_{k,\varepsilon}^n )-
(\nu'(\vp_{k,\varepsilon}^n) 
 D \uu_{k,\varepsilon}^n \nabla \vp_{k,\varepsilon}^n, 
\A \uu_{k,\varepsilon}^n)\\
&=\frac12( \nu(\vp_{k,\varepsilon}^n) 
\A \uu_{k,\varepsilon}^n, \A \uu_{k,\varepsilon}^n)
-(\nu(\vp_{k,\varepsilon}^n) \nabla p_{k,\varepsilon}^n,
\A \uu_{k,\varepsilon}^n)\\
&\quad -(\nu'(\vp_{k,\varepsilon}^n) 
 D \uu_{k,\varepsilon}^n \nabla \vp_{k,\varepsilon}^n, 
\A \uu_{k,\varepsilon}^n)\\
&\geq \nu_\ast \|\A \uu_{k,\varepsilon}^n\|^2
+(\nu'(\vp_{k,\varepsilon}^n) \nabla \vp_{k,\varepsilon}^n p_{k,\varepsilon}^n,\A \uu_{k,\varepsilon}^n)\\
&\quad -(\nu'(\vp_{k,\varepsilon}^n)  D \uu_{k,\varepsilon}^n \nabla \vp_{k,\varepsilon}^n, 
\A \uu_{k,\varepsilon}^n).
\end{align*}
By \eqref{LADY}, \eqref{uniformapp} and \eqref{pressure2}, we have the following estimates
\begin{align*}
-(\nu'(\vp_{k,\varepsilon}^n) \nabla 
&\vp_{k,\varepsilon}^n p_{k,\varepsilon}^n,\A \uu_{k,\varepsilon}^n)
 +(\nu'(\vp_{k,\varepsilon}^n)  D \uu_{k,\varepsilon}^n \nabla \vp_{k,\varepsilon}^n, 
\A \uu_{k,\varepsilon}^n)\\
&\leq C \| \nabla \vp_{k,\varepsilon}^n\|_{\mathbf{L}^4(\Omega)} 
\Big( \| p_{k,\varepsilon}^n\|_{L^4(\Omega)}+ 
\| D \uu_{k,\varepsilon}^n\|_{\mathbf{L}^4(\Omega)} \Big)
\|\A \uu_{k,\varepsilon}^n \| \\
&\leq \sqrt{c_1} C \| \vp_{k,\varepsilon}^n\|_{H^2(\Omega)}^\frac12 
\Big( \| p_{k,\varepsilon}^n\|^\frac12 
\| p_{k,\varepsilon}^n\|_V^\frac12 +
\| \nabla \uu_{k,\varepsilon}^n\|^\frac12 \| \A \uu_{k,\varepsilon}^n\|^\frac12 \Big)
\|\A \uu_{k,\varepsilon}^n \| \\
&\leq \sqrt{c_1} C \| \vp_{k,\varepsilon}^n\|_{H^2(\Omega)}^\frac12
\Big( \|\nabla \uu_{k,\varepsilon}^n \|^\frac14 
\|\A \uu_{k,\varepsilon}^n\|^\frac34+ \| \nabla \uu_{k,\varepsilon}^n\|^\frac12 \| \A \uu_{k,\varepsilon}^n\|^\frac12 \Big) \| \A \uu_{k,\varepsilon}^n\|\\
&\leq \frac{\nu_\ast}{6} \|\A \uu_{k,\varepsilon}^n \|^2 +
c'_7 (1+\| \vp_{k,\varepsilon}^n\|_{H^2(\Omega)}^4)
\| \nabla \uu_{k,\varepsilon}^n\|^2,
\end{align*}
and
\begin{align*}
b(\uu_{k,\varepsilon}^n, \uu_{k,\varepsilon}^n,\A \uu_{k,\varepsilon}^n)
&\leq \| \uu_{k,\varepsilon}^n\|_{\mathbf{L}^4(\Omega)}
\| \nabla \uu_{k,\varepsilon}^n\|_{\mathbf{L}^4(\Omega)}
\| \A \uu_{k,\varepsilon}^n\| \\
&\leq \frac{\nu_\ast}{6} \|\A \uu_{k,\varepsilon}^n\|^2 + 
c'_8 \| \nabla  \uu_{k,\varepsilon}^n\|^4.
\end{align*}
Also, we have
\begin{align*}
(\mu_{k,\varepsilon}^n \nabla \vp_{k,\varepsilon}^n, \A \uu_{k,\varepsilon}^n)
&\leq \|\mu_{k,\varepsilon}^n\|_{L^6(\Omega)} 
\|\nabla \vp_{k,\varepsilon}^n\|_{\mathbf{L}^3(\Omega)} 
\| \A \uu_{k,\varepsilon}^n\|\\
&\leq  \frac{\nu_\ast}{6} \| \A \uu_{k,\varepsilon}^n\|^2
+c'_9 \| \vp_{k,\varepsilon}^n\|_{H^2(\Omega)}^2
 (1+\| \nabla \mu_{k,\varepsilon}^n\|^2).
\end{align*}
Hence, we are led to
\begin{align}
\label{diffineq2app}
\frac12 &\ddt \| \nabla \uu_{k,\varepsilon}^n\|^2+ 
\frac{\nu_\ast}{2}  \| \A \uu_{k,\varepsilon}^n\|^2\notag \\
&\leq c'_{10} \Big( \| \nabla  \uu_{k,\varepsilon}^n\|^4
+(1+\| \vp_{k,\varepsilon}^n\|_{H^2(\Omega)}^4)
\| \nabla \uu_{k,\varepsilon}^n\|^2 
+\|\vp_{k,\varepsilon}^n\|_{H^2(\Omega)}^2
 (1+\| \nabla \mu_{k,\varepsilon}^n\|^2) \Big).
\end{align}
Multiplying \eqref{ut} by $\varpi =\frac{\nu_\ast}{4c'_6}>0$ 
and summing up with \eqref{diffineq2app}, we arrive at 
\begin{align}
\label{diffineq2app2}
\frac12 &\ddt \| \nabla \uu_{k,\varepsilon}^n\|^2+ 
\frac{\nu_\ast}{4}  \| \A \uu_{k,\varepsilon}^n\|^2+
\varpi \| \partial_t \uu_{k,\varepsilon}^n\|^2 \notag \\
&\leq c'_{11} \Big( \| \nabla  \uu_{k,\varepsilon}^n\|^4+
(1+\| \vp_{k,\varepsilon}^n\|_{H^2(\Omega)}^4)
 (1+\| \nabla \uu_{k,\varepsilon}^n\|^2+\| \nabla \mu_{k,\varepsilon}^n\|^2) \Big).
\end{align}
Adding \eqref{diffineq1app} and \eqref{diffineq2app2}, we find the differential inequality
\begin{align}
\label{diffineq3app}
\ddt &\Lambda(\uu_{k,\varepsilon}^n,
\vp_{k,\varepsilon}^n)
+ \frac{\nu_\ast}{4} \|\A \uu_{k,\varepsilon}^n\|^2
+\frac{\varpi}{2} \|\partial_t \uu_{k,\varepsilon}^n\|^2
+ \frac14 \| \nabla \partial_t \vp_{k,\varepsilon}^n\|^2 \notag\\
&\leq  (\partial_t \uu_{k,\varepsilon}^n\cdot \nabla \vp_{k,\varepsilon}^n, \mu_{k,\varepsilon}^n)+ 
c'_{12} \| \nabla \uu_{k,\varepsilon}^n\|^4 \notag \\
&\quad + c'_{12} \Big( (1+\| \uu_{k,\varepsilon}^n\|_{\mathbf{L}^3(\Omega)}^2+ \| \vp_{k,\varepsilon}^n\|_{H^2(\Omega)}^4) (
1+ \| \nabla \uu_{k,\varepsilon}^n\|^2+\| \nabla \mu_{k,\varepsilon}^n\|^2 ) \Big),
\end{align}
where 
\begin{align}
\label{Lambdadef}
\Lambda(\uu_{k,\varepsilon}^n, \vp_{k,\varepsilon}^n)
=\frac12 \| \nabla \uu_{k,\varepsilon}^n\|^2
+\frac12 \| \nabla \mu_{k,\varepsilon}^n\|^2
+( \uu_{k,\varepsilon}^n 
\cdot \nabla \vp_{k,\varepsilon}^n,
\mu_{k,\varepsilon}^n).
\end{align}
We control the first term on the right-hand side of \eqref{diffineq3app} as follows
\begin{align*}
(\partial_t \uu_{k,\varepsilon}^n 
\cdot \nabla \vp_{k,\varepsilon}^n,
\mu_{k,\varepsilon}^n)
&\leq \| \partial_t \uu_{k,\varepsilon}^n\| 
\| \nabla \vp_{k,\varepsilon}^n\|_{\mathbf{L}^3(\Omega)} 
\| \mu_{k,\varepsilon}^n\|_{L^6(\Omega)}\\
&\leq \frac{	\varpi}{4} \| \partial_t \uu_{k,\varepsilon}^n\|^2
+ C \| \nabla \vp_{k,\varepsilon}^n\|_{\mathbf{L}^3(\Omega)}^2 
\| \mu_{k,\varepsilon}^n\|_V^2\\
&\leq \frac{\varpi}{4} \| \partial_t \uu_{k,\varepsilon}^n\|^2
+ c'_{13} \| \vp_{k,\varepsilon}^n\|_{H^2(\Omega)}^2
(1+\| \nabla \mu_{k,\varepsilon}^n\|^2).
\end{align*}
Then, we arrive at
\begin{align}
\label{diffineq4app}
\ddt &\Lambda(\uu_{k,\varepsilon}^n,
\vp_{k,\varepsilon}^n)
+ \frac{\nu_\ast}{4} \|\A \uu_{k,\varepsilon}^n\|^2
+\frac{\varpi}{4} \|\partial_t \uu_{k,\varepsilon}^n\|^2
+ \frac14 \| \nabla \partial_t \vp_{k,\varepsilon}^n\|^2 \notag\\
&\leq   
c'_{14} \Big( \| \nabla \uu_{k,\varepsilon}^n\|^4 + (1+\| \uu_{k,\varepsilon}^n\|_{\mathbf{L}^3(\Omega)}^2+ \| \vp_{k,\varepsilon}^n\|_{H^2(\Omega)}^4) (
1+ \| \nabla \uu_{k,\varepsilon}^n\|^2+\| \nabla \mu_{k,\varepsilon}^n\|^2 ) \Big).
\end{align}
Now we show that $\Lambda(\uu_{k,\varepsilon}^n, 
\vp_{k,\varepsilon}^n)$ is bounded from below. 
By using \eqref{LADY} and exploiting \eqref{dissest1app}-\eqref{dissest3app}, we have
\begin{align*}
( \uu_{k,\varepsilon}^n\cdot 
\nabla \vp_{k,\varepsilon}^n,\mu_{k,\varepsilon}^n) 
&\leq \|\uu_{k,\varepsilon}^n\|_{\mathbf{L}^4(\Omega)} 
\| \nabla \vp_{k,\varepsilon}^n\| 
\| \mu_{k,\varepsilon}^n\|_{L^4(\Omega)}\\
&\leq c_1 C \| \uu_{k,\varepsilon}^n\|^{\frac12}
 \| \nabla \uu_{k,\varepsilon}^n\|^{\frac12}\
\| \mu_{k,\varepsilon}^n\|_V\\
&\leq \frac14 \| \nabla \uu_{k,\varepsilon}^n\|^2+ \frac14\| \nabla \mu_{k,\varepsilon}^n\|^2+c'_{15}.
\end{align*}
Hence, we infer that
\begin{equation}
\label{lambdabelow}
\Lambda(\uu_{k,\varepsilon}^n, \vp_{k,\varepsilon}^n) \geq \frac14 \| \nabla \uu_{k,\varepsilon}^n\|^2
+ \frac14 \| \nabla \mu_{k,\varepsilon}^n\|^2-c'_{15}.
\end{equation}
Moreover, it is easily seen that
\begin{equation}
\label{lambdaabove}
\Lambda(\uu_{k,\varepsilon}^n, \vp_{k,\varepsilon}^n) 
\leq c'_{16} \Big( 1+ \| \nabla \uu_{k,\varepsilon}^n\|^2
+ \| \nabla \mu_{k,\varepsilon}^n\|^2\Big).
\end{equation}
In summary, exploiting \eqref{vpapprH2} and the Sobolev embedding $V\hookrightarrow L^3(\Omega)$, we are led to rewrite \eqref{diffineq4app} as
\begin{align}
\label{diffineq4}
\ddt \Lambda(\uu_{k,\varepsilon}^n, \vp_{k,\varepsilon}^n) &+ \overline{\nu} \Big(\|\A \uu_{k,\varepsilon}^n\|^2
+ \| \partial_t \uu_{k,\varepsilon}^n\|^2
+ \| \nabla \partial_t \vp_{k,\varepsilon}^n\|^2 \Big)
\leq  c'_{17} \Big( 1+\Lambda^2(\uu_{k,\varepsilon}^n, \vp_{k,\varepsilon}^n)\Big),
\end{align}
where $\overline{\nu}=\frac14 \min \lbrace 1, \nu_\ast, \varpi \rbrace$.
Owing to \eqref{Psieps-prop}, \eqref{dissest1app} and \eqref{lambdaabove}, we infer that
$$
\int_0^T \Lambda(\uu_{k,\varepsilon}^n(s), \vp_{k,\varepsilon}^n(s)) \, \d s \leq c'_{18}.
$$
An application of the Gronwall lemma to \eqref{diffineq4} implies that 
\begin{equation}
\label{Lambdaest1}
\Lambda(\uu_{k,\varepsilon}^n(t), 
\vp_{k,\varepsilon}^n(t))
\leq  \Lambda(\uu_{k,\varepsilon}^n(0), 
\vp_{k,\varepsilon}^n(0)) \mathrm{e}^{c'_{18}}+ c'_{17}\mathrm{e}^{c'_{18}} T, \quad \forall \, t \in [0,T],
\end{equation}
In order to find a uniform control of the right-hand side of \eqref{Lambdaest1}, by using the Sobolev embedding $V\hookrightarrow L^6(\Omega)$, \eqref{psikH2} and \eqref{psiH1}, we obtain
\begin{align*}
\Lambda(\uu_{k,\varepsilon}^n(0), 
\vp_{k,\varepsilon}^n(0))&= 
\Lambda( P_n \uu_0, \Pi_n\vp_{0,k})\\
&= \frac12 \| \nabla P_n \uu_0\|^2
+\frac12 \| \nabla \mu_{k,\varepsilon}^n(0)\|^2
+( P_n \uu_0 
\cdot \nabla \Pi_n \vp_{0,k},
\mu_{k,\varepsilon}^n(0))\\
&\leq \frac12 \|\nabla \uu_0\|^2
+\frac12 \| \nabla \mu_{k,\varepsilon}^n(0)\|^2
+ \| P_n \uu_0\|_{\mathbf{L}^3(\Omega)} 
\| \nabla \Pi_n\vp_{0,k}\|
\|\mu_{k,\varepsilon}^n(0)\|_{L^6(\Omega)}\\
&\leq \|\nabla \uu_0\|^2+ C \big( 1+\| \vp_{0,k}\|_V^2\big)\|\mu_{k,\varepsilon}^n(0)\|_V^2\\
&\leq \|\nabla \uu_0\|^2+ C \big( 1+\| \vp_{0}\|_V^2\big)\|\mu_{k,\varepsilon}^n(0)\|_V^2.
\end{align*}
In light of \eqref{psikH2}, \eqref{psiH1}, \eqref{muk2H1} and \eqref{Approxinitcond}, we find 
\begin{align}
\| \mu_{k,\varepsilon}^n(0)&\|_V=
\| \Pi_n (-\Delta \vp_{k,\varepsilon}^n(0)+
\Psi'_\varepsilon(\vp_{k,\varepsilon}^n(0)))\|_V \notag\\
&\leq  \| -\Delta \vp_{k,\varepsilon}^n(0)+
F'_\varepsilon(\vp_{k,\varepsilon}^n(0))\|_V + \theta_0 \|\vp_{k,\varepsilon}^n(0) \|_V \notag \\
&\leq  \| -\Delta \vp_{k,\varepsilon}^n(0)
+F'_\varepsilon(\vp_{k,\varepsilon}^n(0))+ 
\Delta \vp_{0,k}-F_\varepsilon'(\vp_{0,k})\|_V
+C ( \| \tilde{\mu}_{0,k}\|_V+ \| \vp_{0}\|_V ) \notag \\
&\leq \| \vp_{k,\varepsilon}^n(0)-
\vp_{0,k}\|_{H^3(\Omega)} + 
 \| F'_\varepsilon(\vp_{k,\varepsilon}^n(0))-F'_\varepsilon(\vp_{0,k})\|_V+ C( \| \tilde{\mu}_{0}\|_V+ \| \vp_{0}\|_V ) \notag \\
&=  \| \Pi_n \vp_{0,k}-
\vp_{0,k}\|_{H^3(\Omega)} + 
\| F'_\varepsilon(\Pi_n \vp_{0,k})-F'_\varepsilon(\vp_{0,k})\|_V+ C( \| \mu_{0}\|_V+ \| \vp_{0}\|_V ). \label{estdatn1}
\end{align}
Recalling the bounds \eqref{psikinf} and \eqref{Linfdatvare} and the relation $F(z)=F_\varepsilon(z)$, for all $z\in [-1+\overline{\varepsilon},1-\overline{\varepsilon}]$ (cf. $0<\varepsilon< \overline{\varepsilon}$), we deduce that
\begin{align}
\| F'_\varepsilon(\Pi_n \vp_{0,k})&-F'_\varepsilon(\vp_{0,k})\|_V \notag \\
&\leq  \| F'_\varepsilon(\Pi_n \vp_{0,k})-F'_\varepsilon(\vp_{0,k})\| +\|F''_\varepsilon(\Pi_n \vp_{0,k}) 
\nabla(\Pi_n \vp_{0,k} -\vp_{0,k})\| \notag\\
&\quad +\| (F''_\varepsilon(\Pi_n \vp_{0,k})-
F''_\varepsilon(\vp_{0,k}))\nabla \vp_{0,k}\|\notag \\
&\leq C \Big(\max_{z\in [-1+\overline{\varepsilon},1-\overline{\varepsilon}]} |F''(z)|+ \max_{z\in [-1+\overline{\varepsilon},1-\overline{\varepsilon}]} |F'''(z)|\Big) \| \Pi_n \vp_{0,k}-
\vp_{0,k}\|_V. \label{estdatn2}
\end{align}
We notice that the quantity between brackets in \eqref{estdatn2} is finite since $F\in \mathcal{C}^3(-1,1)$, and it only depends on $k$ (cf. definition of $\overline{\varepsilon}$). Let us now recall that
$\Pi_n \vp_{0,k} \rightarrow \vp_{0,k}$ in 
$H^3(\Omega)$ as $n\rightarrow \infty$. Thus, we infer from  \eqref{estdatn1}-\eqref{estdatn2} that,
for any fixed $k>\overline{k}$ (and $\varepsilon\in (0,\overline{\varepsilon})$), 
there exists $\overline{\overline{n}}>\overline{n}$ (cf. \eqref{Approxinitcond}) such that 
\begin{equation}
\label{estdatn3}
\| \mu_{k,\varepsilon}^n(0)\|_V \leq 
C(1+ \| \mu_{0}\|_V+ \| \vp_{0}\|_V ), \quad \forall \, n >\overline{\overline{n}},
\end{equation}
where $C$ is independent of $k$, $n$ and $\varepsilon$.
Finally, for any fixed $k>\overline{k}$, $\varepsilon \in (0,\overline{\varepsilon})$ and $n>\overline{\overline{n}}$ (where $\overline{\varepsilon}$ and $\overline{\overline{n}}$ depends on $k$),
we infer from \eqref{Lambdaest1} and \eqref{estdatn3} that
$$
\Lambda(\uu_{k,\varepsilon}^n(t), 
\vp_{k,\varepsilon}^n(t))
\leq C(1+\| \mu_0\|_V+\| \vp_0\|_V)\mathrm{e}^{c'_{18}}+ 
c'_{17}\mathrm{e}^{c'_{18}} T, \quad \forall \, t\in [0,T].
$$
In view of \eqref{lambdabelow}, we have
\begin{equation}
\label{unif1}
\sup_{t\in [0,T]} \| \nabla \uu_{k,\varepsilon}^n(t)\|
+\sup_{t\in [0,T]}\| \nabla \mu_{k,\varepsilon}^n(t)\|\leq \overline{C}_1,
\end{equation}
where $\overline{C}_1$ is a positive constant, which depends on $T$ and $\E(\uu_0,\vp_0)$, $\| \uu_0\|_{\V_\sigma}$ and $\| \mu_0\|_V$, but is independent of $k$, $n$ and $\varepsilon$.
Moreover, an integration in time of \eqref{diffineq4} 
on the time interval $[0,T]$ yields
\begin{equation}
\label{unif2}
\int_0^T 
\Big( \|\A \uu_{k,\varepsilon}^n(s)\|^2
+\| \partial_t \uu_{k,\varepsilon}^n(s)\|^2
+ \| \nabla \partial_t \vp_{k,\varepsilon}^n(s)\|^2\Big)
\, \d s \leq \overline{C}_2,
\end{equation}
where $\overline{C}_2$ is a positive constant depending on $T$ and on the initial datum, but independent of $k$, $\varepsilon$ and $n$.
\smallskip

\noindent
\textbf{6. Passage to the limit.} 
Thanks to the analysis performed in Step $5$, 
for any fixed $k>\overline{k}$, $\varepsilon \in (0,\overline{\varepsilon})$ and $n>\overline{\overline{n}}$, 
we deduce from \eqref{unif1} and \eqref{unif2} that
\begin{align*}
& \uu_{k,\varepsilon}^n \ \text{is uniformly bounded in }L^{\infty}(0,T;\V_\sigma)\cap L^2(0,T;\W_\sigma)\cap
H^1(0,T;\H_\sigma),\\
& \varphi_{k,\varepsilon}^n \ 
\text{is uniformly bounded in } L^\infty(0,T;H^2(\Omega))
\cap H^1(0,T;V),\\
& \mu_{k,\varepsilon}^n \ \text{is uniformly bounded in }L^\infty(0,T;V).
\end{align*}
By a standard compactness method,
we are in position to pass to the limit first as $n\rightarrow\infty$, then as $\varepsilon \rightarrow 0$ and, finally, as $k\rightarrow \infty$. 
As a result, we obtain the existence of a pair $(\uu,\vp)$ such that
\begin{align*}
&\uu \in  L^{\infty}(0,T;\V_\sigma)\cap L^{2}(0,T; \W_\sigma) \cap H^{1}(0,T; \H_\sigma),\\
&\vp \in  L^\infty(0,T; H^2(\Omega))\cap  H^1(0,T; V),\\
&\vp \in L^{\infty}(\Omega\times (0,T)),\quad\text{with}\quad |\varphi(x,t)|<1 
\ \text{a.e. }(x,t)\in \Omega\times (0,T),
\end{align*}
which satisfies \eqref{e1} and \eqref{e2}, 
where $\mu=-\Delta \varphi +\Psi'(\varphi)\in L^\infty(0,T;V)$. Moreover, $\partial_\n \varphi=0$ almost everywhere 
on $\partial\Omega\times(0,T)$, $\uu(\cdot,0)=\uu_0$ and 
$\varphi(\cdot,0)=\varphi_0$ in $\Omega$.
Since $\partial_t \vp + \uu \cdot \nabla \vp$ belongs to $L^2(0,T;V)$ owing to the above regularity properties, we infer from the classical regularity theory of the homogeneous Neumann
operator that $\mu \in L^2(0,T;H^3(\Omega))$, $\partial_n \mu=0$ almost everywhere on $\partial\Omega\times(0,T)$ and $\partial_t \vp+ \uu \cdot \nabla \vp= \Delta \mu$ holds almost everywhere in $\Omega \times (0,T)$. 
Finally, we can recover the pressure $\pi$ arguing as in \cite[Propositions 1.1 and 1.2, Chapter III]{Temam}. In particular, it is possible to show that there exists
$\pi \in L^2(0,T;V)$ such that
$\partial_t \uu+(\uu \cdot \nabla) \uu-\mathrm{div }\, (\nu(\vp)D\uu)
+\nabla \pi= \mu \nabla \vp$ holds almost everywhere in $\Omega \times (0,T)$.
\smallskip

\noindent
\textbf{7. Further regularity properties.}
From the regularity $\mu \in L^\infty(0,T;V)$, an application of 
Theorem \ref{ell2} entails that 
$\vp\in L^\infty(0,T;W^{2,p}(\Omega))$ and 
$F'(\vp)\in L^\infty(0,T;L^p(\Omega))$, for any $2\leq p<\infty$.
Furthermore, thanks to the growth condition \eqref{Fsec}, we also
deduce that 
$F''(\varphi)\in L^\infty(0,T;L^p(\Omega))$, for any $p\in (2,\infty)$.
Next, as a consequence, we prove that $\partial_t \mu$ exists and belongs to $L^2(0,T;V')$.
To this aim, given $h>0$, we denote the difference quotient of a function $f$ by
$\partial_t^h f=\frac{1}{h}\big(f(t+h)-f(t)\big)$.
For any $v \in V$ with $\| v\|_V\leq 1$, by using the boundary condition on $\vp$, we observe that
$
(\partial_t^h \mu, v)
= (\nabla \partial_t^h \vp, \nabla v)+(\partial_t^h F'(\vp),v)-\theta_0 (\partial_t^h \vp,v).
$
Since $F''$ is convex, we find the control
\begin{align}
(\partial_t^h F'(\vp),v)
&\leq \Big\| \int_0^1 \Big( s F''(\varphi(\cdot+h))+(1-s)F''(\varphi)\Big)\, \d s \Big\|_{L^3(\Omega)}
\|\partial_t^h\varphi \| \| v\|_{L^6(\Omega)} \notag \\
&\leq C \Big( \| F''(\vp(\cdot +h))\|_{L^3(\Omega)}+ \| F''(\vp)\|_{L^3(\Omega)}
\Big) \|\partial_t^h\varphi\|.
\label{Fdiff}
\end{align}
Recalling that $\partial_t^h \vp \rightarrow \partial_t \vp$ in $L^2(0,T;V)$ and $F''(\vp)\in L^\infty(0,T;L^3(\Omega))$, there exists a positive constant $\overline{C}_3$, independent of $h$, such that 
$\| \partial_t^h \mu\|_{L^2(0,T;V')}\leq \overline{C}_3$. This implies that $\partial_t \mu \in L^2(0,T;V')$.
In particular, we deduce that $\mu \in \mathcal{C}([0,T],V)$.
\smallskip

\smallskip

\noindent
\textbf{8. Uniqueness and Continuous dependence.} 
The uniqueness of strong solutions is an immediate consequence of Theorem \ref{weak-uniqueness}. We conclude the proof by showing a continuous dependence estimate with respect to the initial conditions in  higher-order norms than the dual norms employed in Theorem \ref{weak-uniqueness}. 
We define $\uu=\uu_1-\uu_2$ and $\vp=\vp_1-\vp_2$, where 
$(\uu_1,\vp_1)$ and $(\uu_2,\vp_2)$ are two strong solutions departing from $(\uu_{01},\vp_{01})$ and $(\uu_{02},\vp_{02})$ which satisfy
$\uu_{0i}\in \V_\sigma$ and $\vp_{0i} \in H^2(\Omega)$ such that 
$\| \vp_{0i}\|_{L^\infty(\Omega)}\leq 1$, $|\overline{\vp}_{0i}|<1$, $\mu_{0i}=-\Delta \vp_{0i}+\Psi'(\vp_{0i}) \in V$ and 
$\partial_\n \vp_{0i}=0$ on $\partial \Omega$. 
We take $\vv=\uu$ and $v=\vp$ in \eqref{e1u} and \eqref{e2u}, respectively. Adding the resulting equalities, we find
$$
\ddt \mathcal{H}_1 + (\nu(\vp_1)D\uu, D\uu)
+(\nabla \mu, \nabla \vp)= \sum_{k=1}^3 \mathcal{J}_k,
$$
having set 
\begin{align*}
&\mathcal{H}_1=\frac12 \| \uu\|^2+ \frac12 \| \vp\|^2,\\
&\mathcal{J}_1=-((\uu\cdot \nabla)\uu_2,\uu)- ((\nu(\vp_1)-\nu(\vp_2))D \uu_2, \nabla \uu),\\
&\mathcal{J}_2= (\nabla \vp_1 \otimes \nabla \vp,\nabla \uu)+ (\nabla \vp\otimes \nabla \vp_2,\nabla \uu),\\
&\mathcal{J}_3=(\vp \uu_1, \nabla \vp)+ (\vp_2 \uu, \nabla \vp).
\end{align*}
In light of the regularity of strong solutions, there exists a positive constant $C_0$ such that
\begin{equation}
\label{regstronguniq}
\| \uu_i\|_{L^\infty(0,T;\mathbf{L}^3(\Omega))}+\| \vp\|_{L^\infty(0,T;W^{2,3}(\Omega))}+\| \Psi(\vp_i)\|_{L^\infty(0,T;L^3(\Omega))}\leq C_0.
\end{equation}
In the sequel the positive constant $C_i$, $i\in \mathbb{N}$ depends on $\nu_\ast$, $\nu'$, $C_0$ and the constants appearing in embedding results.
Due to the homogeneous Neumann boundary condition, we also recall the basic inequality
\begin{equation}
\label{basicineq}
\| \vp\|_V^2 \leq \| \Delta \vp\| \| \vp\|+ \| \vp\|^2.
\end{equation}
Integrating by parts and using the embedding $V\hookrightarrow L^6(\Omega)$, together with \eqref{regstronguniq} and \eqref{basicineq}, we observe that
\begin{align*}
(\nabla \mu,\nabla \vp)
&\geq \| \Delta \vp\|^2- \big( \| \Psi''(\vp_1)\|_{L^3(\Omega)}+ \| \Psi''(\vp_2)\|_{L^3(\Omega)}\big) \|\vp\|_{L^6(\Omega)}\| \Delta \vp\| \\
&\geq \frac12 \| \Delta \vp\|^2- C_1 \| \vp\|_V^2  \\
&\geq \frac14 \| \Delta \vp\|^2- C_2 \| \vp\|^2. 
\end{align*}
Due to the Korn inequality and the above estimate, we obtain
$$
\ddt \mathcal{H}_1 + \nu_\ast \| \nabla \uu\|^2+ \frac14 \| \Delta \vp\|^2
\leq C_2 \| \vp\|^2 +\sum_{k=1}^3 \mathcal{J}_k.
$$
We now address the terms $\mathcal{J}_k$. By using \eqref{basicineq}, we have
\begin{align*}
\mathcal{J}_1&\leq \|\uu \| \| \nabla \uu_2\|_{\mathbf{L}^3(\Omega)} \| \uu\|_{\mathbf{L}^6(\Omega)}+ C \| \vp\|_{L^6(\Omega)} \| D \uu_2\|_{\mathbf{L}^3(\Omega)} \| \nabla \uu\|\\
&\leq \frac{\nu_\ast}{4} \|\nabla \uu \|^2+ C \| \nabla \uu_2\|_{\mathbf{L}^3(\Omega)}^2 \| \uu\|^2
+C\| D \uu_2\|_{\mathbf{L}^3(\Omega)}^2 \| \vp\|_V^2
,\\
&\leq \frac{\nu_\ast}{4} \|\nabla \uu \|^2+ \frac{1}{24} \| \Delta \vp\|^2+C_3 \Big( \| \nabla \uu_2\|_{\mathbf{L}^3(\Omega)}^2 \| \uu\|^2+ \| D \uu_2\|_{\mathbf{L}^3(\Omega)}^4 \| \vp\|^2\Big).
\end{align*}
By \eqref{regstronguniq} and \eqref{basicineq} and the embedding $W^{2,3}(\Omega)\hookrightarrow W^{1,\infty}(\Omega)$ valid in dimension two, we obtain
\begin{align*}
\mathcal{J}_2&\leq \big(\|\nabla \vp_1 \|_{\mathbf{L}^\infty(\Omega)}+\|\nabla \vp_2 \|_{\mathbf{L}^\infty(\Omega)}\big) \|\nabla \vp\| \| \nabla \uu\|\\
&\leq \frac{\nu_\ast}{4} \|\nabla \uu \|^2+ C_4\| \nabla \vp\|^2\\
&\leq  \frac{\nu_\ast}{4} \|\nabla \uu \|^2
+ \frac{1}{24} \|\Delta \vp \|^2+C_5\| \vp\|^2,
\end{align*}
and
\begin{align*}
\mathcal{J}_3&\leq \| \vp\|_{L^6(\Omega)} \| \uu_1\|_{\mathbf{L}^3(\Omega)} \| \nabla \vp\|+ \| \uu\| \| \nabla \vp\| \\
&\leq C_6 \|\vp\|_V^2 + \| \uu\|^2\\
&\leq \frac{1}{24}\| \Delta \vp\|^2+C_7 \Big( \| \vp\|^2 + \| \uu\|^2\Big).
\end{align*}
In view of the above estimates, we end up with the following differential inequality
$$
\ddt \mathcal{H}_1+ \frac{\nu_\ast}{2}\| \nabla \uu\|^2+
\frac{1}{8}\| \Delta \vp\|^2\leq C_8 \Big( 1+ \| \uu_2\|_{\mathbf{W}^{1,3}(\Omega)}^4\Big)\mathcal{H}_1.
$$
Therefore, since $\uu_2\in L^4(0,T; \mathbf{W}^{1,3}(\Omega))$, an application of the Gronwall lemma implies
the desired stability inequality \eqref{uniqhigh}. 
\end{proof}


By virtue of the energy identity (cf. \eqref{EI}) and the global well-posedness of the strong solutions, we can prove that the (unique) weak solution regularizes instantaneously.
That is, the weak solution is indeed a strong solution on
$\Omega \times (\tau,\infty)$, for any $\tau>0$. 

\begin{theorem}
\label{regularity}
Let $d=2$, $R>0$, $m\in (-1,1)$ and $\tau>0$ be given.
Assume that $(\uu_0,\vp_0)$ is an initial datum such that
$\E(\uu_0,\varphi_0)\leq R$, $\| \vp_0\|_{L^\infty(\Omega)}\leq 1$ and $\overline{\varphi}_0=m$,
and $(\uu,\vp)$ is the weak solution departing from $(\uu_0,\vp_0)$.
Then, there exist two positive constants $M_1=M_1(R,m,\tau)$ and $M_2=M_2(R,m,\tau)$, independent of the specific datum $(\uu_0,\vp_0)$, such that
\begin{equation}
\label{reg1}
 \sup_{t\geq \tau}\|\uu(t)\|_{\V_\sigma}+
  \sup_{t\geq \tau} \|\mu(t)\|_{V}\leq M_1,
\end{equation}
and
\begin{equation}
\label{reg2}
\| \uu\|_{L^2(t,t+1;\W_\sigma)}
+\| \partial_t \uu\|_{L^2(t,t+1;\H_\sigma)}
+\| \partial_t \vp\|_{L^2(t,t+1;V)}\leq M_2, \quad \forall \, t\geq \tau.
\end{equation}
In addition, for any $p\geq 2$, there exists a positive constant $M_3=M_3(R,m,\tau,p)$ such that
\begin{equation}
\label{reg3}
\sup_{t\geq \tau}\| \vp(t)\|_{W^{2,p}(\Omega)}+ \| F''(\vp)\|_{L^\infty(\tau,\infty;L^p(\Omega))}\leq M_3.
\end{equation}
\end{theorem}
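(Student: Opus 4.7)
My plan is to upgrade the weak solution to the strong regularity on an interval $[t^\ast, \infty)$ with $t^\ast \in (0, \tau)$, and then promote the local-in-time estimates of Theorem~\ref{strong} to uniform-in-time estimates via the dissipation encoded in the energy identity. First I would exploit the energy identity \eqref{EI} (equality in $d=2$) to deduce
$$\int_0^{\infty} \big( \nu_\ast \|\nabla \uu(s)\|^2 + \|\nabla \mu(s)\|^2 \big)\, ds \leq C(R),$$
together with the elliptic estimate $\|\vp(s)\|^2_{H^2(\Omega)} \leq C(1+\|\nabla \mu(s)\|)$ coming from Theorem~\ref{ell2} applied to $-\Delta \vp + F'(\vp) = \mu + \theta_0 \vp$ (with the mean constraint $|\overline{\vp}|=|m|<1$). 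Chebyshev's inequality then produces a time $t^\ast \in (0, \tau/2)$ at which
$$\|\uu(t^\ast)\|_{\V_\sigma} + \|\mu(t^\ast)\|_V + \|\vp(t^\ast)\|_{H^2(\Omega)} \leq K(R, m, \tau).$$
Since $\partial_\n \vp(t^\ast) = 0$ a.e., $\|\vp(t^\ast)\|_{L^\infty(\Omega)} \leq 1$ and $\overline{\vp}(t^\ast) = m \in (-1, 1)$, the pair $(\uu(t^\ast), \vp(t^\ast))$ is admissible for Theorem~\ref{strong}. Applying that theorem on $[t^\ast, T]$ for arbitrary $T > t^\ast$, and using Theorem~\ref{weak-uniqueness} to identify the resulting strong solution with $(\uu, \vp)$, I conclude that the weak solution is strong on $[\tau, \infty)$.

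To derive the uniform-in-time bound \eqref{reg1}, I would revisit the differential inequality \eqref{diffineq4} (passed to the limit in the approximation parameters), which I rewrite in the form
$$\ddt \Lambda + \overline{\nu} \big(\|\A\uu\|^2 + \|\partial_t \uu\|^2 + \|\nabla \partial_t \vp\|^2\big) \leq c'_{14} \Big(\|\nabla \uu\|^4 + \big(1+\|\uu\|^2_{\mathbf{L}^3(\Omega)} + \|\vp\|^4_{H^2(\Omega)}\big)\big(1+\|\nabla \uu\|^2 + \|\nabla \mu\|^2\big) \Big),$$
where $\Lambda$ satisfies the two-sided bound $\tfrac14(\|\nabla \uu\|^2 + \|\nabla \mu\|^2) - c'_{15} \leq \Lambda \leq c'_{16}(1 + \|\nabla \uu\|^2 + \|\nabla \mu\|^2)$. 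Using $\|\nabla \uu\|^2 \leq 4\Lambda + 4c'_{15}$ and the bound $\|\vp\|^4_{H^2(\Omega)} \leq C(1+\|\nabla \mu\|^2)$ squared from \eqref{vpapprH2} (passed to the limit), the right-hand side is dominated by $h(t)(1+\Lambda(t))$ with
$$h(t) = C\big(1 + \|\nabla \uu(t)\|^2 + \|\uu(t)\|^2_{\mathbf{L}^3(\Omega)} + \|\nabla \mu(t)\|^2 \big).$$
The global energy dissipation and the Ladyzhenskaya inequality \eqref{LADY} (controlling $\|\uu\|^2_{\mathbf{L}^3}\le C\|\nabla\uu\|$) yield $\int_t^{t+1} h(s)\, ds \leq L(R, m)$ and $\int_t^{t+1} \Lambda(s)\, ds \leq L(R, m)$ uniformly in $t \geq t^\ast$. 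The uniform Gronwall lemma then delivers $\Lambda(t) \leq M_1(R, m, \tau)$ for every $t \geq \tau$, which is \eqref{reg1}; integrating the differential inequality on $[t, t+1]$ and using this bound yields \eqref{reg2}.

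Finally, \eqref{reg3} follows by applying Theorem~\ref{ell2} to $-\Delta \vp + F'(\vp) = \mu + \theta_0 \vp$: the bound $\mu \in L^\infty(\tau, \infty; V)$ just obtained, combined with $|\overline{\vp}| = |m| < 1$, gives $\vp \in L^\infty(\tau, \infty; W^{2,p}(\Omega))$ and $F'(\vp) \in L^\infty(\tau, \infty; L^p(\Omega))$ for every $p \in [2, \infty)$, and the growth condition \eqref{Fsec} transfers the bound to $F''(\vp)$. The main obstacle is the second step: the differential inequality from Theorem~\ref{strong} carries the supercritical term $\Lambda^2$ on its right-hand side, for which the bare dissipation is insufficient. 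The decisive manoeuvre is the rearrangement $\|\nabla \uu\|^4 \leq (4\Lambda + 4c'_{15})\|\nabla \uu\|^2$ together with $\|\vp\|^4_{H^2} \leq C(1+\|\nabla\mu\|^2)$, which reassigns one factor of $\Lambda$ into an $L^1_{\mathrm{unif}}$ coefficient and recasts the inequality as $\ddt \Lambda \leq h(t)(1+\Lambda)$, the precise form to which the uniform Gronwall lemma applies.
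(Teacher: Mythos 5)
Your proposal is correct and follows essentially the same route as the paper: locate an admissible time $\tau_0<\tau$ via the energy identity and the regularity \eqref{reguweak}--\eqref{regvpweak}, re-run the higher-order estimate of Theorem \ref{strong} to get the differential inequality for $\Lambda$, and close with the uniform Gronwall lemma using the globally dissipated quantities, then read off \eqref{reg3} from Theorem \ref{ell2}. The only (immaterial) differences are that the paper performs the $\Lambda$-inequality at the level of the Galerkin/regularized approximations rather than for the limit solution, and keeps the right-hand side in the form $\tilde c_1(1+\Lambda^2)$, the splitting of one factor of $\Lambda$ into an $L^1_{\mathrm{unif}}$ coefficient being exactly what the uniform Gronwall lemma performs; your explicit rearrangement into $h(t)(1+\Lambda)$ is the same step made visible.
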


\begin{proof}
Let $(\uu,\vp)$ be the global weak solution with initial condition $(\uu_0,\vp_0)$ given by Theorem \eqref{existence}.
Due to \eqref{EI}, for any $\tau>0$, we infer from \eqref{EI} that there exists $\tau_0\in (0,\tau)$ such that $(\uu(\tau_0),\vp(\tau_0))$ satisfies the assumptions of Theorem \ref{strong} and 
\begin{equation}
\label{estinitdatum}
\E(\uu(\tau_0), \vp(\tau_0))\leq R, \quad \overline{\vp}(\tau_0)=m.
\end{equation}
Taking $(\uu(\tau_0),\vp(\tau_0))$ as initial datum, we have a global strong solution on the time interval $[\tau_0,\infty)$, which coincides with the weak solution due to Theorem \ref{weak-uniqueness}. 
Now, in order to show the uniform estimates \eqref{reg1}-\eqref{reg3}, we consider the approximating solutions $(\uu_{k,\varepsilon}^n,\vp_{k,\varepsilon}^n)$ constructed in the proof of Theorem \ref{strong} on the time interval $[\tau_0,\infty)$ corresponding to the initial datum $(\uu(\tau_0),\vp(\tau_0))$.
Thanks to \eqref{E0} and \eqref{eneqappr}, we have
\begin{equation}
\label{estregtau1}
\E_\varepsilon(\uu_{k,\varepsilon}^n(t),\vp_{k,\varepsilon}^n(t)) 
+  \int_t^{t+1} \Big( \nu_\ast \| \nabla \uu_{k,\varepsilon}^n(s)\|^2
+\| \nabla \mu_{k,\varepsilon}^n(s)\|^2 \Big)
\, \d s \leq \tilde{c}_0, \quad \forall \, t \geq \tau_0,
\end{equation}
where $\tilde{c}_0$ depends on $R$, but is independent of $t$. 
Then, following line by line Steps $4$ and $5$ in the proof of Theorem \ref{strong}, we deduce the differential inequality (cf. \eqref{diffineq4})
\begin{equation}
\label{estregtau2}
\ddt \Lambda(\uu_{k,\varepsilon}^n, \vp_{k,\varepsilon}^n) + \overline{\nu} \Big(\|\A \uu_{k,\varepsilon}^n\|^2
+ \| \partial_t \uu_{k,\varepsilon}^n\|^2
+ \| \nabla \partial_t \vp_{k,\varepsilon}^n\|^2 \Big)
\leq  \tilde{c}_1 \Big( 1+ \Lambda^2(\uu_{k,\varepsilon}^n, \vp_{k,\varepsilon}^n)\Big),
\end{equation} 
where $\Lambda(\uu_{k,\varepsilon}^n, \vp_{k,\varepsilon}^n)$ is defined in $\eqref{Lambdadef}$. Here, the positive constants $\overline{\nu}$ and $\tilde{c}_1$ depend on $R$, $m$ and the other parameters of the system, but are independent of $k$, $\varepsilon$ and $n$. By \eqref{ipo-nu} and \eqref{estregtau1}, we notice that
$$
\int_t^{t+1} \Lambda(\uu_{k,\varepsilon}^n(s), \vp_{k,\varepsilon}^n)(s) \, \d s\leq \tilde{c}_2, \quad \forall \, t \geq \tau_0. 
$$
Hence, an application of the uniform Gronwall lemma (see \cite[Chapter III, Lemma 1.1]{T}) to \eqref{estregtau2} with $r=\tau-\tau_0$ entails
$$
\| \uu_{k,\varepsilon}^n(t)\|_{\V_\sigma}+ \|\mu_{k,\varepsilon}^n(t))\|_{V}
\leq M_1, \quad \forall \,  t \geq \tau,
$$
where $M_1$ depends on $R$, $m$ and $\tau$. In particular, $M_1$ does not depend on $(\uu(\tau_0),\vp(\tau_0))$. In addition, integrating in time \eqref{estregtau2} on $(t,t+1)$, for any $t\geq \tau$, we are lead to
$$
\| \uu_{k,\varepsilon}^n\|_{L^2(t,t+1;\W_\sigma)}
+ \| \partial_t \uu_{k,\varepsilon}^n\|_{L^2(t,t+1;\H_\sigma)}
+ \| \partial_t \vp_{k,\varepsilon}^n\|_{L^2(t,t+1;V)}
\leq M_2, \quad \forall \, t \geq \tau.
$$
At this stage, passing to the limit in $k$, $\varepsilon$ and $n$ as in the proof of Theorem \ref{strong}, and using the regularity in time of the strong solutions, the estimates \eqref{reg1} and \eqref{reg2} easily follow. In turn, we also infer the estimate \eqref{reg3} from 
Theorem \ref{ell2}.
\end{proof}

As a consequence of Proposition \ref{prop-uniq} and Theorem \ref{regularity}, we deduce the continuous dependence of weak solutions with respect to the initial data in the energy space.

\begin{proposition}
\label{dep-energy-space}
Let $d=2$. Assume that a sequence of initial data $(\uu_{0n},\vp_{0n})$ and $(\uu_{0},\vp_{0})$ are given such that $\uu_{0n}\in \H_\sigma$, $\vp_{0n}\in V$, $\| \vp_{0n}\|_{L^\infty(\Omega)}\leq 1$ and $\overline{\vp}_{0n}=m$ with $m \in (-1,1)$ for all $n$, and $(\uu_{0n},\vp_{0n})$ converges to $(\uu_0,\vp_0)$ in $\H_\sigma \times V$. Consider the solutions $(\uu_n,\vp_n)$, $(\uu,\vp)$ to \eqref{system}-\eqref{bcic} with initial data $(\uu_{0n},\vp_{0n})$ and $(\uu_{0},\vp_{0})$, respectively. Then, for any $t>0$, 
$(\uu_n(t),\vp_n(t))$ converges to $(\uu(t),\vp(t))$ in $\H_\sigma \times V$.
\end{proposition}

\begin{proof}
Let us fix $t>0$. By assumption there exists $N_0>0$ such that 
$\| \uu_{0n}\|+\| \vp_{0n}\|_V \leq N_0$ and $\| \uu_{0}\|+\| \vp_{0}\|_V\leq N_0$. By Theorem \ref{regularity} (with $\tau=\frac{t}{2}$) there exists $N_1$ depending only on $N_0, m, t$ such that $\| \uu_{n}(t)\|_{\V_\sigma}+\| \vp_{n}(t)\|_{H^2(\Omega)}\leq N_1$. Obviously, the same control in $\V_\sigma \times H^2(\Omega)$ holds for $(\uu,\vp)$. 
By Proposition \ref{prop-uniq} we infer that there exists $N_2$ depending on $N_0$ and $m$ such that
$$
\| \uu_n (t) -\uu (t)\|_{\sharp}^2+ 
\| \vp_n(t)-\vp(t)\|_{\ast}^2 
\leq N_2 \Big( \frac{\| \uu_{0n} -\uu_0\|_{\sharp}^2+ 
\| \vp_{0n}-\vp_0\|_{\ast}^2}{N_2}\Big)^{\mathrm{e}^{-\int_0^t \mathcal{Y}(s)} \, \d s}, 
$$
where
$$
\mathcal{Y}(t)=N_2 \Big(1+ \| \uu_{n}(t)\|_{\V_\sigma}^2+
\| \uu(t)\|_{\V_\sigma}^2+ \| \nabla \vp_n(t)\|_{\mathbf{L}^\infty(\Omega)}^2+ \| \nabla \vp(t)\|_{\mathbf{L}^\infty(\Omega)}^2+
\| \vp_{n}(t)\|_{H^2(\Omega)}^2.
$$
Noticing that $\mathcal{Y}(t)\geq N_2$, assuming that $\| \uu_{0n} -\uu_0\|_{\sharp}^2+ 
\| \vp_{0n}-\vp_0\|_{\ast}^2\leq 1$, by interpolation we have
\begin{align*}
&\| \uu_n (t) -\uu (t)\|+ \| \vp_n(t)-\vp(t)\|_V \\
&\leq C  \Big( \| \uu_n (t) -\uu (t)\|_{\sharp}^\frac{1}{2}+ 
\| \vp_n(t)-\vp(t)\|_{\ast}^{\frac14} \Big) 
\Big( \| \uu_n (t) -\uu (t)\|_{\V_\sigma}^\frac{1}{2}+ 
\| \vp_n(t)-\vp(t)\|_{H^2(\Omega)}^{\frac34} \Big)\\
&\leq C(N_1^\frac12+N_1^\frac34)
(N_2^\frac14+N_2^\frac18)
\Big( \frac{\| \uu_{0n} -\uu_0\|_{\sharp}^2+ 
\| \vp_{0n}-\vp_0\|_{\ast}^2}{N_2}\Big)^{\frac14 \mathrm{e}^{-N_2 t}}.
\end{align*}
The above inequality implies the desired conclusion.
\end{proof}

Our next result concerns the propagation of regularity for any weak solution and the validity of the instantaneous separation property from the pure concentrations (i.e. $\pm 1$) in dimension two.
This is possible due to a suitable estimate of $\Psi''(\vp)$ in $L^p$ spaces, which allows us to show further a-priori higher-order Sobolev estimates.

\begin{theorem}
\label{separazione}
Let $d=2$, $R>0$, $m\in (-1,1)$ and $\tau>0$ be given.
Assume that $(\uu_0,\vp_0)$ is an initial datum such that
$\E(\uu_0,\varphi_0)\leq R$, $\| \vp_0\|_{L^\infty(\Omega)}\leq 1$ and $\overline{\varphi}_0=m$,
and $(\uu,\vp)$ is the weak solution departing from 
$(\uu_0,\vp_0)$.
Then, there exists two positive constants $M_4=M_4(R,m,\tau)$ and $M_5=M_5(R,m,\tau)$, independent of the specific datum $(\uu_0,\vp_0)$, such that
\begin{equation}
\label{reg4}
\| \partial_t \uu\|_{L^{\infty}(\tau,\infty;\H_\sigma)}
+\| \partial_t \vp\|_{L^{\infty}(\tau,\infty;H)}
\leq M_4,
\end{equation}
and 
\begin{equation}
\label{reg5}
\| \partial_t \uu\|_{L^2(t,t+1;\V_\sigma)}
+\|\partial_t \vp\|_{L^2(t,t+1;H^2(\Omega))} \leq M_5,\quad \forall \, t \geq \tau.
\end{equation}
Furthermore, there exists $\delta=\delta(R,m,\tau)>0$ and $M_6=M_6(R,m,\tau)$ such that
$$
\sup_{ t\geq \tau}\|\varphi(t)\|_{\mathcal{C}(\overline{\Omega} )}\leq 1-\delta
$$
and 
\begin{equation}
\label{reg6}
\sup_{t\geq \tau}\| \uu(t)\|_{\W_\sigma}+ \sup_{t\geq \tau}\| \vp(t)\|_{H^4(\Omega)}\leq M_6.
\end{equation}
\end{theorem}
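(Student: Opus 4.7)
The plan is to bootstrap the regularity of Theorem \ref{regularity} by differentiating the NSCH system in time, and then to exploit the resulting $L^\infty$ control on $\partial_t\vp$ to drive elliptic regularity for $\mu$ and deduce the separation property. Working with the Galerkin approximations $(\uu_{k,\varepsilon}^n,\vp_{k,\varepsilon}^n)$ of Theorem \ref{strong}, restarted from a time $\tau_0\in(0,\tau/2)$ at which the uniform bounds of Theorem \ref{regularity} are already available, I would formally differentiate the momentum and Cahn--Hilliard equations in time. Writing $\uu_t,\vp_t,\mu_t$ for the time derivatives, this gives
\begin{align*}
&\partial_t\uu_t+(\uu\cdot\nabla)\uu_t+(\uu_t\cdot\nabla)\uu-\mathrm{div}\,(\nu(\vp)D\uu_t)-\mathrm{div}\,(\nu'(\vp)\vp_t D\uu)+\nabla\pi_t=\mu_t\nabla\vp+\mu\nabla\vp_t,\\
&\partial_t\vp_t+\uu_t\cdot\nabla\vp+\uu\cdot\nabla\vp_t=\Delta\mu_t,\qquad \mu_t=-\Delta\vp_t+\Psi''(\vp)\vp_t.
\end{align*}
Testing the first equation with $\uu_t$ and the second with $\mu_t$, and using the identity $(\mu_t,\partial_t\vp_t)=\tfrac12\ddt\|\nabla\vp_t\|^2+(\Psi''(\vp)\vp_t,\partial_t\vp_t)$, the cross terms are controlled by Gagliardo--Nirenberg combined with the $L^\infty$-in-time bounds on $\|\uu\|_{\V_\sigma}$, $\|\mu\|_V$, $\|\vp\|_{W^{2,p}}$ and $\|F''(\vp)\|_{L^p}$ provided by Theorem \ref{regularity}. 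The outcome is a differential inequality
\[
\ddt \mathcal{F}+\overline{\nu}\bigl(\|\nabla\uu_t\|^2+\|\nabla\mu_t\|^2\bigr)\leq \mathcal{Y}(t)\bigl(1+\mathcal{F}\bigr),
\]
with $\mathcal{F}\simeq \tfrac12\|\uu_t\|^2+\tfrac12\|\nabla\vp_t\|^2$ and $\mathcal{Y}\in L^1(t,t+1)$ uniformly in $t\geq\tau_0$.

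Since $\mathcal{F}\in L^1(t,t+1)$ uniformly in $t\geq\tau_0$ by \eqref{reg2}, the uniform Gronwall lemma (see \cite[Chapter III, Lemma 1.1]{T}) with $r=\tau-\tau_0$ yields a $t$-independent $L^\infty$ bound on $\mathcal{F}$ over $[\tau,\infty)$ and, upon integration on $(t,t+1)$, the matching dissipative bounds on $\|\nabla\uu_t\|$ and $\|\nabla\mu_t\|$. Passing to the limit in the approximation parameters $n,\varepsilon,k$ produces \eqref{reg4} together with the $\partial_t\uu$ part of \eqref{reg5}. With $\partial_t\vp\in L^\infty(\tau,\infty;H)$ at hand, I would then regard $\Delta\mu=\partial_t\vp+\uu\cdot\nabla\vp$ as a Neumann problem whose right-hand side is uniformly bounded in $H$, deduce by classical elliptic regularity that $\mu\in L^\infty(\tau,\infty;H^2(\Omega))$, and use the two-dimensional embedding $H^2\hookrightarrow\C(\overline{\Omega})$ to upgrade this to $\mu\in L^\infty(\Omega\times(\tau,\infty))$. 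Viewing finally $-\Delta\vp+F'(\vp)=\mu+\theta_0\vp$ pointwise in time as a Neumann problem with $L^\infty$ source and invoking Theorem \ref{ell2}, one obtains $F'(\vp)\in L^\infty(\Omega\times(\tau,\infty))$; since $F'(z)\to\pm\infty$ as $z\to\pm 1$, this forces the separation $\|\vp(t)\|_{\C(\overline{\Omega})}\leq 1-\delta$ uniformly for $t\geq\tau$.

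Once the separation property is in hand, $\Psi''$ and $\Psi'''$ are uniformly bounded along the trajectory. The identity $\Delta\vp_t=\Psi''(\vp)\vp_t-\mu_t$ together with $\mu_t\in L^2(t,t+1;V)$ (from the dissipation above) then yields $\partial_t\vp\in L^2(t,t+1;H^2(\Omega))$, completing \eqref{reg5}. Elliptic regularity applied to $-\Delta\vp=\mu-\Psi'(\vp)$, with $\mu\in L^\infty(\tau,\infty;H^2(\Omega))$ and $\Psi'$ now smooth on the compact subinterval of $(-1,1)$ attained by $\vp$, gives $\vp\in L^\infty(\tau,\infty;H^4(\Omega))$; the Stokes problem with right-hand side $\mu\nabla\vp-(\uu\cdot\nabla)\uu-\partial_t\uu\in L^\infty(\tau,\infty;\H_\sigma)$ finally produces $\uu\in L^\infty(\tau,\infty;\W_\sigma)$. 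The hard part is the first step: the quadratic form $(\Psi''(\vp)\vp_t,\partial_t\vp_t)$ is not sign-definite, and a naive time-differentiation generates $\Psi'''(\vp)\vp_t^3$, which is intractable for the logarithmic potential since $F'''$ blows up quadratically near $\pm 1$. The remedy is to never differentiate $F''$ in time; instead, one uses the $L^p$ control of $F''(\vp)$ from \eqref{reg3} combined with a Gagliardo--Nirenberg interpolation for $\vp_t$ to absorb this term into the parabolic dissipation $\|\nabla\vp_t\|^2$.
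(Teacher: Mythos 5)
Your overall architecture—restart from $\tau_0<\tau/2$ where Theorem \ref{regularity} applies, get an $L^\infty$-in-time bound on the time derivatives, feed $\partial_t\vp+\uu\cdot\nabla\vp\in L^\infty(\tau,\infty;H)$ into elliptic regularity for $\mu$, deduce $F'(\vp)\in L^\infty$ and hence separation, then bootstrap to $H^4$ for $\vp$ and to $\W_\sigma$ for $\uu$ via the variable-viscosity Stokes estimate—coincides with the paper's from the separation step onward. The gap is in your first and, as you yourself say, hardest step. You test the time-differentiated Cahn--Hilliard equation with $\mu_t$ so as to produce the energy $\tfrac12\|\nabla\vp_t\|^2$, and you are then left with the term $(\Psi''(\vp)\vp_t,\partial_t\vp_t)$. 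Your proposed remedy—``never differentiate $F''$ in time; use the $L^p$ control of $F''(\vp)$ and Gagliardo--Nirenberg to absorb this term into the dissipation $\|\nabla\vp_t\|^2$''—does not close: the term contains $\partial_t\vp_t=\partial_{tt}\vp$, for which no norm whatsoever is available at this stage, and $\|\nabla\vp_t\|^2$ is not even a dissipation term in your scheme (your dissipation is $\|\nabla\uu_t\|^2+\|\nabla\mu_t\|^2$). The only way to remove $\partial_{tt}\vp$ from that pairing is to integrate by parts in time, i.e.\ $(\Psi''(\vp)\vp_t,\partial_t\vp_t)=\tfrac12\ddt\int_\Omega\Psi''(\vp)\vp_t^2\,\d x-\tfrac12\int_\Omega\Psi'''(\vp)\vp_t^3\,\d x$, which reintroduces exactly the $\Psi'''$ term you correctly identify as intractable for the logarithmic potential. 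So the differential inequality for $\mathcal{F}\simeq\tfrac12\|\uu_t\|^2+\tfrac12\|\nabla\vp_t\|^2$ is never actually established.

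The paper avoids this by choosing a strictly lower-order energy. It forms the difference quotients $(\partial_t^h\uu,\partial_t^h\vp)$ and repeats verbatim the continuous-dependence computation of Step 8 of Theorem \ref{strong} (the one leading to \eqref{uniqhigh}), i.e.\ it tests the momentum equation with $\partial_t^h\uu$ and the Cahn--Hilliard equation with $\partial_t^h\vp$ itself, not with $\partial_t^h\mu$. The energy is then $\mathcal{H}_2=\tfrac12\|\partial_t^h\uu\|^2+\tfrac12\|\partial_t^h\vp\|^2$ with dissipation $\|\nabla\partial_t^h\uu\|^2+\|\Delta\partial_t^h\vp\|^2$; the potential enters only through $(\nabla(\Psi'(\vp(\cdot+h))-\Psi'(\vp)),\nabla\partial_t^h\vp)$, which after integration by parts is controlled by $\|\Psi''(\vp)\|_{L^3(\Omega)}\|\partial_t^h\vp\|_{L^6(\Omega)}\|\Delta\partial_t^h\vp\|$ using \eqref{reg3}—no second time derivative and no $\Psi'''$ ever appear. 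The uniform Gronwall lemma then gives \eqref{reg4}, and the $\|\Delta\partial_t^h\vp\|^2$ dissipation already yields the $H^2$ part of \eqref{reg5} directly, without waiting for the separation property as you do. If you replace your first step by this difference-quotient, $L^2$-level argument, the remainder of your proof goes through essentially as written.
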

\medskip

\begin{proof}
First, by replacing $\tau$ with $\frac{\tau}{2}$ in Theorem \ref{regularity}, we can assume that the solution $(\uu,\vp)$
satisfies the uniform estimates \eqref{reg1}-\eqref{reg3} on the time interval $[\frac{\tau}{2},\infty)$.
We proceed by showing additional higher-order a priori estimates on the solution. In the sequel, $k_i$, $i\in\mathbb{N}$, denotes a positive constant which depends on $R$, $m$ and $\tau$, but is independent of the specific initial datum.
Given $h>0$, repeating line by line the proof of the stability result \eqref{uniqhigh} in Theorem \ref{strong} (cf. Step $8$), 
where the difference of two solutions $(\uu_1-\uu_2, \vp_1-\vp_2)$ is replaced 
by $(\partial_t^h \uu, \partial_t^h \vp)$, we deduce the differential inequality 
\begin{equation}
\label{diffhigh}
\ddt \mathcal{H}_2
+ \frac{\nu_\ast}{2} \| \nabla \partial_t^h \uu\|^2 
+\frac18\|\Delta \partial_t^h\varphi\|^2
\leq k_0 (1+ \| \uu\|_{\mathbf{W}^{1,3}(\Omega)}^4) \mathcal{H}_2,
\end{equation}
where
$$
\mathcal{H}_2= \frac12 \| \partial_t^h \uu\|^2
+ \frac12 \|\partial_t^h\varphi\|^2,
$$
and the positive constant $k_0$ is independent of $h$, but depends on $M_1$ and $M_3$.
Recalling that
$\| \partial_t^h f\|_{L^2(t,t+1; H)}
\leq\| f_t\|_{L^2(t,t+2; H)},$
thanks to Theorem \ref{regularity}, we observe that
$$
\int_t^{t+1} \Big( \mathcal{H}_2(s)+ \| \uu(s)\|_{\mathbf{W}^{1,3}(\Omega)}^4\Big) \, \d s\leq k_1, \quad \forall \, t \geq \frac{\tau}{2},
$$
where $k_1$ is independent of $h$, but depends on $M_2$.
Hence, the uniform Gronwall lemma (see \cite[Chapter III, Lemma 1.1]{T}) with $r=\frac{\tau}{2}$ yields
$$
\| \partial_t^h \uu\|_{L^{\infty}(\tau,\infty;\H_\sigma)}
+\| \partial_t^h \vp\|_{L^{\infty}(\tau,\infty;H)}
\leq M_4,
$$
and 
$$
\|\partial_t^h \uu\|_{L^2(t,t+1;\V_\sigma)}
+\| \partial_t^h \vp\|_{L^2(t,t+1;H^2(\Omega))} \leq M_5, \quad \forall \, t \geq \tau,
$$
where $M_4$ and $M_5$ depend on $R$, $m$ and $\tau$, but are independent of $h$, $t$ and the specific initial datum. 
A final passage to the limit as $h\rightarrow 0$ entails \eqref{reg4} 
and \eqref{reg5}.
We are now in position to prove the separation property.
In light of \eqref{reg4}, it is immediate to deduce that  
$\partial_t\vp+\uu\cdot \nabla \vp\in L^\infty(\tau,\infty;H)$. Then, the regularity theory of the Neumann problem implies that 
\begin{equation}
\label{muH2}
\| \mu\|_{L^{\infty}(\tau,\infty;H^2(\Omega))}\leq k_2.
\end{equation}
By using Theorem \ref{ell2} with $f=\mu+\theta_0\vp\in L^\infty(\Omega \times(\tau,\infty))$, we find
$
\| F'(\vp)\|_{L^\infty(\Omega\times(\tau,\infty))}
\leq k_3.
$
This, in turn, entails that there exists $\delta>0$ such that
\begin{equation}
\label{separation}
\sup_{t\geq \tau}\| \vp(t)\|_{\mathcal{C}
(\overline{\Omega})} \leq 1-\delta.
\end{equation}
Thanks to the regularity \eqref{reg3} and the separation property \eqref{separation}, and recalling that $F\in \mathcal{C}^3([-1+\delta,1-\delta])$, we deduce that 
$\| F'(\vp)\|_{L^\infty(\tau,\infty;H^2(\Omega))}\leq k_4.$
Thus, exploiting \eqref{muH2}, the above control and the regularity theory of the Neumann problem, we get
$\|\vp\|_{L^{\infty}(\tau,\infty;H^4(\Omega))} \leq k_5.$
Moreover, setting $\f= \mu \nabla \vp-\partial_t\uu -(\uu\cdot \nabla )\uu$,
we infer from  \eqref{reg1}, \eqref{reg3} and \eqref{reg4} that, for any $1<p<2$, there exists $k_6$ such that
$
\| \f\|_{L^\infty(\tau,\infty;\mathbf{L}^p(\Omega))}\leq k_6,
$
where $k_6$ depends on $p$. Then, in light of \eqref{reg3}, 
an application of Theorem \ref{STOKESVISCREG} (with $r=\infty$) yields
$
\| \uu\|_{L^\infty(\tau,\infty; \mathbf{W}^{2,p}(\Omega))}\leq k_7.
$
Recalling the embedding $W^{1,p}\hookrightarrow L^{p^\ast}$, where 
$\frac{1}{p^\ast}=\frac{1}{p}-\frac{1}{2}$, and choosing $p=\frac43$, we obtain  
$\uu \in L^\infty(\tau,\infty; \mathbf{W}^{1,4}(\Omega))$.
Thanks to this regularity, we observe that $\f\in L^\infty(\tau,\infty;\H)$. 
Applying once again Theorem \ref{STOKESVISCREG}, we find 
$$
\| \uu\|_{L^{\infty}(\tau, \infty;\W_\sigma)}\leq k_8.
$$
Due to the continuity in time of the solution, 
we note that the above inequalities hold for any $t\geq \tau$, giving the desired estimate \eqref{reg6} with $M_6$ depending on $k_5$ and $k_8$.
\end{proof}

\section{Local Strong Solutions in Three Dimensions}
\label{6}
\setcounter{equation}{0}

\noindent
In this section we study the well-posedness of strong solutions in dimension three. 
\begin{theorem}
\label{strong2}
Let $d=3$.
Assume that $\uu _0\in \V_\sigma$ and $\vp_0 \in H^2(\Omega)$ is such that 
$\| \vp_0\|_{L^\infty(\Omega)}\leq 1$, $|\overline{\vp}_0|<1$, $\mu_0=-\Delta \vp_0+\Psi'(\vp_0) \in V$ and 
$\partial_\n \vp_0=0$ on $\partial \Omega$.
Then, there exist a time $T^\ast>0$ and a unique strong solution to \eqref{system}-\eqref{bcic} on $[0,T^\ast]$ satisfying
\begin{align*}
&\uu \in L^\infty(0,T^\ast;\V_\sigma)\cap L^2(0,T^\ast;\W_\sigma)
\cap H^1(0,T^\ast;\H_\sigma),\quad \pi \in L^2(0,T^\ast;V),\\
&\vp \in L^\infty(0,T^\ast;W^{2,6}(\Omega))\cap H^1(0,T^\ast;V),\\
&\mu \in L^{\infty}(0,T^\ast;V)\cap L^2(0,T^\ast;H^3(\Omega)).
\end{align*}
The strong solution satisfies \eqref{system} almost everywhere on 
$(x,t)\in \Omega \times (0,T^\ast)$ and $\partial_\n \mu=0$ almost everywhere on 
$\partial\Omega\times(0,T^\ast)$.
\end{theorem}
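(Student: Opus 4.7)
The plan is to follow the architecture of the proof of Theorem \ref{strong}, replacing the global-in-time estimates by local-in-time ones forced by the three-dimensional scaling. More precisely, I would reuse verbatim Steps 1--3 of that proof: approximate $\Psi$ by the family $\Psi_\varepsilon\in \mathcal{C}^2(\mathbb{R})$ satisfying \eqref{Psieps-prop}, approximate the initial condition $\vp_0$ by the cutoff/Neumann-problem construction producing $\vp_{0,k}\in H^3(\Omega)$ with $\|\vp_{0,k}\|_{L^\infty}\leq 1-\delta(k)$ and $\|-\Delta\vp_{0,k}+F'_\varepsilon(\vp_{0,k})\|_V\leq \|\tilde\mu_0\|_V$, and set up a Galerkin scheme in $\V_n\times V_n$ yielding approximate solutions $(\uu_{k,\varepsilon}^n,\vp_{k,\varepsilon}^n,\mu_{k,\varepsilon}^n)$. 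The basic energy identity \eqref{eneqappr} and the estimates \eqref{dissest3app}--\eqref{vpapprH2} derived there do not use the dimension and continue to hold on the whole maximal existence interval of the Galerkin system.

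The decisive departure from the two-dimensional argument is Step 5, the higher-order estimate. I would again set
\[
\Lambda(\uu_{k,\varepsilon}^n,\vp_{k,\varepsilon}^n)
=\tfrac12\|\nabla\uu_{k,\varepsilon}^n\|^2
+\tfrac12\|\nabla\mu_{k,\varepsilon}^n\|^2
+(\uu_{k,\varepsilon}^n\cdot\nabla\vp_{k,\varepsilon}^n,\mu_{k,\varepsilon}^n),
\]
test \eqref{e1app} by $\A\uu_{k,\varepsilon}^n$ and $\partial_t\uu_{k,\varepsilon}^n$, and test the time derivative of \eqref{e2app} by $\mu_{k,\varepsilon}^n$. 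The nonlinearities must now be controlled with 3D interpolation: for the convective term one uses $\|\uu\|_{\mathbf{L}^6}\leq C\|\nabla\uu\|$ and $\|\nabla\uu\|_{\mathbf{L}^3}\leq C\|\nabla\uu\|^{1/2}\|\A\uu\|^{1/2}$ to obtain $b(\uu,\uu,\A\uu)\leq \frac{\nu_\ast}{6}\|\A\uu\|^2+C\|\nabla\uu\|^6$; for the viscosity terms and the Korteweg force one uses \eqref{LADY3}, \eqref{DL4}, and the analog of \eqref{pressure2}, together with the Sobolev embedding $V\hookrightarrow L^6(\Omega)$. The outcome is a differential inequality of the form
\[
\ddt \Lambda+\overline{\nu}\bigl(\|\A\uu_{k,\varepsilon}^n\|^2+\|\partial_t\uu_{k,\varepsilon}^n\|^2+\|\nabla\partial_t\vp_{k,\varepsilon}^n\|^2\bigr)\leq C\bigl(1+\Lambda^\alpha\bigr)
\]
for some $\alpha>1$ (in contrast to $\alpha=2$ in 2D, here $\alpha=3$ is natural from the $\|\nabla\uu\|^6$ term). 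By the standard ODE comparison argument, this guarantees a uniform bound on $\Lambda$ on some time interval $[0,T^\ast]$, where $T^\ast$ depends only on $\Lambda(0)$, hence on $\|\uu_0\|_{\V_\sigma}$, $\|\mu_0\|_V$ and $\|\vp_0\|_V$ through the bound on $\Lambda(\uu_{k,\varepsilon}^n(0),\vp_{k,\varepsilon}^n(0))$ obtained exactly as in \eqref{estdatn1}--\eqref{estdatn3}. This localization of the estimate is the main, and essentially the only, obstacle, and it is the reason the conclusion is local in time.

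Once $T^\ast$ is fixed, the passage to the limit $n\to\infty$, then $\varepsilon\to 0$, then $k\to\infty$ follows the compactness scheme of Step 6 of Theorem \ref{strong}, yielding a pair $(\uu,\vp)$ with the stated regularity; $\mu\in L^\infty(0,T^\ast;V)$ implies via Theorem \ref{ell2} (with $p=6$, which is admissible in 3D) that $\vp\in L^\infty(0,T^\ast;W^{2,6}(\Omega))$, and the $L^2(0,T^\ast;H^3(\Omega))$ regularity of $\mu$ together with the recovery of the pressure $\pi\in L^2(0,T^\ast;V)$ are obtained exactly as before. Uniqueness is handled by adapting the argument of Theorem \ref{weak-uniqueness}: the issue flagged in the final remark of Section \ref{3} (that the term $\mathcal{I}_4$ involves both $\uu_1$ and $\uu_2$) is harmless here because both solutions now satisfy $\uu_i\in L^\infty(0,T^\ast;\V_\sigma)\cap L^2(0,T^\ast;\W_\sigma)$, which is more than enough to estimate $\mathcal{I}_4$ in three dimensions through $\|\uu_i\|_{\mathbf{L}^6}\leq C\|\nabla\uu_i\|$ and to absorb the logarithmic term $\mathcal{I}_3$ using $\|\vp_i\|_{H^2}\in L^\infty$; the same Osgood-type differential inequality \eqref{finaldiffeq} then closes, giving $\mathcal{H}\equiv 0$ on $[0,T^\ast]$.
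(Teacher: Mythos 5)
Your existence argument coincides with the paper's: Steps 1--3 are reused verbatim, the higher-order estimate is redone with the three-dimensional interpolation inequalities so that the convective term produces $\|\nabla\uu_{k,\varepsilon}^n\|^6$ and the differential inequality closes with $\Lambda^3$ on the right-hand side (your ``$\alpha=3$''), whence only a local-in-time bound survives; the bound on $\Lambda$ at $t=0$ and the triple passage to the limit are as in Theorem \ref{strong}. All of this matches the paper.

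The one step that would fail as written is your treatment of $\mathcal{I}_3$ in the uniqueness part. You propose to keep the logarithmic estimate and close with ``the same Osgood-type differential inequality \eqref{finaldiffeq}''. But the logarithm in \eqref{finaldiffeq} comes from the Br\'ezis--Gallouet inequality \eqref{BGI}, which is stated (and true) only for $d=2$; in three dimensions $\|\vp\|_{L^\infty(\Omega)}$ cannot be controlled by $\|\vp\|_V$ times a logarithmic correction involving $\|\vp\|_{H^2(\Omega)}$, so the bound $\mathcal{I}_3\leq C\|D\uu_2\|\,\|\vp\|_{L^\infty(\Omega)}\|\nabla\A^{-1}\uu\|$ cannot be converted into the form appearing in \eqref{finaldiffeq}. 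Your parenthetical suggestion to use $\|\vp_i\|_{H^2(\Omega)}\in L^\infty$ does not repair this, since $\mathcal{I}_3$ involves the $L^\infty$ norm of the \emph{difference} $\vp=\vp_1-\vp_2$, which is not controlled by the lower-order quantities $\|\vp\|_\ast$, $\|\nabla\vp\|$ entering $\mathcal{H}$ and the dissipation. The correct (and simpler) route, which is what the paper does, is to exploit the extra integrability of strong solutions on $D\uu_2$ rather than any $L^\infty$ control on $\vp$: estimate
$\mathcal{I}_3\leq C\|\vp\|_{L^6(\Omega)}\|D\uu_2\|_{\mathbf{L}^3(\Omega)}\|\nabla\A^{-1}\uu\|
\leq \tfrac1{12}\|\nabla\vp\|^2+C\|D\uu_2\|_{\mathbf{L}^3(\Omega)}^2\|\uu\|_\sharp^2$,
using $V\hookrightarrow L^6(\Omega)$ (valid in $d=3$) and $\overline{\vp}=0$. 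Since $D\uu_2\in L^2(0,T^\ast;\mathbf{L}^3(\Omega))$ for a strong solution, the resulting inequality is a plain linear Gronwall inequality $\tfrac{\d}{\d t}\mathcal{H}\leq C(1+\|D\uu_2\|_{\mathbf{L}^3(\Omega)}^2)\mathcal{H}$, and no Osgood-type lemma is needed. Your handling of $\mathcal{I}_4$ via $\|\uu_i\|_{\mathbf{L}^6(\Omega)}\leq C\|\nabla\uu_i\|$ is correct and is exactly the paper's estimate.
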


\noindent
The proof of Theorem \ref{strong2}  relies on the argument employed in the proofs of Theorems \ref{weak-uniqueness} and \ref{strong}. For the sake of brevity, we report only the main changes.

\begin{proof}
We follow the proof of Theorem \ref{strong}. For the same values of $k$, $\varepsilon$ and $n$ as defined in Steps $1$-$3$, we obtain the approximating sequences 
$(\uu_{k,\varepsilon}^n, \vp_{k,\varepsilon}^n)$ which solve
\eqref{e1app}-\eqref{e2app} and \eqref{muapp}.
Before deriving uniform a priori estimates we specify that the positive constant $c'_i$, $i \mathbb{N}$, depends on the parameters of the system, the constants in embedding and interpolation results, and $\E(\uu_0,\vp_0)$, but is independent of the approximation parameters $k$, $\varepsilon$ and $n$ and of the norms $\| \uu_0\|_{\V_\sigma}$ and $\| \mu_0\|_V$.
It is easily seen that the energy estimates \eqref{dissest1app}-\eqref{vptappr} also hold. In particular, we have
\begin{equation}
\label{dissest1app3D}
\| \uu_{k,\varepsilon}^n(t)\|+ \| \vp_{k,\varepsilon}^n(t)\|_V\leq c'_1, \quad \forall \, t \in [0,T].
\end{equation}
Let us now proceed by showing higher-order Sobolev estimates.
First, arguing as in Step $5$ we find 
\begin{align}
\label{diffineq1app3D}
\ddt \Big[ &\frac12 \| \nabla \mu_{k,\varepsilon}^n\|^2
+ ( \uu_{k,\varepsilon}^n\cdot 
\nabla \vp_{k,\varepsilon}^n,
\mu_{k,\varepsilon}^n)
\Big]
+ \frac14 \| \nabla 
\partial_t \vp_{k,\varepsilon}^n\|^2 \notag\\
&\leq (\partial_t \uu_{k,\varepsilon}^n \cdot \nabla \vp_{k,\varepsilon}^n, \mu_{k,\varepsilon}^n)+ 
c'_2 (1+\| \uu_{k,\varepsilon}^n\|_{\mathbf{L}^3(\Omega)}^2)
(1+ \| \nabla \uu_{k,\varepsilon}^n\|^2+\|\nabla \mu_{k,\varepsilon}^n\|^2).
\end{align}
In order to recover estimates on the velocity field, we take first
$\vv =\partial_t \uu_{k,\varepsilon}^n$ in \eqref{e1app}. This yields
$$
\| \partial_t \uu_{k,\varepsilon}^n\|^2+
b(\uu_{k,\varepsilon}^n, \uu_{k,\varepsilon}^n,\partial_t \uu_{k,\varepsilon}^n)
-(\mathrm{div }\, (\nu(\vp_{k,\varepsilon}^n)D\uu_{k,\varepsilon}^n), \partial_t \uu_{k,\varepsilon}^n)
= (\mu_{k,\varepsilon}^n \nabla \vp_{k,\varepsilon}^n, \partial_t \uu_{k,\varepsilon}^n).
$$
By using \eqref{LADY3}, \eqref{H2equiv}, we have
\begin{align*}
b(\uu_{k,\varepsilon}^n, \uu_{k,\varepsilon}^n,
\partial_t \uu_{k,\varepsilon}^n)&\leq 
\| \uu_{k,\varepsilon}^n\|_{\mathbf{L}^6(\Omega)}
 \| \nabla  \uu_{k,\varepsilon}^n\|_{\mathbf{L}^3(\Omega)} 
\|\partial_t \uu_{k,\varepsilon}^n \|\\
&\leq C\| \nabla  \uu_{k,\varepsilon}^n \|^\frac32 
\| \A \uu_{k,\varepsilon}^n\|^{\frac12} 
\|\partial_t \uu_{k,\varepsilon}^n \|\\
&\leq \frac16  \|\partial_t \uu_{k,\varepsilon}^n \|^2+ 
c'_3 \Big( \| \A \uu_{k,\varepsilon}^n\|^2+\| \nabla  \uu_{k,\varepsilon}^n \|^6\Big).
\end{align*}
Exploiting once more \eqref{LADY3} and \eqref{H2equiv}, we obtain
\begin{align*}
(\mathrm{div }\, (\nu(\vp_{k,\varepsilon}^n)
D\uu_{k,\varepsilon}^n), 
\partial_t \uu_{k,\varepsilon}^n)
&=\frac12( \nu(\vp_{k,\varepsilon}^n) \Delta \uu_{k,\varepsilon}^n,  \partial_t \uu_{k,\varepsilon}^n)+
(\nu'(\vp_{k,\varepsilon}^n)
  D \uu_{k,\varepsilon}^n \nabla \vp_{k,\varepsilon}^n, 
\partial_t \uu_{k,\varepsilon}^n)\\
&\leq  C \| \A \uu_{k,\varepsilon}^n\| \|\partial_t \uu_{k,\varepsilon}^n \|
+ C \| \nabla \vp_{k,\varepsilon}^n\|_{\mathbf{L}^6(\Omega)}
\| D\uu_{k,\varepsilon}^n\|_{\mathbf{L}^3(\Omega)} 
\|\partial_t \uu_{k,\varepsilon}^n \|\\
&\leq \frac16  \|\partial_t \uu_{k,\varepsilon}^n \|^2+
C \| \A \uu_{k,\varepsilon}^n\|^2
+ C \| \vp_{k,\varepsilon}^n\|_{H^2(\Omega)}^2
\| \nabla \uu_{k,\varepsilon}^n\| 
\|\A \uu_{k,\varepsilon}^n \|\\
&\leq \frac16  \|\partial_t \uu_{k,\varepsilon}^n \|^2+
c'_4 \Big( \| \A \uu_{k,\varepsilon}^n\|^2
+  \| \vp_{k,\varepsilon}^n\|_{H^2(\Omega)}^4
\| \nabla \uu_{k,\varepsilon}^n\|^2 \Big).
\end{align*}
On the other hand, by \eqref{dissest3app} we have
\begin{align*}
(\mu_{k,\varepsilon}^n\nabla \vp_{k,\varepsilon}^n, 
\partial_t \uu_{k,\varepsilon}^n)
&\leq \| \mu_{k,\varepsilon}^n\|_{L^6(\Omega)}
\| \nabla \vp_{k,\varepsilon}^n\|_{\mathbf{L}^3(\Omega)}
\| \partial_t \uu_{k,\varepsilon}^n\| \\
&\leq  \frac16  \|\partial_t \uu_{k,\varepsilon}^n \|^2+
c'_5 \| \nabla \vp_{k,\varepsilon}^n\|_{\mathbf{L}^3(\Omega)}^2 
(1+\| \nabla \mu_{k,\varepsilon}^n\|^2).
\end{align*}
Collecting the above estimates, we arrive at 
\begin{align}
\label{ut3D}
\| \partial_t \uu_{k,\varepsilon}^n\|^2
&\leq c'_6 \Big( \| \A \uu_{k,\varepsilon}^n\|^2+
\| \nabla  \uu_{k,\varepsilon}^n \|^6 \notag \\
&\quad +  \| \vp_{k,\varepsilon}^n\|_{H^2(\Omega)}^4
\| \nabla \uu_{k,\varepsilon}^n\|^2
+ \| \nabla \vp_{k,\varepsilon}^n\|_{\mathbf{L}^3(\Omega)}^2 
(1+\| \nabla \mu_{k,\varepsilon}^n\|^2) \Big).
\end{align}
Next, we take $\vv=\A \uu_{k,\varepsilon}^n$ in \eqref{e1app}. We recall that there exists $p_{k,\varepsilon}^n\in L^2(0,T;V)$ satisfying 
$-\Delta \uu_{k,\varepsilon}^n+\nabla p_{k,\varepsilon}^n
= \A \uu_{k,\varepsilon}^n$ almost everywhere in $\Omega \times (0,T)$ and the estimates \eqref{pressure2}. 
Thus, we find
\begin{align*}
\frac12 \ddt \| \nabla \uu_{k,\varepsilon}^n\|^2+
\nu_\ast \| \A \uu_{k,\varepsilon}^n\|^2&\leq 
-b(\uu_{k,\varepsilon}^n, \uu_{k,\varepsilon}^n,
\A \uu_{k,\varepsilon}^n)
-(\nu'(\vp_{k,\varepsilon}^n)
\nabla \vp_{k,\varepsilon}^n p_{k,\varepsilon}^n,\A \uu_{k,\varepsilon}^n) \\
&\quad + (\nu'(\vp_{k,\varepsilon}^n)D \uu_{k,\varepsilon}^n \nabla \vp_{k,\varepsilon}^n,
\A \uu_{k,\varepsilon}^n)
+(\mu_{k,\varepsilon}^n \nabla \vp_{k,\varepsilon}^n,
 \A \uu_{k,\varepsilon}^n).
\end{align*}
We address the right-hand side of the above differential inequality by using \eqref{LADY3} and \eqref{pressure2}. We have
\begin{align*}
-(\nu'(\vp_{k,\varepsilon}^n) \nabla 
&\vp_{k,\varepsilon}^n p_{k,\varepsilon}^n,\A \uu_{k,\varepsilon}^n)
 +(\nu'(\vp_{k,\varepsilon}^n)  D \uu_{k,\varepsilon}^n \nabla \vp_{k,\varepsilon}^n, 
\A \uu_{k,\varepsilon}^n)\\
&\leq C \| \nabla \vp_{k,\varepsilon}^n\|_{\mathbf{L}^6(\Omega)} 
\Big( \| p_{k,\varepsilon}^n\|_{L^3(\Omega)}+ 
\| D \uu_{k,\varepsilon}^n\|_{\mathbf{L}^3(\Omega)} \Big)
\|\A \uu_{k,\varepsilon}^n \| \\
&\leq C \| \vp_{k,\varepsilon}^n\|_{H^2(\Omega)}
\Big( \| p_{k,\varepsilon}^n\|^\frac12 
\| p_{k,\varepsilon}^n\|_V^\frac12 +
\| \nabla \uu_{k,\varepsilon}^n\|^\frac12 \| \A \uu_{k,\varepsilon}^n\|^\frac12 \Big)
\|\A \uu_{k,\varepsilon}^n \| \\
&\leq C \| \vp_{k,\varepsilon}^n\|_{H^2(\Omega)}
\Big( \|\nabla \uu_{k,\varepsilon}^n \|^\frac14 
\|\A \uu_{k,\varepsilon}^n\|^\frac34+ \| \nabla \uu_{k,\varepsilon}^n\|^\frac12 \| \A \uu_{k,\varepsilon}^n\|^\frac12 \Big) \| \A \uu_{k,\varepsilon}^n\|\\
&\leq \frac{\nu_\ast}{6} \|\A \uu_{k,\varepsilon}^n \|^2 +
c'_7 (1+\| \vp_{k,\varepsilon}^n\|_{H^2(\Omega)}^8)
\| \nabla \uu_{k,\varepsilon}^n\|^2,
\end{align*}
and
\begin{align*}
b(\uu_{k,\varepsilon}^n, \uu_{k,\varepsilon}^n,\A \uu_{k,\varepsilon}^n)
&\leq \| \uu_{k,\varepsilon}^n\|_{\mathbf{L}^6(\Omega)}
\| \nabla \uu_{k,\varepsilon}^n\|_{\mathbf{L}^3(\Omega)}
\| \A \uu_{k,\varepsilon}^n\| \\
&\leq \frac{\nu_\ast}{6} \|\A \uu_{k,\varepsilon}^n\|^2 + 
c'_8 \| \nabla  \uu_{k,\varepsilon}^n\|^6.
\end{align*}
Moreover, we have
\begin{align*}
(\mu_{k,\varepsilon}^n \nabla \vp_{k,\varepsilon}^n, \A \uu_{k,\varepsilon}^n)
&\leq \|\mu_{k,\varepsilon}^n\|_{L^6(\Omega)} 
\|\nabla \vp_{k,\varepsilon}^n\|_{\mathbf{L}^3(\Omega)} 
\| \A \uu_{k,\varepsilon}^n\|\\
&\leq  \frac{\nu_\ast}{6} \| \A \uu_{k,\varepsilon}^n\|^2
+c'_9 \|\nabla \vp_{k,\varepsilon}^n\|_{\mathbf{L}^3(\Omega)}^2
 (1+\| \nabla \mu_{k,\varepsilon}^n\|^2).
\end{align*}
Combining these estimates, we obtain
\begin{align}
\label{diffineq2app3D}
\frac12 &\ddt \| \nabla \uu_{k,\varepsilon}^n\|^2+ 
\frac{\nu_\ast}{2}  \| \A \uu_{k,\varepsilon}^n\|^2\notag \\
&\leq c'_{10}\Big(  (1+\| \vp_{k,\varepsilon}^n\|_{H^2(\Omega)}^8)
\| \nabla \uu_{k,\varepsilon}^n\|^2 
+ \| \nabla  \uu_{k,\varepsilon}^n\|^6
+\|\nabla \vp_{k,\varepsilon}^n\|_{\mathbf{L}^3(\Omega)}^2
 (1+\| \nabla \mu_{k,\varepsilon}^n\|^2) \Big).
\end{align}
Multiplying \eqref{ut3D} by $\varpi= \frac{\nu_\ast}{4c'_6}>0$ 
and summing up to \eqref{diffineq2app3D}, we obtain
\begin{align}
\label{diffineq2app23D}
\frac12 &\ddt \| \nabla \uu_{k,\varepsilon}^n\|^2+ 
\frac{\nu_\ast}{4}  \| \A \uu_{k,\varepsilon}^n\|^2+
\varpi \| \partial_t \uu_{k,\varepsilon}^n\|^2\notag \\
&\leq c'_{11} \Big(  (1+\| \vp_{k,\varepsilon}^n\|_{H^2(\Omega)}^8)
\| \nabla \uu_{k,\varepsilon}^n\|^2  
+ \| \nabla  \uu_{k,\varepsilon}^n\|^6
+\|\nabla \vp_{k,\varepsilon}^n\|_{\mathbf{L}^3(\Omega)}^2
 (1+\| \nabla \mu_{k,\varepsilon}^n\|^2) \Big).
\end{align}
Adding \eqref{diffineq1app3D} to \eqref{diffineq2app23D}, we find the differential inequality
\begin{align}
\label{diffineq3app3D}
\ddt \Lambda(\uu_{k,\varepsilon}^n,
\vp_{k,\varepsilon}^n)&
+ \frac{\nu_\ast}{8} \|\A \uu_{k,\varepsilon}^n\|^2
+\frac{\varpi}{2} \|\partial_t \uu_{k,\varepsilon}^n\|^2
+ \frac14 \| \nabla \partial_t \vp_{k,\varepsilon}^n\|^2 \notag\\
&\leq  (\partial_t \uu_{k,\varepsilon}^n \cdot \nabla \vp_{k,\varepsilon}^n, \mu_{k,\varepsilon}^n)+ c'_{12} \Big( (1+\| \vp_{k,\varepsilon}^n\|_{H^2(\Omega)}^8)
\| \nabla \uu_{k,\varepsilon}^n\|^2  
+ \| \nabla \uu_{k,\varepsilon}^n\|^6 \notag\\
&\quad  
+ ( 1+\| \nabla \vp_{k,\varepsilon}^n\|_{\mathbf{L}^3(\Omega)}^2 
+ \| \uu_{k,\varepsilon}^n\|_{\mathbf{L}^3(\Omega)}^2) 
( 1+\| \nabla \uu_{k,\varepsilon}^n\|^2+ \| \nabla \mu_{k,\varepsilon}^n\|^2) \Big),
\end{align}
where $\Lambda(\uu_{k,\varepsilon}^n, \vp_{k,\varepsilon}^n)$ is the same as in \eqref{Lambdadef}.
Owing to \eqref{LADY3} and \eqref{dissest1app3D}, we observe that
\begin{align*}
( \uu_{k,\varepsilon}^n\cdot 
\nabla \vp_{k,\varepsilon}^n,\mu_{k,\varepsilon}^n) 
&\leq \|\uu_{k,\varepsilon}^n\|_{\mathbf{L}^3(\Omega)} 
\| \nabla \vp_{k,\varepsilon}^n\| 
\| \mu_{k,\varepsilon}^n\|_{L^6(\Omega)}\\
&\leq \frac14 \| \nabla \uu_{k,\varepsilon}^n\|^2+ \frac14\| \nabla \mu_{k,\varepsilon}^n\|^2+c'_{13}.
\end{align*}
Thus, we deduce that
\begin{equation}
\label{lambdabelow3D}
\Lambda(\uu_{k,\varepsilon}^n, \vp_{k,\varepsilon}^n) \geq \frac14 \| \nabla \uu_{k,\varepsilon}^n\|^2
+ \frac14 \| \nabla \mu_{k,\varepsilon}^n\|^2-c'_{13}.
\end{equation}
On the other hand, we have
$$
\Lambda(\uu_{k,\varepsilon}^n, \vp_{k,\varepsilon}^n) 
\leq C \| \nabla \uu_{k,\varepsilon}^n\|^2
+ C\| \nabla \mu_{k,\varepsilon}^n\|^2+c'_{14}.
$$
Exploiting \eqref{vpapprH2}, we are led to
\begin{align}
\label{diffineq43D}
\ddt \Lambda(\uu_{k,\varepsilon}^n, \vp_{k,\varepsilon}^n) 
&+ \overline{\nu} \Big( \|\A \uu_{k,\varepsilon}^n\|^2
+ \| \partial_t \uu_{k,\varepsilon}^n\|^2
+  \| \nabla \partial_t \vp_{k,\varepsilon}^n\|^2 \Big)
\leq  c'_{15} \Big( 1+\Lambda^3(\uu_{k,\varepsilon}^n, \vp_{k,\varepsilon}^n)\Big),
\end{align}
where $\overline{\nu}=\frac14  \min \lbrace 1, \nu_\ast, \varpi\rbrace$.
In addition, following line by line the estimates performed  
in the proof of Theorem \ref{strong} for a uniform bound of
the initial condition, we easily get 
\begin{equation}
\label{lambda0-2}
\Lambda(\uu_\la^n(0), \vp_\la^n(0))\leq C(1+\| \uu_0\|_{\V_\sigma}+\| \mu_0\|_V),
\end{equation}
where $C$ is independent of $k$, $\varepsilon$ and $n$, provided that $n$ is sufficiently large.
Therefore, we infer from \eqref{diffineq43D} and \eqref{lambda0-2} that there exist a positive time $T^\ast$, depending on $\| \uu_0\|_{\V_\sigma}$ and $\| \mu_0\|_V$, and a positive constant $\overline{C}$ (independent of $k$, $\varepsilon$ and $n$) 
such that
$$
\sup_{0\leq t\leq T^\ast} \Lambda(\uu_{k,\varepsilon}^n(t), 
\vp_{k,\varepsilon}^n(t))+ \int_0^{T^\ast} 
\Big( \|\A \uu_{k,\varepsilon}^n(s)\|^2
+\| \partial_t \uu_{k,\varepsilon}^n(s)\|^2
+  \| \nabla \partial_t \vp_{k,\varepsilon}^n(s)\|^2 \Big)\, \d s \leq \overline{C}.
$$
A final passage to the limit allows us to recover the existence of a 
strong solution to the original problem \eqref{system}-\eqref{bcic}.
Moreover, the additional claimed regularities for $\vp$ and $\mu$ can be easily deduced as in the proof of Theorem \ref{strong}.

We are left to prove the uniqueness of strong solutions. 
Given two strong solutions $(\uu_1,\vp_1)$ and $(\uu_2,\vp_2)$, defined on the time interval $(0,T_0)$ with the same initial datum $(\uu_0,\vp_0)$, we define their difference 
$\uu=\uu_1-\uu_2$ and $\vp=\vp_1-\vp_2$. We observe that
the regularity of strong solutions allows us to follow the argument in the proof of Theorem \ref{weak-uniqueness}. Then, we have the differential inequality
\begin{align}
\label{inequniq3D}
\ddt \mathcal{H} +\nu_\ast \| \uu\|^2+ \frac12 \|\nabla \vp\|^2\leq \alpha^2 \mathcal{H}+ \sum_{k=1}^7 \mathcal{I}_k,
\end{align}
where the terms $\mathcal{H}$ and $\mathcal{I}_k$ are defined as above. 
In light of the regularity $\uu_i \in L^\infty(0,T_0;\V_\sigma)$ and $\vp_i \in L^\infty(0,T_0;W^{2,6}(\Omega))$, $i=1,2$, 
we can easily infer that
\begin{align*}
\mathcal{I}_1+\mathcal{I}_2+\mathcal{I}_5+\mathcal{I}_6+\mathcal{I}_7
\leq\frac{1}{6}\| \nabla \vp\|^2+\frac{\nu_\ast}{8} \| \uu\|^2 
+ C_1 \Big( \| \vp\|_{\ast}^2+ \| \uu\|_{\sharp}^2\Big),
\end{align*}
for some positive constant $C_1$.
On the other hand, by using \eqref{LADY3} and the boundedness of $\nu'$, we simply obtain
\begin{align*}
\mathcal{I}_3&\leq C \| \vp\|_{L^6(\Omega)} \| D\uu_2\|_{\mathbf{L}^3(\Omega)} 
\| \nabla \A^{-1} \uu\|\\
&\leq \frac{1}{12} \| \nabla \vp\|^2 +C_2 \| D \uu_2\|_{\mathbf{L}^3}^2 \| \uu\|_{\sharp}^2,
\end{align*}
and
\begin{align*}
\mathcal{I}_4&\leq \Big( \| \uu_1\|_{\mathbf{L}^6(\Omega)} 
+ \| \uu_2\|_{\mathbf{L}^6(\Omega)}\Big) \| \uu\| 
\| \nabla \A^{-1} \uu\|_{\mathbf{L}^3(\Omega)} \\
&\leq C \Big( \| \uu_1\|_{\mathbf{L}^6(\Omega)} 
+ \| \uu_2\|_{\mathbf{L}^6(\Omega)}\Big) \| \uu\|^{\frac32} \| \nabla \A^{-1}\uu\|^{\frac12}\\
&\leq \frac{\nu_\ast}{8} \| \uu\|^2+ C_3\| \uu\|_{\sharp}^2,
\end{align*}
for some positive constants $C_2$ and $C_3$.
Collecting the above estimates together, we end up with
$$
\ddt \mathcal{H}\leq C_4(1+\| D\uu_2\|_{L^3(\Omega)}^2)\mathcal{H}.
$$
Since $D\uu_2 \in L^2(0,T_0;\mathbf{L}^3(\Omega))$, the uniqueness of strong solutions immediately follows from the Gronwall lemma. 
\end{proof}

\appendix
\section{On Neumann Problems}
\label{Neumann-Laplace}
\setcounter{equation}{0}

\noindent
For any $\lambda\geq 0$, let us consider the Neumann problem 
\begin{equation}
\label{Neumannprob}
\begin{cases}
-\Delta u+\lambda u=f,\quad &\text{ in }\Omega,\\
\partial_\n u=0, \quad &\text{ on }\partial \Omega.
\end{cases}
\end{equation}
We introduce the operator $B_\lambda \in \mathcal{L}(V,V^{\prime })$ defined by
\begin{equation*}
\l B_\lambda u,v\r  =\int_{\Omega } ( \nabla u\cdot
\nabla v + \lambda u v ) \, \d x,\quad \forall \, u,v\in V.
\end{equation*}%
We consider the spaces
\begin{equation*}
V_{0}=\left\{ v\in V:\overline{v}=0\right\} ,\quad V_{0}^{\prime }
=\left\{ f\in V^{\prime
}:\overline{f}=0\right\},
\end{equation*}%
and we recall that $V=V_0\oplus \mathbb{R}$ and $V'= V_0'\oplus \mathbb{R}$.
The restriction $A_0$ of $B_0$ to $V_{0}$ being an isomorphism from $V_{0}$ onto $%
V_{0}^{\prime }$, we denote by $A_0^{-1}:V_{0}^{\prime
}\rightarrow V_{0}$ its inverse map. 
It is well known that for all $f\in V_{0}^{\prime }$, $A_0^{-1}f$ is the
unique $u\in V_{0}$ such that $\l A_0 u,v\r= \l f,v \r$, for all $v \in V$.
On account of the above definitions, we observe that
\begin{align}
\l  f, A_0^{-1}g\r &
=\int_{\Omega }\nabla (A_0^{-1}
f)\cdot \nabla (A_0^{-1}g)\, \d x,\quad \forall \, f,g\in V_{0}^{\prime }.
\label{propN2}
\end{align}%
Owing to \eqref{propN2}, it is straightforward to prove that
$
\Vert f\Vert _{\ast }:=\Vert \nabla A_0^{-1}f\Vert =\l f,
A_0^{-1}f\r^{\frac12}
$
is a norm on $V_{0}^{\prime }$ equivalent to the natural one.
In addition, for any $u \in H^{1}(0,T;V_{0}^{\prime })$, we have the chain rule
\begin{equation}
\l u_{t}\left(t\right),A_0^{-1}u\left( t\right) \r =
\frac12 \ddt \Vert u(t)\Vert _{\ast }^{2},\quad
\text{ a.e. }t\in (0,T).
\label{propN4}
\end{equation}%
Furthermore, due to regularity theory of the Neumann problem, we know that 
\begin{equation}
\label{neumannest}
\| \nabla A_0^{-1} f\|_{V}\leq C \| f\|,\quad \forall \, f\in H\cap V_0'.
\end{equation}
For any $\lambda>0$, we also consider the operator $A_\lambda= -\Delta +\lambda I$ as unbounded operator on $H$ with domain $D(A_\lambda)= \lbrace u\in H^2(\Omega): \partial_\n u=0 \text{ on } \partial \Omega \rbrace$. It is well-known that $A_\lambda$ is positive, unbounded, self-adjoint operator in $H$ with compact inverse (see, e.g., \cite[Chapter II, Section 2.2]{T}).
\smallskip

Next, we introduce the homogeneous Neumann elliptic problem with a logarithmic convex nonlinear term, that is, with the same $F$ as in \eqref{Singpot-form}-\eqref{H},
\begin{equation}
\label{ELL}
\begin{cases}
-\Delta u+F'(u)=f,\quad &\text{ in }\Omega,\\
\partial_\n u=0, \quad &\text{ on }\partial \Omega.
\end{cases}
\end{equation}

\noindent
Under the assumptions in Section \ref{2}, we have the 
following well-posedness and approximation result.

\begin{lemma}
\label{existenceNP}
Let $\Omega$ be a bounded domain in $\mathbb{R}^d$, $d=2,3$, with smooth boundary. 
Assume that $f\in H$. Then, there exists a unique solution $u$ to problem 
\eqref{ELL} such that $u\in H^2(\Omega)$, $F'(u)\in H$ and satisfies 
$-\Delta u+F'(u)=f$ for almost every $x\in \Omega$ and $\partial_{\n} u=0$ for almost every $x\in \partial \Omega$. Moreover, we have
\begin{equation}
\label{estbase}
\| u\|_{H^2(\Omega)}+\| F'(u)\|\leq C \big( 1+ \| f\|\big).
\end{equation}
Let us assume that the sequence 
$\left\lbrace f_k\right\rbrace \subset H$, and $f\in H$.
We consider the solutions $u_k$ and $u$ to problem \eqref{ELL} corresponding to $f_k$ and $f$, respectively. Then, $f_k\rightarrow f$ in  $H$, as $k\rightarrow \infty$, implies 
\begin{equation}
\label{approximationresult}
\| u_k-u\|_V\rightarrow 0,\text{ as } k\rightarrow \infty.
\end{equation}
\end{lemma}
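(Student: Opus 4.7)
The plan is to construct the solution by regularizing $F$, exploit strong convexity to control $F'$ in $L^2$, and recover $H^2$ regularity from Neumann-Laplacian theory. I would reuse the regular family $\{F_\varepsilon\}$ of Step 1 in the proof of Theorem \ref{strong}: each $F_\varepsilon \in \mathcal{C}^2(\mathbb{R})$ is $\theta$-strongly convex since $F''_\varepsilon \geq \theta > 0$. The strictly convex, coercive functional
\[
J_\varepsilon(v) = \frac12 \|\nabla v\|^2 + \int_\Omega F_\varepsilon(v)\, \d x - (f,v), \quad v\in V,
\]
thus admits a unique minimizer $u_\varepsilon \in V$, which weakly solves $-\Delta u_\varepsilon + F'_\varepsilon(u_\varepsilon)=f$ with $\partial_\n u_\varepsilon = 0$ on $\partial \Omega$.

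To extract bounds uniform in $\varepsilon$, I would test by $F'_\varepsilon(u_\varepsilon) \in V$ (valid since $F'_\varepsilon$ is Lipschitz) and integrate by parts using the Neumann condition, obtaining
\[
\int_\Omega F''_\varepsilon(u_\varepsilon) |\nabla u_\varepsilon|^2\, \d x + \|F'_\varepsilon(u_\varepsilon)\|^2 = (f,F'_\varepsilon(u_\varepsilon)),
\]
whence $\|F'_\varepsilon(u_\varepsilon)\| \leq \|f\|$ via Young's inequality. Strong convexity of $J_\varepsilon$ produces $\|u_\varepsilon\|_V \leq C(1+\|f\|)$, and standard Neumann elliptic regularity applied to $-\Delta u_\varepsilon = f - F'_\varepsilon(u_\varepsilon) \in H$ then yields the uniform bound $\|u_\varepsilon\|_{H^2(\Omega)} \leq C(1+\|f\|)$.

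\emph{The main obstacle is the passage to the limit}, specifically showing $|u|<1$ almost everywhere and identifying the weak $L^2$-limit of $F'_\varepsilon(u_\varepsilon)$ as $F'(u)$. After extracting $u_\varepsilon \rightharpoonup u$ in $H^2(\Omega)$, the compact embedding $H^2(\Omega) \hookrightarrow \mathcal{C}(\overline{\Omega})$, valid for $d\leq 3$, yields $u_\varepsilon \to u$ uniformly on $\overline{\Omega}$. If the set $\{|u|\geq 1\}$ had positive measure, the quadratic-extension definition of $F_\varepsilon$ together with $|F'(1-\varepsilon)|\to +\infty$ would force $|F'_\varepsilon(u_\varepsilon)|\to +\infty$ pointwise on that set, contradicting the uniform $L^2$-bound via Fatou; hence $|u|<1$ a.e. Since $F_\varepsilon\equiv F$ on each compact $[-1+\eta,1-\eta]$ once $\varepsilon<\eta$, the uniform convergence of $u_\varepsilon$ gives $F'_\varepsilon(u_\varepsilon) \to F'(u)$ pointwise a.e., and an Egorov/convexity argument identifies the weak $L^2$-limit as $F'(u)$. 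Lower semicontinuity then transfers \eqref{estbase} to $u$, and the limit $u$ solves \eqref{ELL} almost everywhere with the required regularity.

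Uniqueness and the stability statement \eqref{approximationresult} finally follow from a single energy identity: for two solutions $u_1, u_2$ with data $f_1, f_2 \in H$, testing the difference equation by $u_1 - u_2$ and using the strong monotonicity $(F'(u_1)-F'(u_2), u_1-u_2) \geq \theta \|u_1-u_2\|^2$ yields
\[
\|\nabla(u_1-u_2)\|^2 + \theta\|u_1 - u_2\|^2 \leq \|f_1-f_2\|\|u_1-u_2\|,
\]
hence $\|u_1 - u_2\|_V \leq C\|f_1 - f_2\|$. Choosing $f_1 = f_2$ gives uniqueness, while $f_k \to f$ in $H$ immediately produces $\|u_k - u\|_V \to 0$.
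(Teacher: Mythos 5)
Your proposal is correct, but it reaches existence by a genuinely different route than the paper. The paper treats $u\mapsto -\Delta u+F'(u)$ as the subdifferential of the proper, convex, lower semicontinuous functional $\mathcal{F}(u)=\int_\Omega \tfrac12|\nabla u|^2+F(u)\,\d x$ on $H$, invokes the known identification of $D(\partial\mathcal{F})$ and of $\partial\mathcal{F}(u)=-\Delta u+F'(u)$ from the maximal monotone operator literature (Br\'ezis, Barbu, Abels--Wilke), and gets surjectivity from the coercivity $(\partial\mathcal{F}(u)-\partial\mathcal{F}(v),u-v)\geq\theta\|u-v\|^2$; the estimate \eqref{estbase} is then only cited. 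You instead give a self-contained constructive argument: regularize $F$ by the quadratic extensions $F_\varepsilon$, minimize the strictly convex coercive functional $J_\varepsilon$, derive the key uniform bound $\|F'_\varepsilon(u_\varepsilon)\|\leq\|f\|$ by testing with $F'_\varepsilon(u_\varepsilon)$ and using $F''_\varepsilon\geq 0$, upgrade to $H^2$ by linear Neumann regularity, and pass to the limit using $H^2\hookrightarrow \mathcal{C}(\overline\Omega)$, the Fatou argument to exclude $|u|\geq 1$ on a set of positive measure, and a.e. convergence plus the uniform $L^2$ bound to identify the weak limit of $F'_\varepsilon(u_\varepsilon)$ with $F'(u)$. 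This is precisely the machinery hidden inside the cited identification of the subdifferential, so your version is longer but actually proves \eqref{estbase} rather than quoting it; the paper's version is shorter at the cost of leaning on nontrivial external results. Two cosmetic points: in the derivation of $\|F'_\varepsilon(u_\varepsilon)\|\leq\|f\|$ no integration by parts is needed, only the weak formulation tested against $F'_\varepsilon(u_\varepsilon)\in V$ together with the chain rule $\nabla F'_\varepsilon(u_\varepsilon)=F''_\varepsilon(u_\varepsilon)\nabla u_\varepsilon$; and the ``Egorov/convexity argument'' can be replaced by the standard fact that a sequence bounded in $L^2$ and converging almost everywhere converges weakly in $L^2$ to its pointwise limit. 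Your final step (uniqueness and \eqref{approximationresult} via the strong monotonicity $(F'(u_1)-F'(u_2),u_1-u_2)\geq\theta\|u_1-u_2\|^2$) coincides with the paper's.
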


\begin{proof}
The existence of a solution $u$ to problem \eqref{ELL} can be proved relying on the theory of maximal monotone operator. We define the functional on $H$
$$
\mathcal{F}(u)= \int_{\Omega} \frac12 \| \nabla u\|^2 + F(u)\, \d x,
$$
with domain $D(\mathcal{F})=\lbrace u\in H^1(\Omega): \| u\|_{L^{\infty}(\Omega)}\leq 1\rbrace$. We observe that $\mathcal{F}$ is a proper, lower semi-continuous and convex functional. 
Now, we consider the subdifferential $\partial \mathcal{F}$ of $\mathcal{F}$, defined as $w\in \partial \mathcal{F}(u)$ if and only if, for all $v\in H$, $\mathcal{F}(v)\geq \mathcal{F}(u)+ (w,v-u)$.
Then, $\partial \mathcal{F}$ is a maximal monotone operator on $H$ (see \cite{BREZIS}). Moreover, it is well-known that 
$D(\partial \mathcal{F})= \lbrace u\in H^2(\Omega): F'(u)\in H, \partial_{\n} u= 0 \text{ on } \partial \Omega \rbrace$ and 
$\partial \mathcal{F}(u)= -\Delta u + F'(u)$ (see \cite{BARBU,AW}).
By \eqref{H}, we deduce that $\partial \mathcal{F}$ is also coercive, namely $(\partial \mathcal{F}(u)-\partial \mathcal{F}(v), u-v)\geq \theta \| u-v\|^2$, for all $u,v \in D(\partial \mathcal{F})$, where $\theta$ is the same as in \eqref{H}. In turn, this implies that $\partial \mathcal{F}$ is surjective on $H$. In addition, the estimate \eqref{estbase} can be proved as in \cite{AW,CG}. 
Finally, exploiting \eqref{H} once more, we can easily infer the uniqueness of solutions and the approximation result \eqref{approximationresult} to problem \eqref{ELL}.   
\end{proof}

We now report some elliptic estimates, whose proofs can be found in \cite{A,CG,GGM}.
\begin{theorem}
\label{ell2}
Let $\Omega$ be a bounded domain in $\mathbb{R}^d$ with smooth boundary. 
Assume that $u$ is the solution to problem \eqref{ELL}. We have the following:
\smallskip

\noindent
$\diamond$ Let $d=2,3$ and $f\in L^p(\Omega)$, where $2\leq p\leq \infty$.
Then, we have $$
\|F'(u)\|_{L^p(\Omega)}\leq \| f\|_{L^p(\Omega)}.
$$
\item[$\diamond$] Let $d=2,3$ and $f\in V$.
Then, we have
$$
\| \Delta u\|\leq \| \nabla u\|^{\frac12} \| \nabla f\|^{\frac12}.
$$
In addition, there exists a positive constant $C=C(p)$ such that
$$
\|u \|_{W^{2,p}(\Omega)}+ \|F'(u)\|_{L^p(\Omega)}
\leq C \big( 1+\| f\|_{V}\big),
$$
where $p=6$ if $d=3$ and for any $p\geq 2$ if $d=2$.
\item[$\diamond$] Let $d=2$ and $f \in V$. Assume that $F$
satisfies
$$
F''(s)\leq e^{C|F'(s)|+C}, \quad \forall \, s \in (-1,1),
$$
for some positive constant $C$.
Then, for any $p\geq 1$, there exists a positive constant $C=C(p)$ such that
$$
\| F''(u)\|_{L^p(\Omega)}\leq C\big(1+e^{C\|f\|_V^2}\big).
$$

\end{theorem}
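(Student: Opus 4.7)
The plan is to prove the three assertions in turn, each by testing the equation $-\Delta u+F'(u)=f$ against a carefully chosen function and exploiting either the monotonicity $F''\geq \theta>0$ or the growth hypothesis on $F''$. Throughout I use the homogeneous Neumann condition on $u$ (and consequently on $F'(u)$, since $\partial_\n F'(u)=F''(u)\partial_\n u=0$).

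\textbf{First assertion.} I multiply the equation by $|F'(u)|^{p-2}F'(u)$ and integrate. After integration by parts, the boundary term vanishes and
\[
(p-1)\int_\Omega |F'(u)|^{p-2}F''(u)|\nabla u|^2\, dx+\int_\Omega |F'(u)|^p\, dx=\int_\Omega f|F'(u)|^{p-2}F'(u)\, dx.
\]
The first integral is nonnegative (as $F''\geq\theta>0$) and is dropped; Hölder on the right gives $\|F'(u)\|_{L^p}\leq \|f\|_{L^p}$. The case $p=\infty$ follows either by letting $p\to\infty$ in this bound, or by a Stampacchia truncation with test function $(F'(u)-\|f\|_{L^\infty})_+$ (and the symmetric one), which forces the sets where $|F'(u)|>\|f\|_{L^\infty}$ to have measure zero.

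\textbf{Second assertion.} Testing with $-\Delta u$ and using $\partial_\n u=0$ to integrate by parts on both sides yields
\[
\|\Delta u\|^2+\int_\Omega F''(u)|\nabla u|^2\, dx=\int_\Omega\nabla f\cdot\nabla u\, dx\leq \|\nabla f\|\,\|\nabla u\|.
\]
Dropping the nonnegative second term gives the stated $H^2$-estimate. For the $W^{2,p}$ bound, I rewrite the equation as $-\Delta u=f-F'(u)$. In $d=3$, the embedding $V\hookrightarrow L^6$ together with the first assertion (applied with $p=6$) yields $\|f-F'(u)\|_{L^6}\leq 2\|f\|_{L^6}\leq C\|f\|_V$, and Calderón–Zygmund regularity for the Neumann Laplacian, combined with the a priori bound $|u|\leq 1$ for the $L^p$-norm of $u$, delivers $\|u\|_{W^{2,6}}\leq C(1+\|f\|_V)$; the bound on $\|F'(u)\|_{L^6}$ is immediate. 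In $d=2$, $V\hookrightarrow L^p$ for every finite $p$, and the same argument extends to any $p\geq 2$.

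\textbf{Third assertion.} By the growth hypothesis,
\[
\|F''(u)\|_{L^p}^p\leq e^{pC}\int_\Omega e^{pC|F'(u)|}\, dx.
\]
I expand the exponential as a power series and invoke the first assertion:
\[
\int_\Omega e^{pC|F'(u)|}\, dx=\sum_{k\geq 0}\frac{(pC)^k}{k!}\int_\Omega |F'(u)|^k\, dx\leq \sum_{k\geq 0}\frac{(pC)^k}{k!}\|f\|_{L^k}^k.
\]
In two dimensions the Brezis–Wainger form of Trudinger–Moser gives $\|f\|_{L^k}\leq C\sqrt{k}\,\|f\|_V$ for every $k\geq 2$, and Stirling's estimate $k!\geq (k/e)^k$ reduces the series to $\sum_k(pCe\|f\|_V/\sqrt{k})^k$, which by the classical Gaussian-summation trick is bounded by $C\,e^{C'p^2\|f\|_V^2}$. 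Taking $p$-th roots produces the announced inequality $\|F''(u)\|_{L^p}\leq C(p)(1+e^{C(p)\|f\|_V^2})$.

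The principal difficulty lies in the third assertion, specifically in avoiding a direct appeal to $F'(u)\in V$ — the estimate $\|\nabla F'(u)\|=\|F''(u)\nabla u\|$ would be circular, since controlling it already requires controlling $F''(u)$. The resolution, and the heart of the argument, is that one only needs the $L^k$ bounds on $F'(u)$ supplied by the first assertion, combined with the sharp $\sqrt{k}$-rate of the two-dimensional embedding $V\hookrightarrow L^k$; this is exactly what converts a uniform-in-$k$ polynomial growth of moments into the exponential-quadratic bound $e^{C\|f\|_V^2}$.
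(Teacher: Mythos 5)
Your proof is correct, and it matches the standard route. Note that the paper itself does not prove Theorem \ref{ell2}: it only records the statement and refers to \cite{A,CG,GGM}, and your three arguments are essentially the ones found in those references --- testing with $|F'(u)|^{p-2}F'(u)$ and dropping the nonnegative term $(p-1)\int_\Omega |F'(u)|^{p-2}F''(u)|\nabla u|^2\,\d x$ for the $L^p$ bound, testing with $-\Delta u$ plus Calder\'on--Zygmund theory for the Neumann Laplacian for the $H^2$ and $W^{2,p}$ bounds, and the moment expansion of $\mathrm{e}^{pC|F'(u)|}$ combined with the two-dimensional estimate $\|f\|_{L^k(\Omega)}\leq C\sqrt{k}\,\|f\|_V$ (with $C$ independent of $k$) for the $F''$ bound. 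The only point to flag is that your test-function computations are formal: since a priori $F'(u)$ is only known to belong to $H$ (Lemma \ref{existenceNP}), the identities involving $\nabla\big(|F'(u)|^{p-2}F'(u)\big)$ and $\int_\Omega F''(u)|\nabla u|^2\,\d x$ should be justified by running the same estimates on a regularized problem (e.g.\ with $F$ replaced by the approximations $F_\varepsilon$ of \eqref{defF1s}, or with a truncation of $F'$) and then passing to the limit, exactly as in the cited references; this is routine and does not affect the structure or the constants of your argument.
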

\smallskip 
 
\section{On Stokes Operators}
\label{stokesappendix} 
\setcounter{equation}{0}

\noindent
We consider the homogeneous Stokes problem 
\begin{equation}
\label{STOKES}
\begin{cases}
-\Delta \uu+ \nabla p=\f, \quad &\text{in} \ \Omega,\\
\mathrm{div}\, \uu=0,\quad &\text{in} \ \Omega,\\
\uu=0, \quad &\text{on}\ \partial\Omega.
\end{cases}
\end{equation}
First, we introduce the Stokes operator as the map $\A:\V_\sigma\rightarrow\V_\sigma'$ such that
$$
\l \A \uu,\vv\r= (\nabla \uu,\nabla \vv ), 
\quad \forall \, \uu, \vv \in \V_\sigma,
$$
namely $\A$ is the canonical isomorphism from $\V_\sigma$ onto $\V_\sigma'$. We denote by $\A^{-1}: \V_\sigma' \rightarrow \V_\sigma$ the inverse map of the Stokes operator. That is,  given $\f \in \V_\sigma'$, there exists a unique $\uu=\A^{-1} \f \in \V_\sigma$ such that
$$
( \nabla \A^{-1} \f, \nabla \vv )=\l \f, \vv\r, \quad \forall \, \vv \in \V_\sigma.
$$
It follows that 
$
\| \f \|_{\sharp}:= \| \nabla \A^{-1} \f \|=\l \f,\A^{-1} \f \r^{\frac12}
$ 
is an equivalent norm on $\V_\sigma'$ and the chain rule
$$
\l \f_t (t), \A^{-1} \f (t)\r= \frac12 \ddt \| \f(t)\|_{\sharp}^2, \quad
\text{ a.e. }t\in (0,T),
$$ 
holds for any $\f\in H^{1}(0,T;\V_\sigma')$.
In order to recover the pressure $p$, the well-known De Rham result implies that, if $\f \in \H^{-1}(\Omega)$, there exists $p\in H$ (such that $\overline{p}=0$) such that $\nabla p= \Delta \uu+\f$ in the distributional sense. In addition, by \cite[Proposition 1.2]{Temam} we know that
\begin{equation}
\label{pH-1}
\| p\|\leq C \| \f\|_{\H^{-1}(\Omega)}.
\end{equation}
Let us now report the regularity theory of the Stokes problem \eqref{STOKES} (see \cite{CATT}). 
Assuming that $\f \in \H$, then there exist a unique $\uu \in \mathbf{H}^2(\Omega)\cap \V_\sigma$ and $p \in V$ (unique up to a constant) such that 
$-\Delta \uu+ \nabla p=\f$ almost everywhere in $\Omega$.
Moreover, there exists a constant $C$ such that
\begin{equation}
\label{stokesH2}
\| \uu\|_{\H^2(\Omega)}+ \| p\|_{V}\leq C\| \f\|.
\end{equation}
We denote by $P: \H\rightarrow \H_\sigma$ the Helmholtz-Leray orthogonal projection from $\H$ onto $\H_\sigma$. 
We recall that $P$ is a bounded operator from $\V$ into 
$\V\cap \H_\sigma$, namely there exists a positive constant $C$ such that
$$
\| P\vv\|_{\V}\leq C \|\vv\|_{\V}, \quad \forall \vv \in \V.
$$
We also report that $P\nabla v=\mathbf{0}$ for any $v\in V$.
Next, we consider the Stokes operator as an unbounded operator on $\H_\sigma$ with domain $D(\A)=\lbrace \uu\in \V_\sigma: \A \uu\in \H_\sigma\rbrace$. 
It is well known that $\A$ is a positive, unbounded, self-adjoint operator in $\H_\sigma$ with compact inverse (see, e.g., \cite{Temam}). In particular, we have
$$
\A \uu= P(-\Delta\uu ), \quad \forall \, \uu \in D(\A),\quad \text{where} 
\quad D(\A)=\mathbf{H}^2(\Omega)\cap \V_\sigma.
$$
Thanks to the above regularity results, we deduce that the operator $\A^{-1}: \H_\sigma \rightarrow \mathbf{H}^2(\Omega)\cap \V_\sigma$ is such that, for any $\f \in \H_\sigma$, there exist $\A^{-1} \f \in D(\A)$ and $p\in V$ which solve 
\begin{equation}
\label{stokesinverse}
-\Delta \A^{-1} \f+ \nabla p = \f.
\end{equation}
In turn, this entails that $\A \A^{-1}\f=\f$. Owing to \eqref{stokesH2}, we have 
\begin{equation}
\label{esthigh}
\|\A^{-1}\f\|_{\H^2(\Omega)}+\| p\|_V\leq C \|\f \|.
\end{equation}

We are now in position to find an $L^2$-estimate of the pressure $p$ in \eqref{stokesinverse} in terms of $\|\nabla \A^{-1} \f \|$.
Let us first report a preliminary interpolation result (see \cite{Necas}).

\begin{lemma}
\label{interpolationboundary}
Let $\Omega$ be a Lipschitz domain in $\mathbb{R}^{d}$, $d=2,3$, with compact boundary. Then, there exists a positive constant $C$ such that
\begin{equation}
\| f\|_{L^2(\partial \Omega)}\leq C \| f\|^{\frac12} \|f\|_V^{\frac12}, \quad \forall \, f\in V.
\end{equation}
\end{lemma}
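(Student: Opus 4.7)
The plan is to prove the boundary trace estimate by a standard divergence-theorem identity combined with Cauchy--Schwarz, which is a cleaner route than going through fractional Sobolev interpolation. First I would reduce to smooth functions by density: $\mathcal{C}^\infty(\overline{\Omega})$ is dense in $V=H^1(\Omega)$ for Lipschitz $\Omega$ with compact boundary, and the trace operator is continuous from $V$ to $L^2(\partial\Omega)$, so it suffices to establish the inequality for $f\in \mathcal{C}^\infty(\overline{\Omega})$ and then pass to the limit.

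Next I would construct an auxiliary vector field. For a Lipschitz domain with compact boundary there exists $\vec{N}\in \mathcal{C}^\infty(\overline{\Omega};\mathbb{R}^d)$ with $\vec{N}\cdot\n\geq c_0>0$ almost everywhere on $\partial\Omega$ (a transversal vector field; this is classical and follows from a partition-of-unity argument using local Lipschitz graphs together with smoothing). Applying the divergence theorem to the vector field $f^2\vec{N}$, I obtain
\begin{equation*}
\int_{\partial\Omega} f^2\,\vec{N}\cdot\n \,\d S
= \int_\Omega \mathrm{div}(f^2 \vec{N})\,\d x
= \int_\Omega \bigl(2 f\,\nabla f\cdot\vec{N} + f^2\,\mathrm{div}\,\vec{N}\bigr)\d x.
\end{equation*}
Using $\vec{N}\cdot\n\geq c_0$ on the left, boundedness of $\vec{N}$ and $\mathrm{div}\,\vec{N}$ on the right, and Cauchy--Schwarz in the integrals, I arrive at
\begin{equation*}
c_0\,\| f\|_{L^2(\partial\Omega)}^2 \leq C\bigl( \| f\|\,\|\nabla f\| + \| f\|^2\bigr).
\end{equation*}

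Finally I would absorb the lower-order term. Since $\| f\|\leq \| f\|_V$ and $\|\nabla f\|\leq \| f\|_V$, the right-hand side is bounded by $C\| f\|\,\| f\|_V$, which after taking square roots gives exactly $\| f\|_{L^2(\partial\Omega)}\leq C\| f\|^{1/2}\| f\|_V^{1/2}$. I do not anticipate a serious obstacle here: the only nontrivial point is the existence of the transversal field $\vec{N}$ on a general Lipschitz domain, but this is standard, and everything else is an elementary computation. The result can equivalently be read as the continuity of the trace $V\hookrightarrow H^{1/2}(\partial\Omega)\hookrightarrow L^2(\partial\Omega)$ combined with the interpolation identity $[H,V]_{1/2}=H^{1/2}(\Omega)$, but the direct divergence-theorem proof avoids invoking fractional spaces.
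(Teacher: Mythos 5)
Your proof is correct. Note that the paper does not actually prove this lemma: it simply cites N\v{e}cas's book, so there is no ``paper proof'' to match. What you have written is the standard self-contained derivation of the multiplicative trace inequality, and it is essentially the argument that underlies the cited reference: take a vector field $\vec{N}$ transversal to $\partial\Omega$, apply the divergence theorem to $f^{2}\vec{N}$, and use Cauchy--Schwarz; the identity
\begin{equation*}
\int_{\partial\Omega} f^{2}\,\vec{N}\cdot\mathbf{n}\,\mathrm{d}S=\int_{\Omega}\bigl(2f\,\nabla f\cdot\vec{N}+f^{2}\,\mathrm{div}\,\vec{N}\bigr)\,\mathrm{d}x
\end{equation*}
together with $\vec{N}\cdot\mathbf{n}\geq c_{0}>0$ gives $c_{0}\|f\|_{L^{2}(\partial\Omega)}^{2}\leq C\|f\|\,\|f\|_{V}$, which is the claim. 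The reduction to smooth functions is legitimate ($\mathcal{C}^{\infty}(\overline{\Omega})$ is dense in $H^{1}(\Omega)$ for Lipschitz domains and the trace is continuous into $L^{2}(\partial\Omega)$, so both sides pass to the limit). Two minor points worth recording: first, for a merely Lipschitz boundary the transversal field is usually obtained as a Lipschitz (or mollified, hence smooth) field built from the local graph directions via a partition of unity, and one only needs $\vec{N}$ and $\mathrm{div}\,\vec{N}$ bounded, which your construction provides; second, if one wanted the statement for an unbounded $\Omega$ with compact boundary one should take $\vec{N}$ compactly supported near $\partial\Omega$, though in the paper's application $\Omega$ is bounded so this is moot. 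Your approach has the advantage of being entirely elementary and of avoiding fractional Sobolev spaces and interpolation theory, at the modest cost of constructing the transversal field.
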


\noindent
We have the following result.

\begin{lemma}
Let $d=2,3$ and $\f \in \H_\sigma$. Then, there exists a positive constant $C$ (independent of $\f$) such that
\begin{equation}
\label{pL2}
\| p\|\leq C \| \nabla \A^{-1}\f\|^\frac12 
\| \f\|^\frac12.
\end{equation}
\end{lemma}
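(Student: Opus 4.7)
The key observation is that the pressure $p$ is harmonic, which allows one to reduce the $L^2$-estimate to a boundary integral controlled via the Ne\v{c}as trace inequality (Lemma \ref{interpolationboundary}). Setting $\uu=\A^{-1}\f\in D(\A)=\mathbf{H}^2(\Omega)\cap\V_\sigma$, we have $-\Delta\uu+\nabla p=\f$ a.e.~in $\Omega$ with $p\in V$, $\overline{p}=0$, and $\|\uu\|_{\mathbf{H}^2}+\|p\|_V\leq C\|\f\|$ by \eqref{esthigh}. Since $\f\in\H_\sigma$ means $\mathrm{div}\,\f=0$ distributionally, taking the divergence of the Stokes equation yields $\Delta p=0$ in $\Omega$. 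I would then introduce the auxiliary function $q\in H^2(\Omega)$ solving the Neumann problem $-\Delta q=p$ in $\Omega$, $\partial_\n q=0$ on $\partial\Omega$, $\overline{q}=0$; standard regularity for the Neumann Laplacian gives $\|q\|_{H^2}+\|\nabla q\|\leq C\|p\|$.

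Next, using Green's identity together with $\Delta p=0$ and $\partial_\n q=0$, one finds
\[
\|p\|^2=\int_\Omega p\,(-\Delta q)\,\d x=\int_\Omega\nabla p\cdot\nabla q\,\d x=\int_\Omega(\f+\Delta\uu)\cdot\nabla q\,\d x.
\]
The contribution $\int_\Omega\f\cdot\nabla q\,\d x$ vanishes: since $\f\in\H_\sigma$ implies $\mathrm{div}\,\f=0$ and $\f\cdot\n=0$ in $H^{-1/2}(\partial\Omega)$, integration by parts annihilates both resulting terms. A further integration by parts in the remaining piece gives
\[
\int_\Omega\Delta\uu\cdot\nabla q\,\d x=-\int_\Omega\nabla\uu:\nabla^2 q\,\d x+\int_{\partial\Omega}(\partial_\n\uu)\cdot\nabla q\,\d\sigma,
\]
with volume term bounded directly by $\|\nabla\uu\|\,\|q\|_{H^2}\leq C\|\nabla\A^{-1}\f\|\,\|p\|$.

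The crux is the boundary term. Applying Lemma \ref{interpolationboundary} componentwise to each entry of $\nabla q$, one has $\|\nabla q\|_{L^2(\partial\Omega)}\leq C\|\nabla q\|^{1/2}\|q\|_{H^2}^{1/2}\leq C\|p\|$, while componentwise application to $\nabla\uu$ yields $\|\partial_\n\uu\|_{L^2(\partial\Omega)}\leq C\|\nabla\uu\|^{1/2}\|\uu\|_{H^2}^{1/2}\leq C\|\nabla\A^{-1}\f\|^{1/2}\|\f\|^{1/2}$ via the Stokes regularity $\|\uu\|_{H^2}\leq C\|\f\|$. Combining everything and dividing by $\|p\|$ yields
\[
\|p\|\leq C\|\nabla\A^{-1}\f\|+C\|\nabla\A^{-1}\f\|^{1/2}\|\f\|^{1/2}.
\]
Finally, the elementary energy identity $\|\nabla\A^{-1}\f\|^2=\l\f,\A^{-1}\f\r\leq\|\f\|\,\|\A^{-1}\f\|\leq C\|\f\|\,\|\nabla\A^{-1}\f\|$ gives $\|\nabla\A^{-1}\f\|\leq C\|\f\|$, so the first summand is dominated by the second and \eqref{pL2} follows. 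The main subtlety I foresee is keeping the boundary manipulation rigorous: $\partial_\n p$ is not a priori in $L^2(\partial\Omega)$, so one must resist taking traces of $p$ itself and keep the identity in the weak form $\int\nabla p\cdot\nabla q$, where $\nabla p=\f+\Delta\uu\in\mathbf{L}^2(\Omega)$ is perfectly legitimate.
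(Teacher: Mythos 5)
Your proof is correct, but it reaches \eqref{pL2} by a genuinely different mechanism than the paper. The paper never introduces an auxiliary potential: it estimates $\| \f\|_{\H^{-1}(\Omega)}$ directly by testing the identity $\f = P(-\Delta)\A^{-1}\f$ against an arbitrary $\vv\in \H^1_0(\Omega)$, moving the Helmholtz projection onto $\vv$, integrating by parts once to produce the boundary term $\int_{\partial\Omega}\nabla \A^{-1}\f\,\n\cdot P\vv\,\d\sigma$, and then invoking the De Rham estimate \eqref{pH-1} to convert the resulting bound $\| \f\|_{\H^{-1}(\Omega)}\leq C\|\nabla \A^{-1}\f\|+C\|\nabla\A^{-1}\f\|^{1/2}\|\f\|^{1/2}$ into the pressure bound. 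You instead compute $\|p\|^2$ against the Neumann potential $-\Delta q=p$, kill $\int_\Omega \f\cdot\nabla q\,\d x$ by Helmholtz orthogonality, and integrate by parts on $\int_\Omega\Delta\uu\cdot\nabla q\,\d x$; the decisive ingredients are nevertheless identical in both arguments, namely the trace interpolation of Lemma \ref{interpolationboundary} applied to first derivatives together with the $H^2$ Stokes regularity \eqref{esthigh}, and both proofs land on the same intermediate inequality before absorbing $\|\nabla\A^{-1}\f\|\leq C\|\nabla\A^{-1}\f\|^{1/2}\|\f\|^{1/2}$. What the paper's route buys is the reusable interpolation-type estimate for $\|\f\|_{\H^{-1}(\Omega)}$ itself (of which \eqref{pL2} is a corollary via \eqref{pH-1}); what yours buys is independence from the De Rham estimate, at the cost of the extra Neumann solve for $q$. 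Two cosmetic remarks: the harmonicity of $p$ announced in your plan is never actually used (your identity $\|p\|^2=\int_\Omega\nabla p\cdot\nabla q\,\d x$ only needs $p\in V$ and $\partial_\n q=0$), and the division by $\|p\|$ at the end should be accompanied by the trivial observation that \eqref{pL2} holds when $p=0$.
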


\begin{proof}
Thanks to \eqref{pH-1}, we need to control $\|\f \|_{\H^{-1}(\Omega)}$ by means of $\| \f\|_\sharp$. To this end, let us consider $\vv \in \H_0^1(\Omega)$ with $\| \vv\|_{\H_0^1(\Omega)}\leq 1$. By exploiting the integration by parts, we find
\begin{align*}
(\f,\vv)&= (P(-\Delta) \A^{-1}\f, \vv)\\
&=(-\Delta \A^{-1} \f, P\vv)\\
&= (\nabla \A^{-1}\f, \nabla P\vv)- 
\int_{\partial \Omega} \nabla \A^{-1}\f \n \cdot P\vv \, \d \sigma.
\end{align*}
We recall that the classical trace theorem implies 
$\| P\vv\|_{L^2(\partial \Omega)}\leq C\| P\vv\|_V$. In addition, by the properties of the Helmholtz-Leray operator and the Poincar\'{e} inequality, we have $\| P\vv\|_{\V}\leq C\| \vv\|_{\H_0^1(\Omega)}$. Then, we deduce that
$$
\| \f\|_{\H^{-1}(\Omega)}\leq C \| \nabla \A^{-1}\f\|+ 
C \| \nabla \A^{-1}\f\|_{\mathbf{L}^2(\partial \Omega)}.
$$
An application of Lemma \ref{interpolationboundary}, together with \eqref{esthigh}, implies that
$$
\| \f\|_{\H^{-1}(\Omega)}\leq C \| \nabla \A^{-1}\f\|+ 
C \| \nabla \A^{-1}\f\|^{\frac12} \| \f\|^{\frac12}.
$$
Thus, the desired inequality \eqref{pL2} immediately follows.
\end{proof}

\smallskip
 
Finally, we consider the homogeneous Stokes problem with nonconstant viscosity depending on a given measurable function $\vp$. The system reads as follows
\begin{equation}
\label{STOKESVISC}
\begin{cases}
-\mathrm{div} (\nu(\varphi)D \uu)+ \nabla \pi=\f, \quad &\text{in} \ \Omega,\\
\mathrm{div}\, \uu=0,\quad &\text{in} \ \Omega,\\
\uu=0, \quad &\text{on}\ \partial\Omega,
\end{cases}
\end{equation}
where the coefficient $\nu$ fulfils the assumptions stated in Section \ref{2}.
We report a regularity result whose proof has been provided in \cite[Sec. 4, Lemma 4]{A}.

\begin{theorem}
\label{STOKESVISCREG}
Let $d=2$, $\vp \in W^{1,r}(\Omega)$, with $2<r\leq \infty$, and 
$\f\in \mathbf{L}^p(\Omega)$, with $1\leq p<\infty$.
Assume that $\uu \in \mathbf{V}_\sigma$ is a weak solution to \eqref{STOKESVISC}, i.e.
$$
(\nu(\varphi)D \uu,D\vv)=( \f,\vv ),\quad \forall \, \vv \in \mathbf{V}_\sigma.
$$
Then, there exists $C>0$, depending on $r$ and $p$, such that
\begin{equation}
\label{regstokes}
\| \uu\|_{\mathbf{W}^{2,p'}(\Omega)} \leq
C
\big( 1+\| \nabla \vp\|_{\mathbf{L}^r(\Omega)} \big)
\big( \| \f\|_{\mathbf{L}^p(\Omega)}+\|\nabla \uu \|\big),
\end{equation}
where $\frac{1}{p'}=\frac{1}{p}+\frac{1}{r}$, provided that $p'>1$.
\end{theorem}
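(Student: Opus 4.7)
The approach is to convert \eqref{STOKESVISC} into a constant-coefficient Stokes system with a perturbative right-hand side, apply the classical Cattabriga $\mathbf{W}^{2,s}$ regularity, and bootstrap the Lebesgue exponent up to $s=p'$.

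\textbf{Step 1 (Reformulation as constant-coefficient Stokes).} Since $\mathrm{div}\,\uu=0$, one has $\mathrm{div}(D\uu)=\tfrac12\Delta\uu$, so the momentum equation reads
$$
-\tfrac12 \nu(\vp)\Delta\uu \;-\; \nu'(\vp)\nabla\vp\cdot D\uu \;+\; \nabla\pi \;=\; \f.
$$
Dividing by $\tfrac12\nu(\vp)\geq \nu_\ast>0$ (cf.~\eqref{ipo-nu}) and using the identity $\tfrac{2}{\nu(\vp)}\nabla\pi = \nabla\bigl(\tfrac{2\pi}{\nu(\vp)}\bigr) + \tfrac{2\pi\,\nu'(\vp)}{\nu(\vp)^2}\nabla\vp$, I rewrite the problem as the standard Stokes system
$$
-\Delta\uu + \nabla q \;=\; \mathbf{G},\qquad \mathrm{div}\,\uu=0,\qquad \uu|_{\partial\Omega}=\mathbf{0},
$$
with modified pressure $q:=2\pi/\nu(\vp)$ (normalized to zero mean) and source
$$
\mathbf{G} \;=\; \frac{2}{\nu(\vp)}\f \;+\; \frac{2\nu'(\vp)}{\nu(\vp)}\nabla\vp\cdot D\uu \;-\; \frac{2\nu'(\vp)}{\nu(\vp)^2}\,\pi\,\nabla\vp.
$$

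\textbf{Step 2 (Cattabriga estimate and H\"older control of $\mathbf{G}$).} By the classical Cattabriga regularity for the stationary Stokes problem, for every $s\in(1,\infty)$,
$$
\|\uu\|_{\mathbf{W}^{2,s}(\Omega)} + \|q\|_{W^{1,s}(\Omega)} \;\leq\; C_s\,\|\mathbf{G}\|_{\mathbf{L}^{s}(\Omega)}.
$$
The first term of $\mathbf{G}$ lies in $\mathbf{L}^{p}\hookrightarrow \mathbf{L}^{p'}$ since $\Omega$ is bounded and $p\geq p'$. By H\"older with $\tfrac{1}{p'}=\tfrac{1}{p}+\tfrac{1}{r}$,
$$
\|\nabla\vp\cdot D\uu\|_{\mathbf{L}^{p'}} \leq \|\nabla\vp\|_{\mathbf{L}^{r}}\,\|D\uu\|_{\mathbf{L}^{p}},
\qquad
\|\pi\,\nabla\vp\|_{\mathbf{L}^{p'}} \leq \|\nabla\vp\|_{\mathbf{L}^{r}}\,\|\pi\|_{L^{p}}.
$$
So the desired bound follows once one controls $\|D\uu\|_{\mathbf{L}^{p}}$ and $\|\pi\|_{L^{p}}$ by $\|\f\|_{\mathbf{L}^p}+\|\nabla\uu\|$.

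\textbf{Step 3 (Bootstrap in $d=2$).} A priori, $\nabla\uu\in\mathbf{L}^2$ and, by the De Rham/standard Stokes theory applied to the weak formulation, $\pi\in L^2$ with $\|\pi\|\leq C(\|\f\|_{\V_\sigma'}+\|\nabla\uu\|)$. Invoking Step 2 with $s_0:=\min\{p',\,2r/(r+2)\}$ (so that $\mathbf{G}\in\mathbf{L}^{s_0}$ by H\"older with the exponent $2$ on $D\uu$ and $\pi$) yields $\uu\in\mathbf{W}^{2,s_0}$. In dimension two the Sobolev embedding $\mathbf{W}^{2,s_0}\hookrightarrow \mathbf{W}^{1,s_0^\ast}$ with $\tfrac{1}{s_0^\ast}=\max\{0,\tfrac{1}{s_0}-\tfrac12\}$ raises the integrability of $D\uu$, and the companion $W^{1,s_0}$ bound on $q$ upgrades $\pi$. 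Re-inserting these improved exponents in Step 2 and iterating a \emph{finite} number of times, the integrability of $\mathbf{G}$ increases monotonically and eventually reaches $p'$, giving $\uu\in\mathbf{W}^{2,p'}$ with the target bound \eqref{regstokes}.

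\textbf{Main obstacle.} The delicate point is bookkeeping of the constants throughout the bootstrap: one must ensure that the final estimate depends \emph{linearly} on $1+\|\nabla\vp\|_{\mathbf{L}^r}$ (so that the powers of $\|\nabla\vp\|_{\mathbf{L}^r}$ accumulated at intermediate steps are absorbed thanks to the prescribed structure $\tfrac{1}{p'}=\tfrac{1}{p}+\tfrac{1}{r}$), and that the starting $\mathbf{L}^2$ norm $\|\nabla\uu\|$ propagates cleanly to the prefactor $\|\f\|_{\mathbf{L}^p}+\|\nabla\uu\|$. The restriction $p'>1$ is essential because Cattabriga's estimate degenerates at $s=1$; the iteration must terminate before the H\"older product would force the dual exponent below one.
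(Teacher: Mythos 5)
The paper itself contains no proof of Theorem \ref{STOKESVISCREG}: the result is quoted from \cite[Sec. 4, Lemma 4]{A}. Your reduction --- recover $\pi$ by De Rham, divide by $\nu(\vp)$ to recast \eqref{STOKESVISC} as a constant-coefficient Stokes system whose right-hand side carries the commutator terms $\nabla\vp\cdot D\uu$ and $\pi\,\nabla\vp$, then invoke Cattabriga's $\mathbf{W}^{2,s}$ theory --- is exactly the standard route to this lemma, and Steps 1 and 2 are sound (modulo the routine justifications that the pointwise identity holds distributionally and that the weak solution coincides with the regular Cattabriga solution). The bootstrap of Step 3 also terminates, since each pass lowers $1/s$ by $\tfrac12-\tfrac1r>0$, so you do obtain the \emph{qualitative} conclusion $\uu\in\mathbf{W}^{2,p'}(\Omega)$. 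Moreover, when $p\le 2$ no bootstrap is needed: then $\|D\uu\|_{\mathbf{L}^p(\Omega)}+\|\pi\|_{\mathbf{L}^p(\Omega)}\le C\big(\|\nabla\uu\|+\|\pi\|\big)\le C\big(\|\f\|_{\mathbf{L}^p(\Omega)}+\|\nabla\uu\|\big)$, a single application of Cattabriga closes the argument, and the constant has exactly the stated form; this is in fact the only regime in which the paper uses the theorem (both applications in the proof of Theorem \ref{separazione} have $r=\infty$ and $p\le 2$).

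The genuine gap is the asserted \emph{linear} dependence of the constant on $1+\|\nabla\vp\|_{\mathbf{L}^r(\Omega)}$ in the case $p>2$, i.e. precisely when the bootstrap is actually needed. Set $M=\|\nabla\vp\|_{\mathbf{L}^r(\Omega)}$, $N=\|\f\|_{\mathbf{L}^p(\Omega)}+\|\nabla\uu\|$, and let $A_k$ bound $\|\uu\|_{\mathbf{W}^{2,s_k}(\Omega)}+\|q\|_{W^{1,s_k}(\Omega)}$ at stage $k$. Each pass through Step 2 yields the recursion $A_{k+1}\le CN+CM\,A_k$, so after $K$ iterations (with $K$ of order $(\tfrac12-\tfrac1p)/(\tfrac12-\tfrac1r)$, hence arbitrarily large as $p\to\infty$ or $r\downarrow 2$) the scheme produces a bound of order $(1+M)^{K}N$: a polynomial of degree $K$ in $M$, not degree one. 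Your ``main obstacle'' paragraph names this problem but only asserts its resolution; the natural fix does not work either, since interpolating $\|D\uu\|_{\mathbf{L}^p}$ between $\|D\uu\|$ and a norm controlled by $\|\uu\|_{\mathbf{W}^{2,p'}}$ and absorbing via Young's inequality produces a strictly superlinear power of $M$ (and fails altogether once $p'\ge 2$). The exponent relation $\tfrac1{p'}=\tfrac1p+\tfrac1r$ does produce linearity, but through a different mechanism than global bookkeeping: for instance, cover $\Omega$ by $J\le C(1+M/\varepsilon_0)^{r}$ balls on which the local norm $\|\nabla\vp\|_{\mathbf{L}^r(B)}\le\varepsilon_0$ is small, prove local $\mathbf{W}^{2,p'}$ estimates with $M$-independent constants by absorbing the commutator terms there, and recombine; the H\"older loss in summing the $p'$-th powers of the local $\mathbf{L}^p$ norms of $\f$ is $J^{1/p'-1/p}=J^{1/r}\le C(1+M)$, which is exactly the stated linear factor. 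Without an argument of this kind (or a restriction to $p\le2$), the estimate \eqref{regstokes} as stated remains unproven.
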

 
\section{A Product Estimate in Two Dimensions}
\label{Log-est}
\setcounter{equation}{0}

\noindent
We report here a logarithmic estimate of the product of two functions in two dimensions. The following proof is based on an idea developed in  \cite{DFJ2005} and \cite{Titi1986} to control the convective term of the Navier-Stokes equations.

\begin{proposition}
\label{result1}
Let $\Omega$ be a bounded domain in $\mathbb{R}^2$ with smooth boundary. Assume that $f\in V$ and $g\in V$. Then, there exists a positive constant $C$ such that
\begin{equation}
\label{logprod1}
\| f g\|\leq C \| f\|_{V}
\| g\|  \Big[ \log \Big( \mathrm{e} \frac{\| g\|_{V}}{\| g\|} \Big) \Big]^\frac12.
\end{equation}
\end{proposition}

\begin{proof}
Let us consider the operator $A_1=-\Delta + I$ on $H$ with domain $D(A_1)=\lbrace u\in H^2(\Omega): \partial_{\mathbf{n}}u=0$ on $\partial \Omega\rbrace$ defined in Appendix \ref{Neumann-Laplace}. By the spectral theory, there exists a sequence of positive eigenvalues $\lambda_k$ ($k\in \mathbb{N}$) associated with $A_1$ such that $\lambda_1=1$, $\lambda_{k}\leq \lambda_{k+1}$ and $\lambda_{k}\rightarrow \infty$ as $k$ goes to $\infty$. The sequence of eigenfunctions $w_k\in D(A_1)$ such that $A_1 w_k=\lambda_k w_k$ forms an orthonormal basis in $L^2(\Omega)$ and an orthogonal basis in $H^1(\Omega)$. In particular, we have the representation
$$
f= \sum_{k=1}^\infty (f,w_k) w_k.
$$
Let us fix $N \in \mathbb{N}$ whose value will be chosen later. We write $f$ as follows
\begin{equation}
\label{decompsition}
f=\sum_{n=0}^N f_n + f_N^{\bot},
\end{equation}
where
$$
f_n=\sum_{k: \mathrm{e}^n\leq \sqrt{\lambda_k}<\mathrm{e}^{n+1}} 
(f,w_k) w_k, \quad f_{N}^{\bot}= \sum_{k:\sqrt{\lambda_k} \geq \mathrm{e}^{N+1}} (f,w_k) w_k.
$$
By using the above decomposition, the H\"{o}lder inequality and subsequently \eqref{LADY} and \eqref{Agmon2d},  we find
\begin{align}
\| f g\| &\leq 
\sum_{n=0}^N \| f_n g \|+
\| f_{N}^{\bot} g \| \notag \\
& \leq \sum_{n=0}^N \| f_n\|_{L^\infty(\Omega)} \| g\| 
+ \| f_N^{\bot}\|_{L^4(\Omega)}
\| g\|_{L^4(\Omega)} \notag \\
&\leq C \sum_{n=0}^N \| f_n\|^\frac12
\| f_n\|_{H^2(\Omega)}^\frac12 \| g\|+ C \| f_N^{\bot}\|^\frac12
\| f_N^{\bot}\|_{V}^\frac12
\| g\|^\frac12 \| g\|_{V}^\frac12.\label{est1}
\end{align}
We now observe that
$$
\| f_n\|_{L^2(\Omega)}^2= \sum_{k: \mathrm{e}^n\leq \sqrt{\lambda_k}<\mathrm{e}^{n+1}} |(f,w_k)|^2 \leq \frac{1}{\mathrm{e}^{2n}} \sum_{k: \mathrm{e}^n\leq \sqrt{\lambda_k}<\mathrm{e}^{n+1}} \lambda_k |(f,w_k)|^2 = \frac{1}{\mathrm{e}^{2n}} \| f_n\|_{H^1(\Omega)}^2.
$$
Here we have used the fact that $D(A_1^\frac12)=V$. 
Observing that $f_n$ is a finite sum of $w_k$'s, by the regularity theory of the Neumann problem, we have
\begin{align*}
\| f_n\|_{H^2(\Omega)}^2 &\leq C \| A_1 f_n\|_{L^2(\Omega)}^2= C \sum_{k: \mathrm{e}^n\leq \sqrt{\lambda_k}<\mathrm{e}^{n+1}} \lambda_k^2 |(f,w_k)|^2\\
&\leq C  \sum_{k: \mathrm{e}^n\leq \sqrt{\lambda_k}<\mathrm{e}^{n+1}} \mathrm{e}^{2(n+1)} \lambda_k |(f,w_k)|^2\\
&\leq C \mathrm{e}^{2(n+1)} \| f_n\|_{V}^2.
\end{align*}
Then, we deduce that 
$$
\| f_n\|^\frac12
\| f_n\|_{H^2(\Omega)}^\frac12 \leq C \mathrm{e}^\frac12 \| f_n\|_{V}.
$$
On the other hand, reasoning as above, we have
$$
\| f_{N}^{\bot}\|_{L^2(\Omega)}^2\leq \frac{1}{\mathrm{e}^{2(N+1)}} \| f_{N}^{\bot}\|_{V}^2.
$$
Combining the above inequalities in \eqref{est1}, and applying the Cauchy-Schwarz inequality, we get
\begin{align}
\| fg\| &\leq 
C \sum_{n=0}^N \mathrm{e}^\frac12 \| f_n\|_{V} \| g\| + 
C \frac{1}{\mathrm{e}^{\frac{N+1}{2}}} \| f_N^{\bot}\|_{V} \| g\|^\frac12 \| g\|_{V}^\frac12 \notag\\
&\leq C  \| g\|
\Bigg(  \sum_{n=0}^N \mathrm{e}^\frac12 \| f_n\|_V + \frac{1}{\mathrm{e}^{\frac{N+1}{2}}}
\frac{\| g\|_V^\frac12}{\| g\|^\frac12} \| f_N^{\bot}\|_V\Bigg) \notag\\
&\leq C \| g\|
\Bigg(   \mathrm{e} (N+1) + \frac{1}{\mathrm{e}^{N+1}}
\frac{\| g\|_{V}}{\| g\| } \Bigg)^\frac12  \Bigg( \sum_{n=0}^N 
\| f_n\|_{V}^2  +\| f_N^{\bot}\|^2_V
\Bigg)^\frac12 \notag\\
&\leq  C \| g\|
\Bigg(   \mathrm{e} (N+1) + \frac{1}{\mathrm{e}^{N+1}}
\frac{\| g\|_V}{\| g\|} \Bigg)^\frac12 \| f\|_{V}. \label{est2}
\end{align}
Now, we choose the integer $N$ so that
$$
\ln \Big( \mathrm{e} \frac{\| g\|_V}{\| g\|}\Big) \leq N+1 < 1+ \ln \Big(\mathrm{e} \frac{\| g\|_V}{\| g\|}\Big)
$$
By using the above choice of $N$ in \eqref{est2}, we eventually infer that
$$
\| f g\| \leq 
C\| g\| \| f\|_V
\Big[  \mathrm{e} \ln \Big( \mathrm{e}^2 \frac{\| g\|_{V}}{\| g\|}\Big) +\frac{1}{\mathrm{e}} \Big]^\frac12 
,
$$
which implies the desired conclusion.
\end{proof}

\noindent
For the purpose of this work we state an immediate generalization of \eqref{logprod1}, whose proof which can be inferred from that of Proposition \eqref{result1} is left to the interested reader. 

\begin{proposition}
\label{result2}
Let $\Omega$ be a bounded domain in $\mathbb{R}^2$ with smooth boundary. Assume that $f\in V$, $\g\in \V$ and $h\in V$. Then, there exists a positive constant $C$ such that
\begin{equation}
\label{logprod2}
\| f \g\| \leq C \| f\|_{V}
\Big(\| \g\| +\| h\| \Big) \Big[ \log \Big( \mathrm{e} \frac{\| \g\|_{\V}+\|h\|_V}{\| \g\|+\| h\|} \Big) \Big]^\frac12.
\end{equation}
\end{proposition}
 
\bigskip
\noindent 
\textbf{Acknowledgments.} 
The authors wish to thank Yining Cao for her careful reading of the manuscript and her helpful remarks. 
Part of this work was done while the first author was visiting the Laboratoire de Math\'{e}matiques et Applications at the Universit\'{e} de Poitiers whose hospitality is gratefully acknowledged.
The research experience was supported by 
LIA-LYSM: AMU-CNRS.ECM-INdAM funding. 
The work of A. Giorgini was also supported by GNAMPA-INdAM Project
 ``Analisi matematica di modelli a interfaccia diffusa per fluidi complessi" and by the project Fondazione Cariplo-Regione Lombardia MEGAsTAR ``Matematica d'Eccellenza in biologia ed ingegneria come acceleratore di una nuova strateGia per l'ATtRattivit\`{a} dell'ateneo pavese".
The work of R. Temam was partially supported by the National Science Foundation [grant number NSF-DMS-1510249] and by the Research Fund of Indiana University.

\end{document}